\newcommand{\paperfont}{\fontsize{11pt}{1.3\baselineskip}\selectfont}
\begin{document}

% 生成目录
%\tableofcontents

%%%%%%%%%% 定理类环境的定义 %%%%%%%%%%
\theoremstyle{definition}
\makeatletter
\thm@headfont{\bf}
\makeatother
\newtheorem{definition}{Definition}
\newtheorem{example}{Example}
\newtheorem{theorem}{Theorem}
\newtheorem{lemma}{Lemma}
\newtheorem{corollary}{Corollary}
\newtheorem{remark}{Remark}

%%%%%%%%%% 页眉和页脚的设置 %%%%%%%%%%
\lhead{}
\rhead{}
\lfoot{}
\rfoot{}

%%%%%%%%%% 一些重定义 %%%%%%%%%%
\renewcommand{\refname}{References}
\renewcommand{\figurename}{Figure}
\renewcommand{\tablename}{Table}
\renewcommand{\proofname}{Proof}

%%%%%%%%%% 论文标题、作者等 %%%%%%%%%%
\title{\textbf{Cycle symmetries and circulation fluctuations for discrete-time and continuous-time Markov chains}}
\author{Chen Jia$^{1,2}$,\;\;\;Daquan Jiang$^{1,3}$,\;\;\;Minping Qian$^{1}$ \\
\footnotesize $^1$LMAM, School of Mathematical Sciences, Peking University, Beijing 100871, P.R. China\\
\footnotesize $^2$Beijing International Center for Mathematical Research, Beijing 100871, P.R. China\\
\footnotesize $^3$Center for Statistical Science, Peking University, Beijing 100871, P.R. China\\}
\date{}                              % 日期
\maketitle                           % 生成标题
%\tableofcontents                    % 插入目录
\thispagestyle{empty}                % 首页无页眉页脚

%%%%%%%%%% 正式使用字体 %%%%%%%%%%%
\paperfont

%%%%%%%%%% 摘要 %%%%%%%%%%
\begin{abstract}
In probability theory, equalities are much less than inequalities. In this paper, we find a series of equalities which characterize the symmetry of the forming times of a family of similar cycles for discrete-time and continuous-time Markov chains. Moreover, we use these cycle symmetries to study the circulation fluctuations for Markov chains. We prove that the empirical circulations of a family of cycles passing through a common state satisfy a large deviation principle with a rate function which has an highly non-obvious symmetry. Finally, we discuss the applications of our work in statistical physics and biochemistry. \\

\noindent % 不缩进
\textbf{Keywords}: Haldane equality, current fluctuations, fluctuation theorems, large deviations, nonequilibrium \\
\textbf{Classifications}: 60J10, 60J20, 60J27, 60J28, 60F10
\end{abstract}

%%%%%%%%%% 正文 %%%%%%%%%%
\section{Introduction}\label{introduction}
Markov chains are widely used to model various stochastic systems in physics, chemistry, biology, and engineering. The trajectory of a Markov chain constantly forms various kinds of cycles. The cycle representation theory of Markov chains \cite{qian1979decomposition, qian1981markov, minping1982circulation, qian1982circulation, qian1984circulations, kalpazidou1990asymptotic, qian1991reversibility} not only possesses rich theoretical contents, but has become a fundamental tool in dealing with nonequilibrium systems in natural sciences as well. We refer to two books \cite{jiang2004mathematical, kalpazidou2007cycle} for the theoretical contents of the cycle representation theory and refer to two papers \cite{zhang2012stochastic, ge2012stochastic} for the applications of the cycle representation theory in physics, chemistry, and biology.

The earliest theoretical result of the cycle representation theory is probably the Kolmogorov's criterion for reversibility \cite{kolmogoroff1936theorie}, which claims that a stationary Markov chain is reversible if and only if the product of transition probabilities (rates) along each cycle $c$ and that along its reversed cycle $c-$ are the same. Illuminated by the diagram method \cite{hill2012free, hill2013free} developed by Hill in his study of cycle kinetics in biochemical systems, the Qians' \cite{qian1979decomposition, qian1981markov, minping1982circulation, qian1982circulation, qian1984circulations} and Kalpazidou \cite{kalpazidou1990asymptotic, kalpazidou2007cycle} introduced the important concept of circulations for Markov chains and further enriched the cycle representation theory. Let $N^c_t$ denote the number of cycle $c$ formed by a Markov chain up to time $t$. The circulation $J^c$ of cycle $c$ is a nonnegative real number defined as the following almost sure limit:
\begin{equation}\label{circulation}
J^c = \lim_{t\rightarrow\infty}\frac{1}{t}N^c_t,\;\;\;\textrm{a.s.},
\end{equation}
which represents the number of cycle $c$ formed per unit time. It turns out that a stationary Markov chain is reversible if and only if the circulations of each cycle $c$ and its reversed cycle $c-$ are the same. This explains why the cycle representation theory is naturally related to the nonequilibrium (irreversible) phenomena in natural sciences.

Recently, biophysicists have applied the cycle representation theory to study single-molecule enzyme kinetics and found an interesting relation named as the generalized Haldane equality \cite{qian2006generalized, ge2008waiting, ge2012stochastic, ge2012multivariable}. Mathematically, each chemical reaction catalyzed by an enzyme can be modeled as a Markov chain with three states (see Section \ref{biochemistry}). Let $T^c$ be the forming time of cycle $c$, which is defined as the time required for the Markov chain to form cycle $c$ for the first time, and let $T^{c-}$ be the forming time of its reversed cycle $c-$. Qian and Xie \cite{qian2006generalized} and Ge \cite{ge2008waiting} proved that for three-state Markov chains, although the distributions of $T^c$ and $T^{c-}$ can be different, their distributions, conditional on the corresponding cycle is formed early than its reversed cycle, are the same:
\begin{equation}\label{Haldaneprep}
P(T^c\leq t|T^c<T^{c-}) = P(T^{c-}\leq t|T^{c-}<T^c).
\end{equation}
This equality, which characterizes the symmetry of the forming times of a cycle and its reversed cycle, is named as the generalized Haldane equality since it generalizes of what is known as the Haldane relation
for reversible enzyme kinetics \cite{qian2006generalized}.

Now that the generalized Haldane equality holds for three-state Markov chains, it is natural to ask whether it holds for general Markov chains. If a Markov chain has only three states, then it has only two ``effective cycles" (clockwise and counterclockwise cycles) and the generalized Haldane equality can be proved using the method of quasi-time reversal \cite{ge2008waiting, ge2012multivariable}. However, this method depends too much on the cyclic topology of three-state Markov chains and cannot be generalized to general Markov chains with a large number of ``effective cycles".

In this paper, we establish some deep properties of taboo probabilities and use them to prove the generalized Haldane equality for general discrete-time and continuous-time Markov chains with denumerable state space. We find that the generalized Haldane equality not only holds for a cycle and its reversed cycle, but also holds for a family of similar cycles, which are defined as cycles passing through the same set of states (see Definition \ref{similar}). Let $c_1,c_2,\cdots,c_r$ be a family of similar cycles, let $T_{c_1},T_{c_2},\cdots,T_{c_r}$ be their forming times, and let $T = \min\{T_{c_1},T_{c_2},\cdots,T_{c_r}\}$. In this paper, we prove that although the distributions of $T_{c_1},T_{c_2},\cdots,T_{c_r}$ can be different, their distributions, conditional on the corresponding cycle is formed earlier than any other similar cycles, are the same:
\begin{equation}
P(T_{c_1}\leq t|T=T_{c_1}) = P(T_{c_2}\leq t|T=T_{c_2}) = \cdots = P(T_{c_m}\leq t|T=T_{c_m}).
\end{equation}
This equality also shows that the forming time $T$ of two or more similar cycles is independent of which one of these cycles is formed (see Corollary \ref{independent1} and Remark \ref{independentremark}), which is another important aspect of the generalized Haldane equality. The generalized Haldane equality has many variations which are closely related. These results, which include Theorems 1-4 and Corollaries 1-6, will be collectively referred to as the generalized Haldane ``equalities" in this paper.

The generalized Haldane equalities established in this paper have wide applications. One of the most important applications of the generalized Haldane equalities is to study the circulation fluctuations for Markov chains. In recent two decade, the studies about the fluctuations for stochastic systems have become a central topic in nonequilibrium statistical physics \cite{seifert2012stochastic}. Motivated by the results of numerical simulations \cite{evans1993probability}, Gallavotti and Cohen \cite{gallavotti1995dynamical} gave the first mathematical presentation of the fluctuation theorem for a class of stationary nonequilibrium systems. They proved that under suitable assumptions, the probability distribution of the phase space contraction averaged along the trajectory satisfies a large deviation principle with a rate function which has a highly non-obvious symmetry. Since then there has been a large amount of literature exploring various kinds of generalizations of the fluctuation theorem. In recent years, physicists become increasingly concerned about the fluctuations of circulations for Markov chains \cite{seifert2012stochastic}, since the entropy production, as a central concept in nonequilibrium statistical physics, can be decomposed into different cycles where the circulations emerge naturally (see Section \ref{statisticalphysics}). The entropy production fluctuations have been studied thoroughly \cite{lebowitz1999gallavotti, jiang2003entropy, seifert2005entropy}. However, the circulation fluctuations for general Markov chains remain poorly understood up till now.

Surprisingly, the generalized Haldane equalities established in this paper can be used to study the circulation fluctuations for Markov chains. The empirical circulation $J^c_t$ of cycle $c$ is defined as
\begin{equation}
J^c_t = \frac{1}{t}N^c_t.
\end{equation}
It is easy to see that the circulations defined in \eqref{circulation} are the almost sure limits of the empirical circulations. In this paper, we prove that the empirical circulations of a family of cycles $c_1,c_2,\cdots,c_r$ passing through a common state satisfy a large deviation principle with rate $t$ and good rate function $I^{c_1,c_2,\cdots,c_r}$. Moreover, we apply the generalized Haldane equalities to prove that the rate function $I^{c_1,c_2,\cdots,c_r}$ has the following highly non-obvious symmetry: if $c_k$ and $c_l$ are similar, then
\begin{equation}
\begin{split}
& I^{c_1,c_2,\cdots,c_r}(x_1,\cdots,x_k,\cdots,x_l,\cdots,x_r) \\
&= I^{c_1,c_2,\cdots,c_r}(x_1,\cdots,x_l,\cdots,x_k,\cdots,x_r)-
\left(\log\frac{\gamma^{c_k}}{\gamma^{c_l}}\right)(x_k-x_l),
\end{split}
\end{equation}
where $\gamma^{c_k}$ and $\gamma^{c_l}$ are the strengths of $c_k$ and $c_l$, respectively (see Definition \ref{strength}). In applications, we are more concerned about the fluctuations of net circulations, where the empirical net circulation $K^c_t$ of cycle $c$ is defined as
\begin{equation}
K^c_t = J^c_t-J^{c-}_t.
\end{equation}
In this paper, we prove that the empirical net circulations of cycles $c_1,c_2,\cdots,c_r$ also satisfy a large deviation principle with rate $t$ and good rate function $I_K^{c_1,c_2,\cdots,c_r}$ which has the following symmetry:
\begin{equation}
I_K^{c_1,c_2,\cdots,c_r}(x_1,\cdots,x_k,\cdots,x_r) = I^{c_1,c_2,\cdots,c_r}(x_1,\cdots,-x_k,\cdots,x_r)-
\left(\log\frac{\gamma^{c_k}}{\gamma^{c_k-}}\right)x_k.
\end{equation}
This is actually the Gallavotti-Cohen-type fluctuation theorem of net circulations. During the proof of the above results, we also obtain other types of fluctuation theorems as by-products, including the transient fluctuation theorem, the integral fluctuation theorem, and the Lebowitz-Spohn-type fluctuation theorem. All these fluctuation theorems, together with the generalized Haldane equalities, characterize the symmetries of a family of similar cycles for Markov chains from different aspects.

At the end of this paper, we discuss the applications of our work in nonequilibrium statistical physics and biochemistry. This shows that our work would have a board application prospect in natural sciences.

\section{Rigorous definitions of cycles and their forming times}
In this section, we shall give the rigorous definitions of cycles and their forming times for discrete-time and continuous-time Markov chains.

We first give the definitions of cycles. Let $X=(X_t)_{t\geq 0}$ be a time-homogeneous discrete-time or continuous-time Markov chain with denumerable state space $\mathbb{S}$ defined on some probability space $(\Omega,\mathscr{F},P)$.

\begin{definition}\label{circuit}
Let $i_1\rightarrow i_2\rightarrow\cdots\rightarrow i_s\rightarrow i_1$ and $j_1\rightarrow j_2\rightarrow\cdots\rightarrow j_r\rightarrow j_1$ be two directed circuits on complete graph with vertex set $\mathbb{S}$. Then the two directed circuits are called equivalent if $r=s$ and if there exists $1\leq k\leq s$, such that $i_{k+1}=j_1,i_{k+2}=j_2,\cdots,i_{k+s}=j_s$, where we have used the convention that $i_{s+l}=i_l$ for each integer $l$.
\end{definition}

According to the above definition, two directed circuits are called equivalent if one can be transformed into the other by a cyclic permutation. For example, the three directed circuits, $1\rightarrow2\rightarrow3\rightarrow1$, $2\rightarrow3\rightarrow1\rightarrow2$, and $3\rightarrow1\rightarrow2\rightarrow3$, are equivalent.

\begin{definition}
Let $i_1,i_2,\cdots,i_s$ be distinct states in $\mathbb{S}$. Then the equivalence class of the directed circuit $i_1\rightarrow i_2\rightarrow\cdots\rightarrow i_s\rightarrow i_1$ under the equivalence relation described in Definition \ref{circuit} is called a cycle and is denoted by $(i_1,i_2,\cdots,i_s)$.
\end{definition}

According to the above definition, two cycles are the same if one can be transformed into the other by a cyclic permutation. For example, the three cycles, $(1,2,3)$, $(2,3,1)$, and $(3,1,2)$, represent the same cycle.

We next give the definition of the forming times of cycles for discrete-time Markov chains. Let $X=(X_n)_{n\geq 0}$ be an irreducible and recurrent discrete-time Markov chain with denumerable state space $\mathbb{S}$ and transition probability matrix $P = (p_{ij})_{i,j\in\mathbb{S}}$.

To this end, we must introduce the concept of the derived chain. It can be proved that with probability one, the trajectory of $X$ will generate an infinite sequence of cycles \cite{jiang2004mathematical}. If we discard the cycles formed by $X$ and keep track of the remaining states in the trajectory, then we obtain a new Markov chain $Y$ called the derived chain. We shall give the rigorous definitions of the derived chain later, but the basic ideas should be clear from the following example.

\begin{example}
If the trajectory of the Markov chain $X$ is $\{1,2,3,2,4,5,2,3,1,\cdots\}$, then the corresponding trajectory of the derived chain $Y$ and the cycles formed are as follows:
\begin{table}[!htb]
\centering
\begin{tabular}{|c|c|c|c|c|c|c|c|c|c|}
  \hline
  $n$             & 0   & 1     & 2       & 3     & 4       & 5         & 6       & 7       & 8       \\
  $X_n$           & 1   & 2     & 3       & 2     & 4       & 5         & 2       & 3       & 1       \\
  $Y_n$           & [1] & [1,2] & [1,2,3] & [1,2] & [1,2,4] & [1,2,4,5] & [1,2]   & [1,2,3] & [1]     \\
  cycles formed   &     &       &         & (2,3) &         &           & (2,4,5) &         & (1,2,3) \\
  \hline
\end{tabular}
\end{table}
\end{example}

In order to give the rigorous definitions of the derived chain, we introduce several notations. We denote an finite sequence $i_1,i_2,\cdots,i_s$ of distinct states by $[i_1,i_2,\cdots,i_s]$ and denote the collection of all finite sequences of distinct states by $[\mathbb{S}]$, that is,
\begin{equation}
[\mathbb{S}] = \{[i_1,i_2,\cdots,i_s]: \textrm{$s\geq 1$, $i_1,\cdots,i_s$ are distinct states in $\mathbb{S}$}\}.
\end{equation}
We also define a map $\{\cdot,\cdot\}$ from $[\mathbb{S}]\times \mathbb{S}$ into $[\mathbb{S}]$ by
\begin{equation}
\{[i_1,i_2,\cdots,i_s],i\} =
\begin{cases}
[i_1,i_2,\cdots,i_s,i],~~~\textrm{if}~i\notin\{i_1,i_2,\cdots,i_s\} \\
[i_1,i_2,\cdots,i_k],~~~\textrm{if}~i=i_k~\textrm{for some}~1\leq k\leq s.
\end{cases}
\end{equation}

\begin{definition}
The derived chain $Y=(Y_n)_{n\geq 0}$ of $X$ is defined as $Y_0 = [X_0]$ and $Y_n = \{Y_{n-1},X_n\}$ for each $n\geq 1$.
\end{definition}

It can be proved that the derived chain $Y$ is a time-homogeneous Markov chain with denumerable state space $[\mathbb{S}]$ \cite{jiang2004mathematical}.

\begin{definition}
Let $c=(i_1,i_2,\cdots,i_s)$ be a cycle. For each $\omega\in\Omega$, we say that the trajectory $X(\omega)$ forms cycle $c$ at time $n$ if there exists $1\leq k\leq s$ and distinct states $j_1,j_2,\cdots,j_r\notin\{i_1,i_2,\cdots,i_s\}$ such that $Y_{n-1}(\omega) = [j_1,j_2,\cdots,j_r,i_k,i_{k+1},\cdots,i_{k+s-1}]$ and $Y_n(\omega) = [j_1,j_2,\cdots,j_r,i_k]$, where we have used the convention that $i_{s+l}=i_l$ for each integer $l$.
\end{definition}

\begin{definition}
Let $c$ be a cycle. Then the forming time $T^c$ of cycle $c$ by $X$ is defined as
\begin{equation}
T^c(\omega) = \inf\{n\geq 1: \textrm{the trajectory $X(\omega)$ forms cycle $c$ at time $n$}\}.
\end{equation}
For each $m\geq 1$, the $m$-th forming time $T^c_m$ of cycle $c$ can be defined inductively as the forming time of cycle $c$ by the Markov chain $(X_{T^c_{m-1}+n})_{n\geq 0}$, where $T^c_0$ is understood as $0$.
\end{definition}

\begin{lemma}\label{finiteness1}
Let $c=(i_1,i_2,\cdots,i_s)$ be a cycle. \\
(i) If $p_{i_1i_2}p_{i_2i_3}\cdots p_{i_si_1}>0$, then $T^c_m<\infty$ almost surely for each $m\geq 1$.\\
(ii) If $p_{i_1i_2}p_{i_2i_3}\cdots p_{i_si_1}=0$, then $T^c_m=\infty$ almost surely for each $m\geq 1$.
\end{lemma}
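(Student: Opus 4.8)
The plan is to treat the two parts separately: prove the easy direction (ii) by analysing what a cycle formation demands of the trajectory, and prove (i) by exhibiting, infinitely often, a genuine positive-probability opportunity to form the cycle.

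For part (ii) I would first record the elementary fact that whenever two states $u,v$ appear consecutively (in that order) in the derived chain at some time, the transition $u\rightarrow v$ must actually have occurred, so $p_{uv}>0$; this is immediate from the definition of the map $\{\cdot,\cdot\}$, since a new vertex is appended to the current sequence only at the instant $X$ jumps into it, and a vertex's predecessor never changes once it is in the path. Consequently, if $X$ forms $c=(i_1,\dots,i_s)$ at some time $n$, then from the defining condition $Y_{n-1}=[j_1,\dots,j_r,i_k,\dots,i_{k+s-1}]$, $Y_n=[j_1,\dots,j_r,i_k]$, every cyclic edge $i_m\rightarrow i_{m+1}$ has been traversed: the interior edges $(i_k,i_{k+1}),\dots,(i_{k+s-2},i_{k+s-1})$ because they appear consecutively in $Y_{n-1}$, and the closing edge $i_{k-1}\rightarrow i_k$ because it is exactly the jump $X_{n-1}\rightarrow X_n$. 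Hence $p_{i_1i_2}\cdots p_{i_si_1}>0$ is necessary for $c$ to be formable at all, and its vanishing forces $T^c_m=\infty$ for every $m$, which is (ii).

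For part (i) I would reduce the general $m$ to the case $m=1$. Since forming times are stopping times, the strong Markov property shows that on $\{T^c_{m-1}<\infty\}$ the restarted process $(X_{T^c_{m-1}+n})_{n\ge 0}$ is again an irreducible recurrent chain with matrix $P$, so $T^c_m<\infty$ a.s.\ follows from the $m=1$ statement applied to this restarted chain, and an induction closes the argument. The core is therefore $T^c<\infty$ a.s.\ for an arbitrary irreducible recurrent chain. The decisive structural observation, both facts read directly off the truncation rule, is that the first coordinate (root) of $Y_n$ is always $X_0$, and that each return of $X$ to $X_0$ resets the derived chain to the singleton $[X_0]$. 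Writing $0=\sigma_0<\sigma_1<\cdots$ for the successive return times to $X_0$ (infinitely many, by recurrence), the strong Markov property makes the excursions between consecutive $\sigma_j$ i.i.d., and at the start of each excursion the derived chain sits in the identical fresh state $[X_0]$, so the event ``this excursion forms $c$'' is a function of the excursion path alone and hence i.i.d.\ across $j$.

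It then suffices to show that a single excursion forms $c$ with some fixed positive probability $\beta$, since the second Borel--Cantelli lemma for independent events would then yield that infinitely many excursions form $c$, giving $T^c<\infty$ a.s. Proving $\beta>0$ is the main obstacle, precisely because loop erasure can short-circuit a naive traversal: if a cycle state other than the intended closing state already sits in the current derived chain, walking around $c$ truncates at the wrong place and produces a different cycle. I would circumvent this by choosing the closing state carefully. Let $i_{l^*}$ be the first cycle state encountered along some positive-probability path from $X_0$ to $i_1$ (if $X_0$ is itself a cycle state, take $i_{l^*}=X_0$ and the empty path). This path reaches $i_{l^*}$ while avoiding all cycle states in its interior, so after loop erasure the derived chain is $[X_0,w,i_{l^*}]$ with $w$ containing no cycle state; walking $i_{l^*}\rightarrow i_{l^*+1}\rightarrow\cdots\rightarrow i_{l^*-1}\rightarrow i_{l^*}$ then appends only fresh vertices and closes cleanly at $i_{l^*}$, forming exactly $c$. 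Prefixing the reset $[X_0]$ and suffixing any return path to $X_0$ exhibits a concrete excursion of positive probability that forms $c$, so $\beta>0$, which completes (i).
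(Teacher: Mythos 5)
Your proof is correct, and for part (i) it takes a genuinely different route from the paper's. The paper cuts the trajectory at the successive return times $\tau_m$ to $X_{\tau_0}$, the first state of $H=\{i_1,\dots,i_s\}$ that the chain visits, and asserts that conditional on $\{X_{\tau_0}=i_k\}$ the index of the first $\tau_m$ at which $c$ is formed is geometric with parameter at least $\gamma^c$, the implicit idea being that each excursion from $i_k$ performs the direct walk $i_k\rightarrow i_{k+1}\rightarrow\cdots\rightarrow i_k$ with probability $\gamma^c$ and that this walk closes exactly the cycle $c$. You instead cut at the successive returns to $X_0$ and isolate the two structural facts that make this work: the root of the derived chain is always $X_0$, and every return to $X_0$ resets the derived chain to $[X_0]$. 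This makes the events ``the $j$-th excursion forms $c$'' genuinely i.i.d., and you then exhibit one positive-probability excursion forming $c$ by entering $H$ for the first time at a well-chosen state $i_{l^*}$ and circling from there. Your decomposition buys real safety at exactly the delicate point: since each of your excursions starts from the clean state $[X_0]$, no stale cycle state can be sitting in the derived chain to make the circuit truncate at the wrong place. The paper's decomposition lacks this guarantee --- if an excursion erases $i_k$ from the derived chain (by revisiting $X_0$, say) and then re-enters $H$ at a different state before coming back to $i_k$, the derived chain at the next return time contains another state of $H$ in front of $i_k$; the direct walk around $c$ then closes a different cycle, and the claimed bound $p_k\geq\gamma^c$ is not justified as written. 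So your argument is not only valid but more careful than the paper's at its weakest step. The only detail worth polishing is to take the path from $X_0$ to $i_{l^*}$ to be self-avoiding (its own loop erasure works, since, as you observe in part (ii), consecutive entries of a derived chain always correspond to transitions that actually occurred), so that the prescribed word does not revisit $X_0$ and therefore sits inside a single excursion.
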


\begin{proof}
It is easy to see that (ii) holds. We next prove (i). To this end, we only need to prove that $T^c<\infty$ almost surely. Let $H=\{i_1,i_2,\cdots,i_s\}$. Let $\tau_0 = \inf\{n\geq 0: X_n\in H\}$ and let $\tau_m = \inf\{n>\tau_{m-1}: X_n = X_{\tau_0}\}$ for each $m\geq 1$. Since $X$ is recurrent, it is easy to see that $\tau_m<\infty$ almost surely for each $m$. Let
\begin{equation}
N(\omega) = \inf\{m\geq 1: \textrm{the trajectory $X(\omega)$ forms cycle $c$ at time $\tau_m$}\}.
\end{equation}
We now fix some $1\leq k\leq s$. By the strong Markov property, conditional on $\{X_{\tau_0}=i_k\}$, $N$ follows a geometric distribution with parameter $p_k\geq p_{i_1i_2}p_{i_2i_3}\cdots p_{i_si_1}>0$. This shows that $N<\infty$ almost surely conditional on $\{X_{\tau_0}=i_k\}$. By the arbitrariness of $k$, we obtain that $N<\infty$ almost surely. This implies that for almost every $\omega$, the trajectory $X(\omega)$ will form cycle $c$ in finite time, that is, $T^c<\infty$ almost surely.
\end{proof}

We finally give the definition of the forming times of cycles for continuous-time Markov chains. Let $X=(X_t)_{t\geq 0}$ be an irreducible and recurrent continuous-time Markov chain with denumerable state space $\mathbb{S}$ and transition rate matrix $Q = (q_{ij})_{i,j\in\mathbb{S}}$. Let $(J_n)_{n\geq 0}$ be the jump times of $X$ with the convention of $J_0 = 0$. For each $n\geq 0$, let $\bar{X}_n = X_{J_n}$. Then $\bar{X}=(\bar{X}_n)_{n\geq 0}$ is the embedded chain of $X$.

\begin{definition}\label{formingtime}
Let $c$ be a cycle. Let $\bar{T}^c$ be the forming time of cycle $c$ by the embedded chain $\bar{X}$. Then the forming time $T^c$ of cycle $c$ by $X$ is defined as
\begin{equation}
T^c = J_{\bar{T}^c}.
\end{equation}
For each $m\geq 1$, let $\bar{T}^c_m$ be the $m$-th forming time of cycle $c$ by the embedded chain $\bar{X}$. Then the $m$-th forming time $T^c_m$ of cycle $c$ by $X$ is defined as
\begin{equation}
T^c_m = J_{\bar{T}^c_m}.
\end{equation}
\end{definition}

\begin{lemma}\label{finiteness2}
Let $c=(i_1,i_2,\cdots,i_s)$ be a cycle. \\
(i) If $q_{i_1i_2}q_{i_2i_3}\cdots q_{i_si_1}>0$, then $T^c_m<\infty$ almost surely for each $m\geq 1$.\\
(ii) If $q_{i_1i_2}q_{i_2i_3}\cdots q_{i_si_1}=0$, then $T^c_m=\infty$ almost surely for each $m\geq 1$.
\end{lemma}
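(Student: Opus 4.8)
The plan is to reduce everything to the already-established discrete-time result, Lemma \ref{finiteness1}, applied to the embedded chain $\bar{X}$, and then to transfer the conclusion back to $X$ through the identity $T^c_m = J_{\bar{T}^c_m}$ of Definition \ref{formingtime}. Since Lemma \ref{finiteness1} is stated for every $m\geq 1$, applying it to $\bar{X}$ will directly dispose of all the $m$-th forming times at once, with no need to re-run a restart argument.

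First I would record the two structural facts about the embedded chain that make this reduction legitimate. Writing $q_i = -q_{ii} = \sum_{j\neq i}q_{ij}$ for the total exit rate of state $i$, the one-step transition probabilities of $\bar{X}$ are $\bar{p}_{ij} = q_{ij}/q_i$. Since $X$ is irreducible on a denumerable state space, no state is absorbing and every $q_i>0$; consequently $\bar{X}$ is again irreducible, and by the standard equivalence between recurrence of a continuous-time chain and recurrence of its jump chain, $\bar{X}$ is also recurrent, so Lemma \ref{finiteness1} genuinely applies to it. Moreover, because each $q_i$ is strictly positive, $\bar{p}_{ij}>0$ if and only if $q_{ij}>0$, and taking the product along the cycle gives
\begin{equation}
\bar{p}_{i_1i_2}\bar{p}_{i_2i_3}\cdots\bar{p}_{i_si_1}>0 \iff q_{i_1i_2}q_{i_2i_3}\cdots q_{i_si_1}>0,
\end{equation}
so the hypotheses of parts (i) and (ii) translate verbatim into those of Lemma \ref{finiteness1} for $\bar{X}$.

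With this in hand, Lemma \ref{finiteness1} applied to $\bar{X}$ yields the dichotomy at the level of the embedded forming times: in case (i), $\bar{T}^c_m<\infty$ almost surely for every $m$, while in case (ii), $\bar{T}^c_m=\infty$ almost surely for every $m$. It then remains to convert these into statements about $T^c_m = J_{\bar{T}^c_m}$. Case (i) is immediate: on $\{\bar{T}^c_m = n\}$ with $n<\infty$ one has $T^c_m = J_n = \sum_{k=1}^n (J_k - J_{k-1})$, a finite sum of holding times, each almost surely finite, whence $T^c_m<\infty$ almost surely. For case (ii) I would note that $\bar{T}^c_m=\infty$ forces $T^c_m = J_\infty$, the explosion time of $X$; since $X$ is recurrent it is in particular non-explosive, so $J_\infty=\infty$ almost surely and hence $T^c_m=\infty$ almost surely.

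The only genuinely delicate point is this final transfer, namely the behaviour of $J_{\bar{T}^c_m}$ when $\bar{T}^c_m$ is infinite; this is exactly where the recurrence hypothesis does real work, since without non-explosion $J_\infty$ could be finite and part (ii) could fail. The two ingredients I lean on here — the equivalence of recurrence for $X$ and for $\bar{X}$, and the non-explosion of recurrent chains (which follows because the chain returns to a fixed state infinitely often, accumulating an infinite sum of i.i.d.\ positive holding times) — are standard and I would cite them (e.g.\ via \cite{jiang2004mathematical}) rather than re-derive them in full.
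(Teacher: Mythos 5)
Your proof is correct and follows essentially the same route as the paper: reduce to Lemma \ref{finiteness1} via the embedded chain (using that irreducibility and recurrence pass to $\bar{X}$ and that $q_{i_1i_2}\cdots q_{i_si_1}>0$ iff the corresponding product of jump-chain probabilities is positive), then transfer back through $T^c_m=J_{\bar{T}^c_m}$ using non-explosiveness. Your treatment of part (ii) is in fact slightly more careful than the paper's, which dismisses it as easy, but the argument is the same in substance.
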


\begin{proof}
It is easy to see that (ii) holds. We next prove (i). Since $X$ is irreducible and recurrent, the embedded chain $\bar{X}$ is also irreducible and recurrent. By Lemma \ref{finiteness1}, we see that $\bar{T}^c_m<\infty$ almost surely. Since $X$ is irreducible and recurrent, it is non-explosive, which means that $J_n<\infty$ almost surely for each $n$. The above two facts show that $T^c_m = J_{\bar{T}^c_m} < \infty$ almost surely.
\end{proof}

We have defined the forming time of a particular cycle. We shall now define the forming time of two or more cycles.

\begin{definition}\label{formingtimefamily}
Let $c_1,c_2,\cdots,c_r$ be a family of cycles and let $T^{c_1},T^{c_2},\cdots,T^{c_r}$ be their forming times. Then the forming time $T$ of $c_1,c_2,\cdots,c_r$ by $X$ is defined as
\begin{equation}
T = \min\{T^{c_1},T^{c_2},\cdots,T^{c_r}\}.
\end{equation}
For each $m\geq 1$, the $m$-th forming time $T_m$ of $c_1,c_2,\cdots,c_r$ can be defined inductively as the forming time of $c_1,c_2,\cdots,c_r$ by the Markov chain $(X_{T_{m-1}+t})_{t\geq 0}$, where $T_0$ is understood as $0$.
\end{definition}

\section{Generalized Haldane equality for discrete-time Markov chains}
In this section, we shall state and prove the generalized Haldane equality for discrete-time Markov chains. Let $X=(X_n)_{n\geq 0}$ be an irreducible and recurrent discrete-time Markov chain with denumerable state space $\mathbb{S}$ and transition probability matrix $P = (p_{ij})_{i,j\in\mathbb{S}}$.

Before we state the generalized Haldane equality, we give the following definitions.

\begin{definition}
Let $i$ be a state and let $c=(i_1,i_2,\cdots,i_s)$ be a cycle. Then we say that cycle $c$ passes through state $i$ if $i\in\{i_1,i_2,\cdots,i_s\}$.
\end{definition}

\begin{definition}\label{similar}
Let $c_1=(i_1,i_2,\cdots,i_s)$ and $c_2=(j_1,j_2,\cdots,j_r)$ be two cycles. Then $c_1$ and $c_2$ are called similar if $s=r$ and $\{i_1,i_2,\cdots,i_s\}=\{j_1,j_2,\cdots,j_r\}$.
\end{definition}

According to the above two definitions, two cycles are similar if they pass through the same set of states. It is easy to see that similarity is an equivalence relation on the set of all cycles. For example, the six cycles, $c_1=(1,2,3,4)$, $c_2=(1,2,4,3)$, $c_3=(1,3,2,4)$, $c_4=(1,3,4,2)$, $c_5=(1,4,2,3)$, and $c_6=(1,4,3,2)$, are similar.

We next give the definition of the strengths of cycles for discrete-time Markov chains.

\begin{definition}
Let $c=(i_1,i_2,\cdots,i_s)$ be a cycle. Then the strength $\gamma^c$ of cycle $c$ is defined as
\begin{equation}
\gamma^c = p_{i_1i_2}p_{i_2i_3}\cdots p_{i_si_1}.
\end{equation}
\end{definition}

In the following discussion, the forming time of cycle $c$ is always denoted by $T^c$ and the strength of cycle $c$ is always denoted by $\gamma^c$ without further explanation.

The generalized Haldane equality, which characterizes the symmetry of the forming times of a family of similar cycles, is stated in the following theorem.

\begin{theorem}\label{Haldane1}
Let $c_1,c_2,\cdots,c_r$ be a family of similar cycles. Let $T = \min\{T^{c_1},T^{c_2},\cdots,T^{c_r}\}$. Then \\
(i) for each $n\geq 1$ and any $1\leq k,l \leq r$,
\begin{equation}\label{quotient1}
\frac{P(T^{c_k}=n,T=T^{c_k})}{P(T^{c_l}=n,T=T^{c_l})} = \frac{P(T=T^{c_k})}{P(T=T^{c_l})} = \frac{\gamma^{c_k}}{\gamma^{c_l}};
\end{equation}
(ii) for each $n\geq 1$,
\begin{equation}\label{conditional1}
P(T^{c_1}=n|T=T^{c_1}) = P(T^{c_2}=n|T=T^{c_2}) = \cdots = P(T^{c_r}=n|T=T^{c_r}).
\end{equation}
\end{theorem}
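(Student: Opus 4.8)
The plan is to first dispatch the two easy reductions and then concentrate on a single hard identity. Statement (ii) is an immediate consequence of (i): writing $P(T^{c_k}=n\mid T=T^{c_k})=P(T^{c_k}=n,T=T^{c_k})/P(T=T^{c_k})$, the first equality in \eqref{quotient1} shows that numerator and denominator carry the same factor $\gamma^{c_k}$ across different $k$, so the quotient does not depend on $k$. Likewise the second equality in \eqref{quotient1} follows from the first by summing over $n\ge 1$: assuming (as we may, else the statement is vacuous) that some $\gamma^{c_j}>0$, Lemma \ref{finiteness1}(i) gives $T<\infty$ almost surely, so $P(T=T^{c_k})=\sum_{n\ge1}P(T^{c_k}=n,T=T^{c_k})$. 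Everything therefore reduces to proving, for each fixed $n$, that $P(T^{c_k}=n,T=T^{c_k})/\gamma^{c_k}$ is the same for every $k$.

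To prove this I would analyze the event $\{T^{c_k}=n,\,T=T^{c_k}\}$ through the derived chain $Y$. Let $H$ be the common vertex set of the similar cycles and write the relevant rotation of $c_k$ as $(u_1,u_2,\dots,u_s)$. Forming $c_k$ at time $n$ as the first cycle of the family means that $Y_{n-1}=[\,\pi,u_1,u_2,\dots,u_s\,]$ with $\pi$ a (possibly empty) sequence of states outside $H$, and that $X_n=u_1$ closes the loop. First I would establish the structural fact that the surviving occurrences of $u_1,\dots,u_s$ in the tail can only be created by direct trajectory steps $u_i\to u_{i+1}$: any attempt to append $u_{i+1}$ with intervening non-$H$ states, or to visit an out-of-order state of $H$, either leaves a non-$H$ state interspersed in the final tail (impossible, since $c_k$ lives on $H$) or destroys part of the skeleton already built. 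Consequently, between the placement of $u_i$ and the step $u_i\to u_{i+1}$ the trajectory performs only \emph{closed excursions} that leave from and return to $u_i$ while avoiding every other state of $H$ and every state of $\pi$, i.e.\ excursions taboo on $(H\setminus\{u_i\})\cup\pi$.

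Next I would decompose $P(T^{c_k}=n,T=T^{c_k})$ accordingly, conditioning on the anchor $v^\ast\in H$ (the first state of $H$ in the final path, which fixes the rotation), on the time $m$ at which the surviving copy of $v^\ast$ is placed, and on the prefix $\pi$ present then. Using the strong Markov property at time $m$, the contribution of the remaining $n-m$ steps factors into the skeleton steps and the closed excursions. The skeleton contributes exactly $p_{u_1u_2}p_{u_2u_3}\cdots p_{u_su_1}=\gamma^{c_k}$, while the excursion leaving each anchor $a\in H$ contributes a taboo quantity depending only on $a$, $H$ and $\pi$, and never on the cyclic order in which the anchors are visited. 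Since every cyclic order $c_k$ visits each $a\in H$ exactly once, the product of the excursion contributions is a product over the unordered set $H$ and is thus identical for all similar cycles. This yields $P(T^{c_k}=n,T=T^{c_k})=\gamma^{c_k}R_n$, where $R_n$ collects the anchor, prefix and excursion factors and is manifestly independent of $k$; comparing $k$ and $l$ gives the ratio $\gamma^{c_k}/\gamma^{c_l}$.

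The hard part will be making the excursion decomposition fully rigorous. One must verify that the sum over anchors, placement times and prefixes is both exhaustive and free of double counting (each trajectory in the event has a unique surviving-anchor time), and, above all, one must prove the order-independence of the excursion factors by expressing them through taboo probabilities and isolating the clean combinatorial identity that makes $R_n$ symmetric in the similar cycles. I expect this taboo-probability bookkeeping — together with the minor points of handling zero-strength cycles in the ratios and confirming $T<\infty$ so that the sum over $n$ is legitimate — to be where the real work lies, whereas the algebraic conclusion, once the factorization $P(T^{c_k}=n,T=T^{c_k})=\gamma^{c_k}R_n$ is in hand, is immediate.
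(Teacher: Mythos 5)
Your reductions at the start are fine and match the paper: (ii) follows from (i), and the middle term of \eqref{quotient1} follows by summing over $n$. The gap is in the structural claim that drives your decomposition. You assert that between the placement of $u_i$ and the surviving step $u_i\to u_{i+1}$, the trajectory performs only closed excursions from $u_i$ that are taboo on \emph{all} of $(H\setminus\{u_i\})\cup\pi$, on the grounds that visiting an ``out-of-order'' state of $H$ would destroy the skeleton. That is false: visiting a state of $H$ that has \emph{not yet} been placed in the skeleton merely appends it to the derived chain and it is erased when the excursion returns to $u_i$; only revisiting an \emph{already placed} state truncates the skeleton. The paper's own worked example shows this: for the trajectory $1,2,3,2,4,5,2,3,1$ and the cycle $(1,2,3)$, the excursion from anchor $2$ at times $1$--$3$ visits $3\in H$, yet the trajectory belongs to $\{T^{(1,2,3)}=8,\ T^{(1,2,3)}<T^{(1,3,2)}\}$. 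So your decomposition is not exhaustive — it computes the probability of a strictly smaller event — and the theorem does not follow from it.

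This error is not cosmetic, because it is exactly what makes your order-independence ``manifest'' when in fact it is the hard part of the proof. The correct taboo sets are \emph{nested}: the excursion from the $j$-th anchor must avoid only the previously placed states $i,i^k_2,\dots,i^k_{j-1}$ (plus, for the excursions from $i$ itself, the constraint of not forming any cycle of the family, which for the later anchors is automatic since those excursions avoid the common state $i$). These taboo sets depend on the cyclic order, so the product of excursion factors is \emph{not} a product over the unordered set $H$. Its invariance under permutations of $i^k_2,\dots,i^k_s$ is a genuine convolution identity for taboo probabilities — the paper's Lemma \ref{transposition}, namely $\sum_{m=0}^{n}p_{ii}^H(m)p_{jj}^{H,i}(n-m)=\sum_{m=0}^{n}p_{jj}^H(m)p_{ii}^{H,j}(n-m)$, bootstrapped to full permutation invariance in Lemma \ref{invariance} — and this is the missing idea your proposal would need to supply once the event is decomposed correctly. (Two smaller remarks: starting from $i\in H$ there is no prefix $\pi$ and no choice of anchor, since $i$ is always the first entry of the derived chain, which is why the paper first proves the identity under $P_i$ and then passes to a general initial law by conditioning on the first entrance into $H$; and the constraint $T=T^{c_k}$ must be carried explicitly in the excursions from $i$, which is done via the modified taboo probability $p_{ii}^{c_1,\dots,c_r}(n_1)$.)
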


\begin{remark}
The above theorem, which seems a bit counter-intuitive at first sight, shows that although the distributions of the forming times of a family of similar cycles may not be the same, their distributions, conditional on the corresponding cycle is formed earlier than any other similar cycles, are the same. This is the first aspect of the generalized Haldane equality.
\end{remark}

\begin{remark}
If both the numerator and denominator in \eqref{quotient1} are 0, then \eqref{quotient1} is understood to hold trivially. In addition, if $P(T=T^{c_k})=0$ for some $k$, then \eqref{conditional1} is understood to hold trivially.
\end{remark}

In order to prove the generalized Haldane equality, we need to establish some deep properties of taboo probabilities. Let us first recall the definition of taboo probabilities, also called transition probabilities with a taboo set \cite{chung1967markov}.

\begin{definition}
Let $i,j$ be two states and $H$ be a subset of $\mathbb{S}$. Then the $n$-step transition probability from state $i$ to state $j$ with taboo set $H$ is defined as
\begin{equation}
p_{ij}^H(n) = P_i(X_n=j,X_1,\cdots,X_{n-1}\notin H).
\end{equation}
If the taboo set is the union of a set $H$ and a finite number of states $k_1,\cdots,k_s$, then we shall denote the taboo probability by $p_{ij}^{H,k_1,\cdots,k_s}(n)$.
\end{definition}

The next four lemma give some deep properties of taboo properties.

\begin{lemma}\label{basic}
Let $H$ be a subset of $\mathbb{S}$ and let $k\notin H$. Then for each $n\geq 0$ and any two states $i,j$,
\begin{equation}
p_{ij}^H(n) = p_{ij}^{H,k}(n)+\sum_{m=1}^{n-1}p_{ik}^H(m)p_{kj}^{H,k}(n-m).
\end{equation}
\end{lemma}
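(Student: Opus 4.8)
The plan is to prove the identity by a \emph{last-visit decomposition} with respect to the state $k$. The left-hand side $p_{ij}^H(n)$ is the total probability of all trajectories that start at $i$, occupy $j$ at time $n$, and avoid $H$ at the intermediate times $1,\dots,n-1$. I would classify each such trajectory according to whether it ever visits $k$ at one of those intermediate times, and, if it does, according to the last such time. Because $k\notin H$, a trajectory that avoids $H$ at intermediate times is free to pass through $k$, so this rule genuinely partitions the underlying event into disjoint pieces.

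The two cases of this partition correspond exactly to the two terms on the right. If the trajectory never occupies $k$ at times $1,\dots,n-1$, then it avoids the enlarged taboo set $H\cup\{k\}$ throughout those times, and the contribution of all such trajectories is precisely $p_{ij}^{H,k}(n)$. Otherwise, let $m$ be the last index in $\{1,\dots,n-1\}$ with $X_m=k$, so $1\le m\le n-1$. On the initial segment the chain runs from $i$ to $k$ in $m$ steps while avoiding $H$ at times $1,\dots,m-1$, where it may revisit $k$ freely before the final visit; this contributes $p_{ik}^H(m)$. On the terminal segment the chain runs from $k$ to $j$ in $n-m$ steps while avoiding both $H$ and $k$ at times $m+1,\dots,n-1$, the avoidance of $k$ being forced by the maximality of $m$; this contributes $p_{kj}^{H,k}(n-m)$. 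Factoring the joint probability by the Markov property at time $m$ and using time-homogeneity, then summing over the disjoint values of $m$, produces $\sum_{m=1}^{n-1}p_{ik}^H(m)p_{kj}^{H,k}(n-m)$.

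Since the two cases are disjoint and exhaustive, adding their contributions yields the asserted identity. The only step requiring genuine care is the measure-theoretic justification of the last-visit splitting: I would write the event $\{X_m=k,\ X_{m+1},\dots,X_{n-1}\neq k,\ X_n=j\}$ explicitly, intersected with the avoidance of $H$, and apply the Markov property to factor it into the two taboo probabilities; this is routine once the conditioning event is made explicit, and it is exactly the reason the second segment inherits the enlarged taboo $H\cup\{k\}$ while the first does not. I would also dispose of the boundary cases separately: for $n=0$ and $n=1$ the sum is empty and both sides reduce to $\delta_{ij}$ and $p_{ij}$ respectively, and the argument is unchanged when $j=k$, since $X_n=k$ then occurs at time $n\notin\{1,\dots,n-1\}$ and so does not interfere with the bookkeeping of intermediate visits to $k$.
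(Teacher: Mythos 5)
Your proof is correct and follows essentially the same route as the paper: a last-visit decomposition at the state $k$ over the intermediate times $1,\dots,n-1$, with the never-visits case giving $p_{ij}^{H,k}(n)$ and the Markov property at the last visit time $m$ factoring the remaining event into $p_{ik}^H(m)\,p_{kj}^{H,k}(n-m)$. The paper's proof is exactly this argument, written out by inserting the event $\{X_m=k,\ X_{m+1},\dots,X_{n-1}\neq k\}$ and applying the Markov property, with the boundary cases $n=0,1$ checked separately as you propose.
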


\begin{proof}
When $n=0$ or $n=1$, it is easy to check that the theorem holds. We next prove the theorem for $n\geq 2$. Note that
\begin{equation}
p_{ij}^H(n) = p_{ij}^{H,k}(n)+P_i(X_n=j,X_1,\cdots,X_{n-1}\notin H,k\in\{X_1,\cdots,X_{n-1}\}).
\end{equation}
Then by the Markov property, we obtain that
\begin{equation*}
\begin{split}
& P_i(X_n=j,X_1,\cdots,X_{n-1}\notin H,k\in\{X_1,\cdots,X_{n-1}\}) \\
&= \sum_{m=1}^{n-1}P_i(X_n=j,X_1,\cdots,X_{n-1}\notin H,X_m=k,X_{m+1},\cdots,X_{n-1}\neq k) \\
&= \sum_{m=1}^{n-1}P_i(X_m=k,X_1,\cdots,X_{m-1}\notin H)P_k(X_{n-m}=j,X_1,\cdots,X_{n-m-1}\notin H\cup\{k\}) \\
&= \sum_{m=1}^{n-1}p_{ik}^H(m)p_{kj}^{H,k}(n-m).
\end{split}
\end{equation*}
This completes the proof of this lemma.
\end{proof}

\begin{lemma}\label{transposition}
Let $H$ be a subset of $\mathbb{S}$. Let $i,j\notin H$ and $i\neq j$. Then for each $n\geq 0$,
\begin{equation}
\sum_{m=0}^{n}p_{ii}^H(m)p_{jj}^{H,i}(n-m) = \sum_{m=0}^{n}p_{jj}^H(m)p_{ii}^{H,j}(n-m).
\end{equation}
\end{lemma}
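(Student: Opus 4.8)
The plan is to pass to generating functions and reduce the claimed identity to a purely algebraic consequence of Lemma \ref{basic}. For two states $a,b$ and a taboo set $G$, I would write $F_{ab}^G(z)=\sum_{n\ge 0}p_{ab}^G(n)z^n$, regarded as a formal power series, so that no question of convergence arises and equality of two such series means equality of all their coefficients. Since the $n$-th coefficient of a product of two such series is exactly the convolution appearing in the statement, the lemma is equivalent to the single identity
\[
F_{ii}^H\,F_{jj}^{H,i}=F_{jj}^H\,F_{ii}^{H,j}.
\]
Thus the whole problem turns into manipulating these four generating functions.

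First I would extract from Lemma \ref{basic} the four decompositions I need, in each case translating the stated convolution into a product (or sum of products) of generating functions by using the convention $p_{ab}^G(0)=\delta_{ab}$ together with $i\ne j$ and $i,j\notin H$ to handle the $m=0$ and $m=n$ boundary terms. Taking the extra taboo state to be $i$ (the decomposition by the last visit to $i$) gives $F_{ij}^H=F_{ii}^H F_{ij}^{H,i}$, and symmetrically $F_{ji}^H=F_{jj}^H F_{ji}^{H,j}$. Taking start and end both equal to $i$ with extra taboo $j$ gives $F_{ii}^H=F_{ii}^{H,j}+F_{ij}^H F_{ji}^{H,j}$, and symmetrically $F_{jj}^H=F_{jj}^{H,i}+F_{ji}^H F_{ij}^{H,i}$.

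Next I would substitute the first pair of relations into the second. Replacing $F_{ij}^H$ by $F_{ii}^H F_{ij}^{H,i}$ in the identity for $F_{ii}^H$ yields
\[
F_{ii}^{H,j}=F_{ii}^H\bigl(1-F_{ij}^{H,i}F_{ji}^{H,j}\bigr),
\]
while replacing $F_{ji}^H$ by $F_{jj}^H F_{ji}^{H,j}$ in the identity for $F_{jj}^H$ yields
\[
F_{jj}^{H,i}=F_{jj}^H\bigl(1-F_{ij}^{H,i}F_{ji}^{H,j}\bigr).
\]
The key observation, and the only genuinely non-routine step, is that the same symmetric factor $1-F_{ij}^{H,i}F_{ji}^{H,j}$ occurs in both. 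Multiplying the first identity by $F_{jj}^H$ and the second by $F_{ii}^H$ makes the two right-hand sides coincide, so $F_{ii}^{H,j}F_{jj}^H=F_{jj}^{H,i}F_{ii}^H$; rearranging the products gives exactly the displayed identity, and comparing coefficients of $z^n$ recovers the lemma. Note that this route never divides by a power series, so no invertibility argument is needed.

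The main obstacle I anticipate is purely bookkeeping: checking that each invocation of Lemma \ref{basic} really produces the clean generating-function relation claimed, i.e. verifying that the $m=0$ term is either absorbed into the lone term of Lemma \ref{basic} or vanishes, and that the $m=n$ term vanishes, precisely because $i\ne j$ and $i,j\notin H$. Once these four relations are secured, the emergence of the common factor $1-F_{ij}^{H,i}F_{ji}^{H,j}$ makes the conclusion immediate, and no further probabilistic input is required.
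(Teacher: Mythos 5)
Your proof is correct and is essentially the paper's own argument recast in formal-power-series language: both rest solely on Lemma \ref{basic} (applied once with extra taboo $i$ and once with extra taboo $j$) and conclude by exhibiting a manifestly $i\leftrightarrow j$-symmetric common value --- $F_{ii}^H F_{jj}^H - F_{ij}^H F_{ji}^H$ in the paper versus the equal quantity $F_{ii}^H F_{jj}^H\bigl(1-F_{ij}^{H,i}F_{ji}^{H,j}\bigr)$ in yours, which coincide by your first pair of relations. Your bookkeeping of the boundary terms (the $m=0$ term being absorbed into the lone term of Lemma \ref{basic}, and the $m=n$ term vanishing because $i\neq j$) is exactly right, so the four generating-function relations and hence the conclusion hold.
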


\begin{proof}
By Lemma \ref{basic}, we have
\begin{equation}
\begin{split}
& \sum_{m=0}^{n}p_{ii}^H(m)p_{jj}^{H,i}(n-m) \\
&= \sum_{m=0}^np_{ii}^H(m)p_{jj}^H(n-m) - \sum_{m=0}^np_{ii}^H(m)\sum_{l=1}^{n-m-1}p_{ji}^H(l)p_{ij}^{H,i}(n-m-l) \\
&= \sum_{m=0}^np_{ii}^H(m)p_{jj}^H(n-m) - \sum_{m=0}^np_{ii}^H(m)\sum_{l=0}^{n-m}p_{ji}^H(l)p_{ij}^{H,i}(n-m-l) \\
&= \sum_{m=0}^np_{ii}^H(m)p_{jj}^H(n-m) - \sum_{l=0}^np_{ji}^H(l)\sum_{m=0}^{n-l}p_{ii}^H(m)p_{ij}^{H,i}(n-m-l). \\
\end{split}
\end{equation}
Using Lemma \ref{basic} again, we have
\begin{equation}
\begin{split}
& \sum_{m=0}^{n-l}p_{ii}^H(m)p_{ij}^{H,i}(n-m-l) = p_{ij}^{H,i}(n-l) + \sum_{m=1}^{n-l-1}p_{ii}^H(m)p_{ij}^{H,i}(n-m-l) \\
&= p_{ij}^{H,i}(n-l) + p_{ij}^H(n-l) - p_{ij}^{H,i}(n-l) = p_{ij}^H(n-l).
\end{split}
\end{equation}
Thus we obtain that
\begin{equation}
\sum_{m=0}^{n}p_{ii}^H(m)p_{jj}^{H,i}(n-m) = \sum_{m=0}^np_{ii}^H(m)p_{jj}^H(n-m) - \sum_{l=0}^np_{ji}^H(l)p_{ij}^H(n-l).
\end{equation}
Commuting $i$ and $j$ in the above equation, we finally obtain that
\begin{equation}
\begin{split}
& \sum_{m=0}^{n}p_{jj}^H(m)p_{ii}^{H,j}(n-m) = \sum_{m=0}^np_{jj}^H(m)p_{ii}^H(n-m) - \sum_{l=0}^np_{ij}^H(l)p_{ji}^H(n-l) \\
&= \sum_{m=0}^np_{ii}^H(m)p_{jj}^H(n-m) - \sum_{l=0}^np_{ji}^H(l)p_{ij}^H(n-l) = \sum_{m=0}^{n}p_{ii}^H(m)p_{jj}^{H,i}(n-m),
\end{split}
\end{equation}
which gives the desired result.
\end{proof}

\begin{lemma}\label{invariance}
Let $H$ be a subset of $\mathbb{S}$. For any finite sequence $i_1,i_2,\cdots,i_s$ of distinct states, let
\begin{equation}
G^H_n(i_1,i_2,\cdots,i_s) = \sum_{n_1+n_2+\cdots+n_s=n}p_{i_1i_1}^{H}(n_1)p_{i_2i_2}^{H,i_1}(n_2)\cdots p_{i_si_s}^{H,i_1,\cdots,i_{s-1}}(n_s).
\end{equation}
Then for each $n\geq 0$, $G^H_n(i_1,i_2,\cdots,i_s)$ is invariant under any permutation of $i_1,\cdots,i_s$.
\end{lemma}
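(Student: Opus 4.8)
The plan is to exploit the fact that the symmetric group on $s$ symbols is generated by adjacent transpositions, so it suffices to prove that $G^H_n(i_1,\ldots,i_s)$ is unchanged when we interchange two consecutive states $i_k$ and $i_{k+1}$. Lemma \ref{transposition} is tailored for exactly this: read correctly, it says that relative to a common taboo set the order of two distinct states can be swapped inside a convolution of the shape $p_{ii}^{H}\ast p_{jj}^{H,i}$, which is the atomic operation underlying any adjacent transposition.

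First I would isolate the two factors at positions $k$ and $k+1$. Writing $f_j(m) = p_{i_j i_j}^{H,i_1,\ldots,i_{j-1}}(m)$, we have $G^H_n(i_1,\ldots,i_s) = \sum_{n_1+\cdots+n_s=n}\prod_{j=1}^s f_j(n_j)$. I would freeze the indices $n_1,\ldots,n_{k-1},n_{k+2},\ldots,n_s$ together with the partial sum $M = n_k+n_{k+1}$, and sum first over $n_k+n_{k+1}=M$. The factors $f_1,\ldots,f_{k-1}$ do not involve $i_k$ or $i_{k+1}$ and so are untouched by the swap. The factors $f_{k+2},\ldots,f_s$ carry taboo set $H\cup\{i_1,\ldots,i_{j-1}\}$, which contains both $i_k$ and $i_{k+1}$; since a taboo set is an unordered set, $\{i_1,\ldots,i_{j-1}\}$ is identical before and after interchanging $i_k$ and $i_{k+1}$, so these factors are untouched as well. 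The only affected piece is the inner convolution $\sum_{n_k+n_{k+1}=M} p_{i_k i_k}^{\tilde H}(n_k)\,p_{i_{k+1} i_{k+1}}^{\tilde H, i_k}(n_{k+1})$ with $\tilde H = H\cup\{i_1,\ldots,i_{k-1}\}$.

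Applying Lemma \ref{transposition} with base taboo set $\tilde H$ and states $i_k, i_{k+1}$ converts this inner convolution into $\sum_{n_k+n_{k+1}=M} p_{i_{k+1} i_{k+1}}^{\tilde H}(n_k)\,p_{i_k i_k}^{\tilde H, i_{k+1}}(n_{k+1})$, which is exactly the inner convolution produced by the sequence with $i_k$ and $i_{k+1}$ interchanged. Reassembling by summing back over the frozen indices and over $M$ (legitimate because all terms are nonnegative and, for fixed $n$, there are only finitely many compositions $n_1+\cdots+n_s=n$) yields $G^H_n(\ldots,i_k,i_{k+1},\ldots) = G^H_n(\ldots,i_{k+1},i_k,\ldots)$, and iterating over adjacent transpositions gives invariance under every permutation.

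The main obstacle is not a hard computation but getting the bookkeeping exactly right: one must recognize that enlarging the taboo set to $\tilde H$ reduces an arbitrary adjacent swap inside the $s$-fold convolution to the two-state identity of Lemma \ref{transposition}, and one must verify that the downstream taboo sets really are insensitive to the order of $i_k$ and $i_{k+1}$. One also needs $i_k, i_{k+1}\notin\tilde H$ for Lemma \ref{transposition} to apply; the distinctness of $i_1,\ldots,i_s$ guarantees $i_k, i_{k+1}\notin\{i_1,\ldots,i_{k-1}\}$, and in the intended application the states $i_1,\ldots,i_s$ lie outside $H$, so the hypothesis is met.
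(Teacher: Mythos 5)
Your proposal is correct and follows essentially the same route as the paper's own proof: reduce to adjacent transpositions, freeze the outer summation indices and the partial sum $n_k+n_{k+1}$, and apply Lemma \ref{transposition} with the enlarged taboo set $H\cup\{i_1,\ldots,i_{k-1}\}$ to the inner convolution. Your extra remark verifying the hypothesis $i_k,i_{k+1}\notin\tilde H$ is a point the paper leaves implicit, but it does not change the argument.
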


\begin{proof}
Since any permutation can be decomposed into the product of some transpositions of adjacent elements, we only need to prove that $G^H_n(i_1,i_2,\cdots,i_s)$ is invariant if we exchange two adjacent elements, $i_k$ and $i_{k+1}$, and keep all other elements fixed. By Lemma \ref{transposition}, we obtain that
\begin{eqnarray*}
&& G^H_n(i_1,\cdots,i_k,i_{k+1},\cdots,i_s) \\
&=& \sum_{n_1+\cdots+n_s=n}p_{i_1i_1}^{H}(n_1)\cdots p_{i_ki_k}^{H,i_1,\cdots,i_{k-1}}(n_k)p_{i_{k+1}i_{k+1}}^{H,i_1,\cdots,i_k}(n_{k+1})\cdots p_{i_si_s}^{H,i_1,\cdots,i_{s-1}}(n_s) \\
&=& \sum_{m=0}^n\sum_{n_1+\cdots+n_{k-1}+n_{k+2}+\cdots+n_s=n-m}p_{i_1i_1}^{H}(n_1)\cdots p_{i_{k-1}i_{k-1}}^{H,i_1,\cdots,i_{k-2}}(n_{k-1}) \\
&& p_{i_{k+2}i_{k+2}}^{H,i_1,\cdots,i_{k+1}}(n_{k+2})\cdots p_{i_si_s}^{H,i_1,\cdots,i_{s-1}}(n_s)\sum_{n_k+n_{k+1}=m}p_{i_ki_k}^{H,i_1,\cdots,i_{k-1}}(n_k)p_{i_{k+1}i_{k+1}}^{H,i_1,\cdots,i_k}(n_{k+1}) \\
&=& \sum_{m=0}^n\sum_{n_1+\cdots+n_{k-1}+n_{k+2}+\cdots+n_s=n-m}p_{i_1i_1}^{H}(n_1)\cdots p_{i_{k-1}i_{k-1}}^{H,i_1,\cdots,i_{k-2}}(n_{k-1}) \\
&& p_{i_{k+2}i_{k+2}}^{H,i_1,\cdots,i_{k+1}}(n_{k+2})\cdots p_{i_si_s}^{H,i_1,\cdots,i_{s-1}}(n_s)\sum_{n_k+n_{k+1}=m}p_{i_{k+1}i_{k+1}}^{H,i_1,\cdots,i_{k-1}}(n_k)p_{i_ki_k}^{H,i_1,\cdots,i_{k-1},i_{k+1}}(n_{k+1}) \\
&=& \sum_{n_1+\cdots+n_s=n}p_{i_1i_1}^{H}(n_1)\cdots p_{i_{k-1}i_{k-1}}^{H,i_1,\cdots,i_{k-2}}(n_{k-1}) p_{i_{k+1}i_{k+1}}^{H,i_1,\cdots,i_{k-1}}(n_k)p_{i_ki_k}^{H,i_1,\cdots,i_{k-1},i_{k+1}}(n_{k+1}) \\
&& p_{i_{k+2}i_{k+2}}^{H,i_1,\cdots,i_{k+1}}(n_{k+2})\cdots p_{i_si_s}^{H,i_1,\cdots,i_{s-1}}(n_s) \\
&=& G^H_n(i_1,\cdots,i_{k-1},i_{k+1},i_k,i_{k+2},\cdots,i_s).
\end{eqnarray*}
This completes the proof of this lemma.
\end{proof}

The following lemma will play a key role in the proof of the generalized Haldane equality.

\begin{lemma}\label{decomposition}
Let $c_1,c_2,\cdots,c_r$ be a family of cycles passing through a common state $i$. Let $T = \min\{T^{c_1},$ $T^{c_2},\cdots,T^{c_r}\}$. Let $c_k = (i,i^k_2,\cdots,i^k_s)$. Then for each $n\geq 1$,
\begin{equation}
P_i(T^{c_k}=n,T=T^{c_k}) = F^i_n(i^k_2,\cdots,i^k_s)\gamma^{c_k},
\end{equation}
where $F^i_n(i^k_2,\cdots,i^k_s)$, which is defined in \eqref{complex}, is invariant under any permutation of $i^k_2,\cdots,i^k_s$.
\end{lemma}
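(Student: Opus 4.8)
The plan is to compute $P_i(T^{c_k}=n,T=T^{c_k})$ by exploiting the structure of the derived chain and then to read off $F^i_n$ as an explicit sum of taboo probabilities whose symmetry is governed by Lemma \ref{invariance}. First I would observe that, because every cycle in the family passes through the common state $i$ and we start from $i$ (under $P_i$), the state $i$ can never be erased from the derived chain: it permanently sits at the base of the active sequence $Y_n$. Consequently a cycle through $i$ is formed at time $n$ if and only if $X_n=i$ and the active sequence at time $n-1$ is exactly $[i,i^k_2,\cdots,i^k_s]$ for the corresponding cycle. This reduces the event $\{T^{c_k}=n,\,T=T^{c_k}\}$ to the concrete description: the loop-erased path grows from $[i]$ up to $[i,i^k_2,\cdots,i^k_s]$ and then closes by a jump to $i$ at time $n$, while no complete family cycle is closed at any earlier time.

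Next I would decompose such a trajectory by the strong Markov property along its ``progress'' steps. The $s$ directed edges $i\to i^k_2\to\cdots\to i^k_s\to i$ are traversed once each as the active sequence is extended and finally closed, and these steps contribute precisely the factor $p_{ii^k_2}p_{i^k_2i^k_3}\cdots p_{i^k_si}=\gamma^{c_k}$. Everything that happens between two consecutive progress steps is an excursion that returns to the current endpoint without destroying the partial path, i.e. an excursion from $i^k_m$ avoiding the already-committed states $\{i,i^k_2,\cdots,i^k_{m-1}\}$; such an excursion of length $n_m$ has probability $p^{i,i^k_2,\cdots,i^k_{m-1}}_{i^k_m i^k_m}(n_m)$. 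Summing over all admissible lengths and insertion patterns collects the non-edge contributions into the factor denoted $F^i_n(i^k_2,\cdots,i^k_s)$ in \eqref{complex}, yielding the asserted factorization $P_i(T^{c_k}=n,T=T^{c_k})=F^i_n(i^k_2,\cdots,i^k_s)\gamma^{c_k}$. The key structural observation is that the excursion part attached to the states $i^k_2,\cdots,i^k_s$ has exactly the nested-taboo form $G^{\{i\}}_\bullet(i^k_2,\cdots,i^k_s)$ of Lemma \ref{invariance}.

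With the factorization in hand, the symmetry of $F^i_n$ would follow by invoking Lemma \ref{invariance}. The nested excursion part is literally a $G^H_\bullet$ expression with $i\in H$, hence invariant under any permutation of $i^k_2,\cdots,i^k_s$; the remaining contribution, coming from excursions based at the root $i$, depends only on $i$ and on the family of forbidden cycles, not on the ordering of $i^k_2,\cdots,i^k_s$, so it too is unaffected by such permutations. Since $F^i_n$ is a time-convolution of these two symmetric pieces, it is itself invariant, which is the claim.

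I expect the main obstacle to be the bookkeeping of the global constraint ``no family cycle is closed before time $n$.'' This is not a single state-taboo: the excursions based at the root $i$ may legitimately form non-family cycles through $i$, and one must remove exactly those root excursions that would prematurely complete some $c_l$. Making this precise --- organizing the root contribution so that the premature-completion terms are subtracted off while the full expression stays manifestly symmetric in $i^k_2,\cdots,i^k_s$, so that Lemma \ref{invariance} can be applied termwise --- is the delicate point, and is presumably where the ``complex'' definition \eqref{complex} together with the preparatory identities of Lemmas \ref{basic} and \ref{transposition} do their real work.
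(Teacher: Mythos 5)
Your proposal is correct and follows essentially the same last-exit decomposition as the paper: the $s$ edges of $c_k$ contribute $\gamma^{c_k}$, the intermediate excursions give the nested taboo product $G^i_\bullet(i^k_2,\cdots,i^k_s)$ handled by Lemma \ref{invariance}, and the root factor $p_{ii}^{c_1,\cdots,c_r}(n_1)$ is symmetric simply because it references only the fixed family of cycles, not the ordering of $i^k_2,\cdots,i^k_s$. The difficulty you anticipate in your final paragraph does not actually arise: since every $c_l$ passes through $i$ and the non-root excursions avoid $i$, the no-premature-cycle constraint bites only on the root segment and is absorbed directly into the definition of $p_{ii}^{c_1,\cdots,c_r}(n_1)$ with no subtraction needed, while Lemmas \ref{basic} and \ref{transposition} are used only to establish Lemma \ref{invariance} itself.
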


\begin{proof}
Note that the event $\{T^{c_k}=n,T=T^{c_k}\}$ is equivalent to saying that $X$ forms cycle $c_k$ at time $n$ and does not form cycles $c_1,c_2,\cdots,c_r$ before time $n$. In order to make this event occur, the Markov chain $X$ must finish the following procedures.

First, $X$ must take $n_1$ steps to return from $i$ to $i$ without forming cycles $c_1,c_2,\cdots,c_r$, and then jump from $i$ to $i^k_2$. Second, $X$ must take $n_2$ steps to return from $i^k_2$ to $i^k_2$ without entering $i$ and without forming cycles $c_1,c_2,\cdots,c_r$, and then jump from $i^k_2$ to $i^k_3$. Third, $X$ must take $n_3$ steps to return from $i^k_3$ to $i^k_3$ without entering $i,i^k_2$ and without forming cycles $c_1,c_2,\cdots,c_r$, and then jump from $i^k_3$ to $i^k_4$, and so on. Finally, $X$ must take $n_s$ steps to return from $i^k_s$ to $i^k_s$ without entering $i,i^k_1,\cdots,i^k_{s-1}$ and without forming cycles $c_1,c_2,\cdots,c_r$, and then jump from $i^k_s$ to $i$. Here, the steps $n_1,n_2,\cdots,n_s$ must satisfy $(n_1+1)+(n_2+1)+\cdots+(n_s+1) = n$, that is, $n_1+n_2+\cdots+n_s = n-s$.

We make a crucial observation that if $X$ does not enter $i$, it will never form any one of the cycles $c_1,c_2,\cdots,c_r$ since all these cycles pass through $i$. Let $p_{ii}^{c_1,c_2,\cdots,c_r}(n_1)$ denote the probability that $X$ takes $n_1$ steps to return from $i$ to $i$ without forming cycles $c_1,c_2,\cdots,c_r$. According to the above discussion, we obtain that
\begin{equation*}\label{first}
\begin{split}
& P_i(T^{c_k}=n,T=T^{c_k}) \\
&= \sum_{n_1+n_2+\cdots+n_s=n-s}p_{ii}^{c_1,c_2,\cdots,c_r}(n_1) p_{ii^k_2}p_{i^k_2i^k_2}^i(n_2)p_{i^k_2i^k_3}p_{i^k_3i^k_3}^{i,i^k_2}(n_3)p_{i^k_3i^k_4}\cdots p_{i^k_si^k_s}^{i,i^k_1,\cdots,i^k_{s-1}}(n_s)p_{i^k_si} \\
&= \left[\sum_{n_1=0}^{n-s}p_{ii}^{c_1,c_2,\cdots,c_r}(n_1)G^i_{n-n_1-s}(i^k_2,\cdots,i^k_s)\right] p_{ii^k_2}p_{i^k_2i^k_3}\cdots p_{i^k_si},
\end{split}
\end{equation*}
where
\begin{equation}
\begin{split}
G^i_{n-n_1-s}(i^k_2,\cdots,i^k_s) = \sum_{n_2+\cdots+n_s=n-n_1-s}p_{i^k_2i^k_2}^i(n_2)p_{i^k_3i^k_3}^{i,i^k_2}(n_3)\cdots p_{i^k_si^k_s}^{i,i^k_1,\cdots,i^k_{s-1}}(n_s).
\end{split}
\end{equation}
By Lemma \ref{invariance}, $G^i_{n-n_1-s}(i^k_2,\cdots,i^k_s)$ is invariant under any permutation of $i^k_2,\cdots,i^k_s$. Let
\begin{equation}\label{complex}
F^i_n(i^k_2,\cdots,i^k_s) = \sum_{n_1=0}^{n-s}p_{ii}^{c_1,c_2,\cdots,c_r}(n_1)G^i_{n-n_1-s}(i^k_2,\cdots,i^k_s).
\end{equation}
Then $F^i_n(i^k_2,\cdots,i^k_s)$ is invariant under any permutation of $i^k_2,\cdots,i^k_s$. This completes the proof of this lemma.
\end{proof}

\begin{remark}
The core idea in the above proof is to decompose the state transitions of each trajectory in the event $\{T^{c_k}=n,T=T^{c_k}\}$ into invalid transitions and valid transitions. During the invalid transitions, $X$ will walk around in circles without contributing to the forming of cycle $c_k$. During the valid transitions, however, $X$ will jump along cycle $c_k$. In this way, we can decompose the probability $P_i(T^{c_k}=n,T=T^{c_k})$ into the product of an invalid part $F^i_n(i^k_2,\cdots,i^k_s)$ and a valid part $\gamma^{c_k}$. The invalid part is invariant under any permutation of $i^k_2,\cdots,i^k_s$ and the valid part is independent of time $n$.
\end{remark}

We are now in a position to prove the generalized Haldane equality.

\begin{proof}[Proof of Theorem \ref{Haldane1}]
It is easy to see that (ii) is a direct corollary of (i). Thus we only need to prove (i). Since $c_1,c_2,\cdots,c_r$ are similar, they must pass through the same set of states, denoted by $H=\{i_1,i_2,\cdots,i_s\}$.

We first prove (i) when $X$ starts from a particular state $i\in H$. Write $c_k=(i,i^k_2,\cdots,i^k_s)$ and $c_l=(i,i^l_2,\cdots,i^l_s)$. By Lemma \ref{decomposition}, we have
\begin{equation}
\begin{split}
P_i(T^{c_k}=n,T=T^{c_k}) &= F^i_n(i^k_2,\cdots,i^k_s)\gamma^{c_k}, \\
P_i(T^{c_l}=n,T=T^{c_l}) &= F^i_n(i^l_2,\cdots,i^l_s)\gamma^{c_l},
\end{split}
\end{equation}
where $F^i_n(i^k_2,\cdots,i^k_s)$ is invariant under any permutation of $i^k_2,\cdots,i^k_s$. Since $c_k$ and $c_l$ are similar, $i^k_2,\cdots,i^k_s$ can be transformed into $i^l_2,\cdots,i^l_s$ by a permutation. This shows that
\begin{equation}\label{special}
\frac{P_i(T^{c_k}=n,T=T^{c_k})}{P_i(T^{c_l}=n,T=T^{c_l})} = \frac{\gamma^{c_k}}{\gamma^{c_l}}.
\end{equation}

We next prove (i) when $X$ starts from any initial distribution $\pi=(\pi_i)_{i\in\mathbb{S}}$. Let $\tau = \inf\{n\geq 0: X_n\in H\}$. It is easy to see that
\begin{eqnarray*}
&& P(T^{c_k}=n,T=T^{c_k}) \\
&=& \sum_{m=0}^nP(T^{c_k}=n,T=T^{c_k},\tau=m) \\
&=& P(T^{c_k}=n,T=T^{c_k},\tau=0)+\sum_{m=1}^nP(T^{c_k}=n,T=T^{c_k},\tau=m) \\
&=& \sum_{i\in H}\pi_i P_i(T^{c_k}=n,T=T^{c_k}) \\
&& +\sum_{m=1}^n\sum_{i\notin H}\sum_{j\in H}\pi_i P_i(T^{c_k}=n,T=T^{c_k},X_m=j,X_1,\cdots,X_{m-1}\notin H).
\end{eqnarray*}
By the Markov property, we have
\begin{equation}
\begin{split}
& P_i(T^{c_k}=n,T=T^{c_k},X_m=j,X_1,\cdots,X_{m-1}\notin H) \\
&= P_i(X_m=j,X_1,\cdots,X_{m-1}\notin H)P_j(T^{c_k}=n-m,T=T^{c_k}) \\
&= p_{ij}^H(m)P_j(T^{c_k}=n-m,T=T^{c_k}).
\end{split}
\end{equation}
Thus we obtain that
\begin{eqnarray*}\label{generalinitial}
P(T^{c_k}=n,T=T^{c_k}) &=& \sum_{i\in H}\pi_i P_i(T^{c_k}=n,T=T^{c_k}) \\
&& +\sum_{m=1}^n\sum_{i\notin H}\sum_{j\in H}\pi_ip_{ij}^H(m)P_j(T^{c_k}=n-m,T=T^{c_k}).
\end{eqnarray*}
According to \eqref{special} and the above equation, we see that
\begin{equation}
\frac{P(T^{c_k}=n,T=T^{c_k})}{P(T^{c_l}=n,T=T^{c_l})} = \frac{\gamma^{c_k}}{\gamma^{c_l}}.
\end{equation}
Since the above equation holds for each $n$, we obtain the desired result.
\end{proof}

The next corollary gives another aspect of the generalized Haldane equality.

\begin{corollary}\label{independent1}
Let $c_1,c_2,\cdots,c_r$ be a family of similar cycles. Let $T = \min\{T^{c_1},T^{c_2},\cdots,T^{c_r}\}$. Then for each $n\geq 0$ and each $1\leq k\leq r$,
\begin{equation}
P(T=n,T=T^{c_k}) = P(T=n)P(T=T^{c_k}).
\end{equation}
\end{corollary}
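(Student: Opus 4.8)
The plan is to deduce the asserted independence directly from Theorem \ref{Haldane1}(i), which already pins down the joint probabilities $P(T^{c_k}=n,\,T=T^{c_k})$ up to the factor $\gamma^{c_k}$. The corollary says that the time $T$ at which the family first closes a cycle is independent of the index of the cycle that closes first, so the natural route is to express both $P(T=n,\,T=T^{c_k})$ and the marginal $P(T=n)$ in terms of the quantities $P(T^{c_l}=n,\,T=T^{c_l})$ appearing in \eqref{quotient1}. The only genuinely new ingredient will be a bookkeeping fact about the derived chain: at any single time step at most one cycle can be completed.

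First I would record the event identity $\{T=n,\,T=T^{c_k}\}=\{T^{c_k}=n,\,T=T^{c_k}\}$ for every finite $n$ (if $T=T^{c_k}$ then $T^{c_k}=T=n$, and conversely), so that $P(T=n,\,T=T^{c_k})=P(T^{c_k}=n,\,T=T^{c_k})$. Next, cross-multiplying the two equalities in \eqref{quotient1} shows that the quotient $P(T^{c_k}=n,\,T=T^{c_k})/P(T=T^{c_k})$ is the same for every $k$; denote its common value by $r_n$. Hence $P(T^{c_k}=n,\,T=T^{c_k})=r_n\,P(T=T^{c_k})$ for each $k$.

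The third step is to sum this relation over $k$. Here I would invoke that two distinct cycles can never be formed at the same instant, since the transition $Y_{n-1}\to Y_n$ of the derived chain chops off at most one cycle; thus on the event $\{T<\infty\}$ exactly one index $k$ satisfies $T=T^{c_k}$, and the events $\{T=T^{c_k}\}$, $1\le k\le r$, are disjoint and partition $\{T<\infty\}$. Consequently, for finite $n$, $P(T=n)=\sum_{k=1}^r P(T^{c_k}=n,\,T=T^{c_k})=r_n\sum_{k=1}^r P(T=T^{c_k})$, while $\sum_{k=1}^r P(T=T^{c_k})=P(T<\infty)=1$, so that $P(T=n)=r_n$. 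Combining the three steps yields $P(T=n,\,T=T^{c_k})=r_n\,P(T=T^{c_k})=P(T=n)\,P(T=T^{c_k})$, as required.

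The step needing the most care is the normalization $\sum_{k}P(T=T^{c_k})=1$, together with the degenerate case. If every $c_k$ has zero strength then $T=\infty$ almost surely by Lemma \ref{finiteness1}, both sides of the claimed identity vanish for finite $n$, and the statement holds trivially; otherwise at least one forming time is finite almost surely, so $P(T<\infty)=1$ and the disjointness argument above applies cleanly (in particular $T=\infty$ has probability zero, so it contributes no overlap to the partition). I expect the only real obstacle to be justifying this no-simultaneous-completion fact and the ensuing partition of $\{T<\infty\}$, rather than any analytic estimate.
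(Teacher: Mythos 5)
Your proposal is correct and follows essentially the same route as the paper: Theorem \ref{Haldane1} shows the conditional law $P(T=n\mid T=T^{c_k})$ is independent of $k$, and averaging over the partition $\{T=T^{c_k}\}$ identifies that common value with $P(T=n)$. You simply make explicit two points the paper leaves implicit — that the events $\{T=T^{c_k}\}$ are disjoint and exhaust $\{T<\infty\}$, and that $P(T<\infty)=1$ in the nondegenerate case — which is a reasonable bit of added care but not a different argument.
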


\begin{proof}
By Theorem \ref{Haldane1}, the probability $P(T=n|T=T^{c_k})$ is the same for each $k$. This implies that for each $k$,
\begin{equation}
P(T=n|T=T^{c_k}) = P(T=n).
\end{equation}
Thus we obtain that
\begin{equation}
P(T=n,T=T^{c_k}) = P(T=n|T=T^{c_k})P(T=T^{c_k}) = P(T=n)P(T=T^{c_k}),
\end{equation}
which gives the desired result.
\end{proof}

\begin{remark}\label{independentremark}
The notations are the same as in Corollary \ref{independent1}. Let $\xi$ be a random variable defined by
\begin{equation}
\xi =
\begin{cases}
c_1, &\textrm{if the trejectory of $X$ forms cycle $c_1$ at time $T$}, \\
c_2, &\textrm{if the trejectory of $X$ forms cycle $c_2$ at time $T$}, \\
\cdots, \\
c_r, &\textrm{if the trejectory of $X$ forms cycle $c_r$ at time $T$}.
\end{cases}
\end{equation}
Then Corollary \ref{independent1} shows that $T$ and $\xi$ are independent. This suggests that the forming time of two or more similar cycles is independent of which one of these cycles is formed. This is another important aspect of the generalized Haldane equality.
\end{remark}

In applications, we are more concerned about the symmetry of a cycle and its reversed cycle. Thus we give the following definition.

\begin{definition}
Let $c=(i_1,i_2,\cdots,i_s)$ be a cycle. Then the reversed cycle $c-$ of cycle $c$ is defined as $c-=(i_1,i_s,\cdots,i_2)$. The cycles $c$ and $c-$ are called conjugate.
\end{definition}

For example, the two cycles $c=(1,2,3)$ and $c-=(1,3,2)$ are conjugate. It is easy to see that conjugate cycles must be similar. Now that the generalized Haldane equality holds for similar cycles, it also holds for conjugate cycles. Thus we obtain the following corollary.

\begin{corollary}
Let $c=(i_1,i_2,\cdots,i_s)$ be a cycle. Then \\
(i) for each $n\geq 0$,
\begin{equation}
\frac{P(T^c=n,T^c<T^{c-})}{P(T^{c-}=n,T^{c-}<T^c)} = \frac{P(T^c<T^{c-})}{P(T^{c-}<T^c)} = \frac{p_{i_1i_2}p_{i_2i_3}\cdots p_{i_si_1}}{p_{i_1i_s}p_{i_si_{s-1}}\cdots p_{i_2i_1}};
\end{equation}
(ii) for each $n\geq 0$,
\begin{equation}
P(T^c=n|T^c<T^{c-}) = P(T^{c-}=n|T^{c-}<T^c);
\end{equation}
(iii) for each $n\geq 0$,
\begin{equation}
P(T^c\wedge T^{c-}=n,T^c<T^{c-}) = P(T^c\wedge T^{c-}=n)P(T^c<T^{c-}).
\end{equation}
\end{corollary}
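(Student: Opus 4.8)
The plan is to recognize this corollary as the two-cycle specialization of the machinery already in place. Since conjugate cycles are similar, the pair $\{c_1,c_2\}=\{c,c-\}$ with $r=2$ is a family of similar cycles, so Theorem \ref{Haldane1} and Corollary \ref{independent1} apply verbatim with $T=\min\{T^c,T^{c-}\}=T^c\wedge T^{c-}$ and $\gamma^{c_1}/\gamma^{c_2}=\gamma^c/\gamma^{c-}$. Computing the strength of the reversed cycle $c-=(i_1,i_s,\dots,i_2)$ from the definition gives $\gamma^{c-}=p_{i_1i_s}p_{i_si_{s-1}}\cdots p_{i_2i_1}$, which is exactly the denominator appearing in part (i). So the entire task reduces to translating the statements of Theorem \ref{Haldane1} and Corollary \ref{independent1}, which are phrased in terms of the non-strict event $\{T=T^{c_k}\}$, into the strict-inequality events $\{T^c<T^{c-}\}$ and $\{T^{c-}<T^c\}$ used here.

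The observation that makes this translation work is that at each time step the derived chain passes from $Y_{n-1}$ to $Y_n$ and forms at most one cycle; hence, whenever $c\neq c-$ (that is, $s\geq 3$), the events $\{T^c=n\}$ and $\{T^{c-}=n\}$ are disjoint, and $T^c\neq T^{c-}$ whenever both are finite. First I would use this to show that for finite $n$ we have $\{T^c=n,\,T=T^c\}=\{T^c=n,\,T^c<T^{c-}\}$: if $T^c=n$ and $T=T^c$, then $T^{c-}\geq n$ but $T^{c-}\neq n$, forcing $T^{c-}>n$; the reverse inclusion is immediate. The same reasoning identifies $\{T=n,\,T=T^c\}$ with $\{T^c\wedge T^{c-}=n,\,T^c<T^{c-}\}$, which will be used for part (iii).

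With this conversion in hand, part (i) follows by substituting $(c_1,c_2)=(c,c-)$ into \eqref{quotient1}, so that the left ratio becomes $P(T^c=n,T^c<T^{c-})/P(T^{c-}=n,T^{c-}<T^c)$ with value $\gamma^c/\gamma^{c-}$. For the middle ratio I would note that when $\gamma^c\gamma^{c-}>0$ Lemma \ref{finiteness1} gives $T^c,T^{c-}<\infty$ almost surely, so the finite-tie exclusion yields $P(T=T^c)=P(T^c<T^{c-})$ and likewise for $c-$, while the degenerate cases $\gamma^c\gamma^{c-}=0$ reduce to the trivial $0/0$ or $0/(\text{positive})$ situations covered by the stated conventions. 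Part (ii) is then obtained exactly as Theorem \ref{Haldane1}(ii) is deduced from Theorem \ref{Haldane1}(i), by cross-multiplying the two equalities of part (i). Part (iii) follows by the same substitution into Corollary \ref{independent1} together with the two identifications established above.

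The argument is essentially routine once this reduction is set up; the only genuinely non-mechanical point, and thus the main (if modest) obstacle, is the strict-versus-non-strict bookkeeping, namely justifying that ties $T^c=T^{c-}$ occur only at $\infty$ and hence may be discarded for finite $n$ and absorbed into the conventions otherwise. The degenerate case $s\leq 2$, where $c=c-$ and every strict-inequality probability vanishes, is handled trivially by the same conventions.
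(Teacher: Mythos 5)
Your proposal is correct and follows exactly the paper's route: the paper's proof is the one-line observation that the corollary follows directly from Theorem \ref{Haldane1} and Corollary \ref{independent1} applied to the similar pair $\{c,c-\}$. Your additional bookkeeping (identifying $\{T^c=n,\,T=T^c\}$ with $\{T^c=n,\,T^c<T^{c-}\}$ via the no-finite-ties observation, and computing $\gamma^{c-}$) only fills in details the paper leaves implicit.
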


\begin{proof}
This corollary follows directly from Theorem \ref{Haldane1} and Corollary \ref{independent1}.
\end{proof}

\begin{remark}
The above corollary generalizes the so-called generalized Haldane equality (see \eqref{Haldaneprep} in Section \ref{introduction}) found by biophysicists in three-state Markov chains \cite{qian2006generalized, ge2008waiting, ge2012stochastic}.
\end{remark}

\section{Generalizations of the generalized Haldane equality}
We have seen that the most important intermediate step in the proof of the generalized Haldane equality is Lemma \ref{decomposition}, in which we decompose the probability $P_i(T^{c_k}=n,T=T^{c_k})$ into an invalid part and a valid part. However, we notice that the conditions stated in Lemma \ref{decomposition} are much weaker than those stated in Theorem \ref{Haldane1}. This suggests that the generalized Haldane equality can be further generalized, as stated in the following theorem.

\begin{theorem}\label{Haldane2}
Let $c_1,c_2,\cdots,c_r$ be a family of cycles passing through a common state $i$. Let $T = \min\{T^{c_1},$ $T^{c_2},\cdots,T^{c_r}\}$. Assume that $c_k$ and $c_l$ are similar for some two indices $1\leq k,l\leq r$. Then \\
(i) for each $n\geq 1$,
\begin{equation}
\frac{P_i(T^{c_k}=n,T=T^{c_k})}{P_i(T^{c_l}=n,T=T^{c_l})} = \frac{P_i(T=T^{c_k})}{P_i(T=T^{c_l})} = \frac{\gamma^{c_k}}{\gamma^{c_l}};
\end{equation}
(ii) for each $n\geq 1$,
\begin{equation}
P_i(T^{c_k}=n|T=T^{c_k}) = P_i(T^{c_l}=n|T=T^{c_l}).
\end{equation}
\end{theorem}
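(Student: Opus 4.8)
The plan is to observe that Theorem~\ref{Haldane2} follows almost immediately from Lemma~\ref{decomposition}, whose hypotheses---that $c_1,c_2,\cdots,c_r$ merely pass through a common state $i$---are precisely the weaker assumptions made here, with no requirement that the whole family be similar. This is the crucial point: the decomposition of $P_i(T^{c_k}=n,T=T^{c_k})$ into an invalid part $F^i_n$ and a valid part $\gamma^{c_k}$ goes through without using similarity at all, so the hard combinatorial work has already been done.

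First I would write $c_k=(i,i^k_2,\cdots,i^k_s)$ and $c_l=(i,i^l_2,\cdots,i^l_s)$ and apply Lemma~\ref{decomposition} to each cycle, obtaining
\[
P_i(T^{c_k}=n,T=T^{c_k}) = F^i_n(i^k_2,\cdots,i^k_s)\gamma^{c_k}, \qquad P_i(T^{c_l}=n,T=T^{c_l}) = F^i_n(i^l_2,\cdots,i^l_s)\gamma^{c_l}.
\]
Next I would invoke the similarity of $c_k$ and $c_l$: since both cycles pass through $i$ and share the same vertex set, the states $i^l_2,\cdots,i^l_s$ form a permutation of $i^k_2,\cdots,i^k_s$ (each list is the common state set with $i$ deleted). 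The permutation invariance of $F^i_n$ supplied by Lemma~\ref{decomposition} then gives $F^i_n(i^k_2,\cdots,i^k_s)=F^i_n(i^l_2,\cdots,i^l_s)$ for every $n$. Taking the ratio of the two displayed identities yields the first equality in (i), and summing each identity over $n\geq 1$ yields the second; part (ii) follows by dividing $P_i(T^{c_k}=n,T=T^{c_k})$ by $P_i(T=T^{c_k})$, whereupon the factor $\gamma^{c_k}$ cancels and one is left with the permutation-symmetric quantity $F^i_n(i^k_2,\cdots,i^k_s)/\sum_m F^i_m(i^k_2,\cdots,i^k_s)$, which is identical for $c_k$ and $c_l$.

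There is no genuinely hard step; all the analytic content was packaged into Lemma~\ref{decomposition}, and the present statement is essentially a repackaging that isolates two of the cycles and loosens the similarity hypothesis. The only care needed is bookkeeping: verifying that the hypotheses of that lemma hold (the cycles share the vertex $i$) and that similarity of $c_k$ and $c_l$ translates exactly into a permutation of their non-$i$ vertices. Because we start from the fixed state $i$, no conditioning on the hitting time $\tau=\inf\{n\geq 0:X_n\in H\}$ is required, so the additional initial-distribution argument used in the proof of Theorem~\ref{Haldane1} can be omitted entirely.
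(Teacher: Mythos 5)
Your proposal is correct and matches the paper's own proof essentially step for step: both apply Lemma \ref{decomposition} to $c_k$ and $c_l$, use the permutation invariance of $F^i_n$ together with the similarity of just these two cycles, take the ratio for fixed $n$, and sum over $n$ for the second equality. The observation that the fixed initial state $i$ makes the initial-distribution argument of Theorem \ref{Haldane1} unnecessary is exactly the point the paper makes in its remark contrasting the two theorems.
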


\begin{proof}
It is easy to see that (ii) is a direct corollary of (i). Thus we only need to prove (i). Write $c_k = (i,i^k_2,\cdots,i^k_s)$ and $c_l = (i,i^l_2,\cdots,i^l_s)$. By Lemma \ref{decomposition}, we have
\begin{equation}
\begin{split}
P_i(T^{c_k}=n,T=T^{c_k}) &= F^i_n(i^k_2,\cdots,i^k_s)\gamma^{c_k}, \\
P_i(T^{c_l}=n,T=T^{c_l}) &= F^i_n(i^l_2,\cdots,i^l_s)\gamma^{c_l},
\end{split}
\end{equation}
where $F^i_n(i^k_2,\cdots,i^k_s)$ is invariant under any permutation of $i^k_2,\cdots,i^k_s$. Since $c_k$ and $c_l$ are similar, $i^k_2,\cdots,i^k_s$ can be transformed into $i^l_2,\cdots,i^l_s$ by a permutation. This shows that
\begin{equation}
\frac{P_i(T^{c_k}=n,T=T^{c_k})}{P_i(T^{c_l}=n,T=T^{c_l})} = \frac{\gamma^{c_k}}{\gamma^{c_l}}.
\end{equation}
Since the above equation holds for each $n$, we obtain the desired result.
\end{proof}

\begin{remark}
There are two crucial differences between Theorem \ref{Haldane1} and Theorem \ref{Haldane2}. The first difference is that in Theorem \ref{Haldane1}, we require that the cycles $c_1,c_2,\cdots,c_r$ are similar, while in Theorem \ref{Haldane2}, we only require that the cycles $c_1,c_2,\cdots,c_r$ pass through a common state. The second difference is that Theorem \ref{Haldane1} holds for Markov chains starting from any initial distributions, while Theorem \ref{Haldane2} only holds for Markov chains starting from a particular state.
\end{remark}

Now that the above theorem holds for similar cycles, it also holds for conjugate cycles. Thus we obtain the following corollary.

\begin{corollary}
Let $c_1,c_2,\cdots,c_r$ be a family of cycles passing through a common state $i$. Let $T = \min\{T^{c_1},$ $T^{c_1-},\cdots,T^{c_r},T^{c_r-}\}$. Then \\
(i) for each $n\geq 1$ and each $1\leq k\leq r$,
\begin{equation}
\frac{P_i(T^{c_k}=n,T=T^{c_k})}{P_i(T^{c_k-}=n,T=T^{c_k-})} = \frac{P_i(T=T^{c_k})}{P_i(T=T^{c_k-})} = \frac{\gamma^{c_k}}{\gamma^{c_k-}};
\end{equation}
(ii) for each $n\geq 1$ and each $1\leq k\leq r$,
\begin{equation}
P_i(T^{c_k}=n|T=T^{c_k}) = P_i(T^{c_k-}=n|T=T^{c_k-}).
\end{equation}
\end{corollary}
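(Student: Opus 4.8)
The plan is to recognize this corollary as a direct application of Theorem \ref{Haldane2}, obtained by enlarging the given family of cycles so that each cycle appears alongside its reversal. First I would invoke the observation, already recorded in the excerpt, that a cycle $c$ and its reversed cycle $c-$ are conjugate and hence similar: since $c=(i_1,i_2,\cdots,i_s)$ and $c-=(i_1,i_s,\cdots,i_2)$ pass through exactly the same set of states $\{i_1,\cdots,i_s\}$, they satisfy the conditions of Definition \ref{similar}. In particular, $c_k-$ passes through the common state $i$ whenever $c_k$ does, so reversing the cycles preserves the hypothesis that all cycles share the state $i$.

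Next I would form the enlarged family
\begin{equation*}
c_1,\,c_1-,\,c_2,\,c_2-,\,\cdots,\,c_r,\,c_r-,
\end{equation*}
consisting of $2r$ cycles, all of which pass through the common state $i$. The forming time of this enlarged family is precisely $T=\min\{T^{c_1},T^{c_1-},\cdots,T^{c_r},T^{c_r-}\}$, which matches the quantity $T$ appearing in the statement of the corollary. Because $c_k$ and $c_k-$ are similar for each fixed $k$, the hypotheses of Theorem \ref{Haldane2} are met by this enlarged family, with the distinguished similar pair taken to be $c_k$ and $c_k-$.

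Applying Theorem \ref{Haldane2}(i) to the enlarged family then yields
\begin{equation*}
\frac{P_i(T^{c_k}=n,T=T^{c_k})}{P_i(T^{c_k-}=n,T=T^{c_k-})} = \frac{P_i(T=T^{c_k})}{P_i(T=T^{c_k-})} = \frac{\gamma^{c_k}}{\gamma^{c_k-}},
\end{equation*}
which is exactly part (i), while Theorem \ref{Haldane2}(ii) gives part (ii) at once by dividing numerator and denominator. There is essentially no substantive obstacle here beyond bookkeeping: the only points requiring care are verifying that enlarging the family leaves the relevant forming time $T$ unchanged and that each reversed cycle $c_k-$ still passes through $i$, both of which are immediate from the definitions. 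All the genuine work has already been carried out in Lemma \ref{decomposition} and Theorem \ref{Haldane2}, so the proof reduces to identifying the correct family to which those results apply.
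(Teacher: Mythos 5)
Your proposal is correct and matches the paper's own argument: the paper likewise derives this corollary by applying Theorem \ref{Haldane2} to the enlarged family $c_1,c_1-,\cdots,c_r,c_r-$ of $2r$ cycles through $i$, using that each conjugate pair $c_k,c_k-$ is similar. Your additional bookkeeping remarks (that $c_k-$ still passes through $i$ and that $T$ is the forming time of the enlarged family) are exactly the implicit checks the paper leaves to the reader.
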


\begin{proof}
This corollary follows directly from Theorem \ref{Haldane2}.
\end{proof}

\begin{remark}
We have seen that the generalized Haldane equality (Theorem \ref{Haldane1}) has many variations which are closely related. These results, which include Theorems 1-2 and Corollaries 1-3, will be collectively referred to as the generalized Haldane ``equalities" in the following discussion.
\end{remark}

\section{Generalized Haldane equalities for continuous-time Markov chains}
In this section, we shall state and prove the generalized Haldane equalities for continuous-time Markov chains. Let $X=(X_t)_{t\geq 0}$ be an irreducible and recurrent continuous-time Markov chain with denumerable state space $\mathbb{S}$ and transition rate matrix $Q=(q_{ij})_{i,j\in\mathbb{S}}$.

Before we state the generalized Haldane equality, we give the definition of the strengths of cycles for continuous-time Markov chains.

\begin{definition}\label{strength}
Let $c=(i_1,i_2,\cdots,i_s)$ be a cycle. Then the strength $\gamma^c$ of cycle $c$ is defined as
\begin{equation}
\gamma^c = q_{i_1i_2}q_{i_2i_3}\cdots q_{i_si_1}.
\end{equation}
\end{definition}

The generalized Haldane equality, which characterizes the symmetry of the forming times of a family of similar cycles, is stated in the following theorem.

\begin{theorem}\label{Haldanecontinuous1}
Let $c_1,c_2,\cdots,c_r$ be a family of similar cycles. Let $T = \min\{T^{c_1},T^{c_2},\cdots,T^{c_r}\}$. Then \\
(i) for each $t>0$ and any $1\leq k,l \leq r$,
\begin{equation}\label{quotient2}
\frac{P(T^{c_k}\leq t,T=T^{c_k})}{P(T^{c_l}\leq t,T=T^{c_l})} = \frac{P(T=T^{c_k})}{P(T=T^{c_l})} = \frac{\gamma^{c_k}}{\gamma^{c_l}};
\end{equation}
(ii) for each $t>0$,
\begin{equation}\label{conditional2}
P(T^{c_1}\leq t|T=T^{c_1}) = P(T^{c_2}\leq t|T=T^{c_2}) = \cdots = P(T^{c_r}\leq t|T=T^{c_r}).
\end{equation}
\end{theorem}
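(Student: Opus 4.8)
The plan is to reduce the continuous-time statement to the discrete-time machinery already developed, the only genuinely new ingredient being the holding times, which I would handle through Laplace transforms. Recall from Definition \ref{formingtime} that $T^{c_k}=J_{\bar{T}^{c_k}}$, so the event $\{T=T^{c_k}\}$ (which cycle forms first) is measurable with respect to the embedded chain $\bar{X}$ alone, while on that event $T^{c_k}$ is the sum of the holding times spent in the states visited by $\bar{X}$ up to step $\bar{T}^{c_k}$. The second equality in (i) is then almost immediate: applying Theorem \ref{Haldane1}(i) to $\bar{X}$ (irreducible and recurrent, as noted after Lemma \ref{finiteness2}) gives $P(T=T^{c_k})/P(T=T^{c_l})=\bar{\gamma}^{c_k}/\bar{\gamma}^{c_l}$, where $\bar{\gamma}^{c}$ is the strength formed from the embedded probabilities $\bar{p}_{ij}=q_{ij}/q_i$. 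Since $c_k$ and $c_l$ are similar they pass through the same vertex set, so the factors $\prod q_{i_m}$ in the two embedded strengths cancel and $\bar{\gamma}^{c_k}/\bar{\gamma}^{c_l}=\gamma^{c_k}/\gamma^{c_l}$.

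The first equality, which carries the real content because it involves the continuous time $T^{c_k}\le t$, is where I would bring in the holding times. For $\lambda>0$ set $\phi_k(\lambda)=E_i[e^{-\lambda T^{c_k}};T=T^{c_k}]$. Decomposing over the embedded path $i=w_0\to w_1\to\cdots\to w_n$, and using that, conditional on the path, the holding times are independent exponentials with rates $q_{w_0},\dots,q_{w_{n-1}}$ (each contributing a Laplace factor $q_{w_m}/(q_{w_m}+\lambda)$ that combines with $\bar{p}_{w_m w_{m+1}}=q_{w_m w_{m+1}}/q_{w_m}$), I would show that every step collapses into a single clean factor, so that
\[
\phi_k(\lambda)=\sum_{\text{paths forming }c_k\text{ first}}\ \prod_{m=0}^{n-1}\frac{q_{w_m w_{m+1}}}{q_{w_m}+\lambda}.
\]
The crucial observation is that the right-hand side is precisely the probability that the ``$\lambda$-killed'' discrete chain with substochastic transition matrix $P^\lambda=(q_{ij}/(q_i+\lambda))_{i,j}$ forms $c_k$ before being killed and before any other $c_j$.

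I would then apply Lemma \ref{decomposition} to $P^\lambda$. Its proof, together with those of Lemmas \ref{basic}--\ref{invariance}, is a purely algebraic manipulation of taboo probabilities that never uses stochasticity (only finiteness of the series, which holds here because $\lambda>0$), so it yields $\phi_k(\lambda)=F^{i,\lambda}(i^k_2,\dots,i^k_s)\,\gamma^{c_k}_\lambda$, where $F^{i,\lambda}$ is invariant under permutations of $i^k_2,\dots,i^k_s$ and $\gamma^{c_k}_\lambda=\prod_{\text{edges of }c_k}q_{\cdot\cdot}/(q_\cdot+\lambda)$. For similar $c_k,c_l$ the symmetric factors $F^{i,\lambda}$ coincide (one index list is a permutation of the other); moreover the source states of the edges of a cycle are exactly its vertices, so the denominators $\prod_j(q_j+\lambda)$ agree for $c_k$ and $c_l$ and $\gamma^{c_k}_\lambda/\gamma^{c_l}_\lambda=\gamma^{c_k}/\gamma^{c_l}$. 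Hence $\phi_k(\lambda)/\phi_l(\lambda)=\gamma^{c_k}/\gamma^{c_l}$ for every $\lambda>0$. Since $\phi_k$ is the Laplace transform of the sub-probability measure $P_i(T^{c_k}\in\cdot\,,T=T^{c_k})$, uniqueness of Laplace transforms upgrades this constant ratio to $P_i(T^{c_k}\le t,T=T^{c_k})=(\gamma^{c_k}/\gamma^{c_l})\,P_i(T^{c_l}\le t,T=T^{c_l})$ for all $t$, with $t\to\infty$ recovering the second equality. To pass from a fixed start $i\in H$ to an arbitrary initial law, I would condition on the first hitting time $\sigma$ of the common vertex set $H$ and on $X_\sigma$ (no cycle can form before $\sigma$, since all $c_j$ pass through $H$) and invoke the strong Markov property as $E[e^{-\lambda T^{c_k}};T=T^{c_k}]=\sum_{j\in H}E[e^{-\lambda\sigma};X_\sigma=j]\,E_j[e^{-\lambda T^{c_k}};T=T^{c_k}]$, where the $j$-independent factor $\gamma^{c_k}/\gamma^{c_l}$ pulls out cleanly.

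I expect the main obstacle to be the holding-time bookkeeping: verifying that the Laplace transform genuinely collapses the joint law of the embedded path and its holding times into the single substochastic matrix $P^\lambda$, and checking that Lemmas \ref{basic}--\ref{decomposition} survive verbatim for a substochastic (rather than stochastic) matrix, with all series converging by virtue of $\lambda>0$. The Laplace-inversion step and the extension to general initial laws are then routine, the latter mirroring the discrete-time argument in the proof of Theorem \ref{Haldane1}.
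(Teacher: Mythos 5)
Your proof is correct, but it takes a genuinely different route from the paper's. The paper fixes $t>0$ and discretizes \emph{time}: it considers the skeleton chains $Y^m_n=X_{nt/m}$, shows that the event $\{T^{c_k}\le t,\,T=T^{c_k}\}$ is the set-theoretic limit of the corresponding skeleton events $\{T^{m,c_k}\le m,\,T^m=T^{m,c_k}\}$, applies the discrete-time Theorem \ref{Haldane1} to each $Y^m$ (whose cycle strengths are products of $p_{\cdot\cdot}(t/m)$), and passes to the limit using $p_{ij}(t/m)\sim q_{ij}t/m$, the powers of $t/m$ cancelling because similar cycles have equal length. You instead discretize \emph{the trajectory} via the embedded chain and integrate out the holding times through the resolvent: your identification of $E_i[e^{-\lambda T^{c_k}};T=T^{c_k}]$ with a first-formation probability for the substochastic matrix $\bigl(q_{ij}/(q_i+\lambda)\bigr)$ is correct; Lemmas \ref{basic}--\ref{decomposition} are indeed purely algebraic path identities valid for any nonnegative substochastic kernel (all sums at fixed $n$ are finite, and $\sum_n F^{i,\lambda}_n<\infty$ because the total mass is at most one); and the observation that both the permutation-invariant factor and the denominator $\prod_j(q_j+\lambda)$ depend only on the common vertex set is exactly what collapses the ratio to $\gamma^{c_k}/\gamma^{c_l}$. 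As for what each approach buys: the paper reuses Theorem \ref{Haldane1} as a black box but must justify a somewhat delicate convergence of events together with the rate asymptotics, whereas your argument avoids that entirely and yields the slightly stronger conclusion that the sub-probability laws $P_i(T^{c_k}\in\cdot\,,T=T^{c_k})$ and $(\gamma^{c_k}/\gamma^{c_l})\,P_i(T^{c_l}\in\cdot\,,T=T^{c_l})$ coincide as measures, at the cost of reopening the taboo-probability lemmas to check substochasticity and invoking Laplace inversion. Your reduction from a general initial law to a start in the common vertex set mirrors the paper's own argument in the proof of Theorem \ref{Haldane1} and is fine.
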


\begin{proof}
It is easy to see that (ii) is a direct corollary of (i). Thus we only need to prove (i). Let $t>0$ be a fixed time. For each $m\geq 1$, let
\begin{equation}
Y^m_n = X_{nt/m}.
\end{equation}
Then $Y^m = (Y^m_n)_{n\geq 0}$ is an irreducible and recurrent discrete-time Markov chain with transition probability matrix $P_m = (p_{ij}(t/m))_{i,j\in\mathbb{S}}$, where $p_{ij}(t/m) = P_i(X_{t/m}=j)$. Let $T^{m,c}$ be the forming time of cycle $c$ by $Y^m$. Let $T^m = \min\{T^{m,c_1},T^{m,c_2},\cdots,T^{m,c_r}\}$.

Since $X$ is irreducible and recurrent, it is non-explosive, which implies that $X$ can only jump finite times before time $t$. Thus when $m$ is sufficiently large, $t/m$ is less than any of the waiting times of $X$ before time $t$. This means that the occurrence of the event $\{T^{c_k}\leq t,T=T^{c_k}\}$ implies the occurrence of the event $\{T^{m,c_k}\leq m,T^m=T^{m,c_k}\}$ when $m$ is sufficiently large. Thus we obtain that
\begin{equation}\label{left}
\{T^{c_k}\leq t,T=T^{c_k}\} \subset \bigcup_{N=1}^\infty\bigcap_{m=N}^\infty\{T^{m,c_k}\leq m,T^m=T^{m,c_k}\}.
\end{equation}
Similarly, it is easy to see that the occurrence of the event $\{T^{c_k}>t\}$ implies the occurrence of the event $\{T^{m,c_k}>m\}$ when $m$ is sufficiently large and the occurrence of the event $\{T<T^{c_k}\leq t\}$ implies the occurrence of the event $\{T^m<T^{m,c_k}\leq m\}$ when $m$ is sufficiently large. Thus we obtain that
\begin{equation}
\begin{split}
& \{T^{c_k}\leq t,T=T^{c_k}\}^c = \{T^{c_k}>t\}\cup\{T<T^{c_k}\leq t\} \\
&\subset \left(\bigcup_{N=1}^\infty\bigcap_{m=N}^\infty\{T^{m,c_k}>m\}\right) \bigcup \left(\bigcup_{N=1}^\infty\bigcap_{m=N}^\infty\{T^m<T^{m,c_k}\leq m\}\right) \\
&\subset \bigcup_{N=1}^\infty\bigcap_{m=N}^\infty\{T^{m,c_k}>m\}\cup\{T^m<T^{m,c_k}\leq m\} \\
&= \bigcup_{N=1}^\infty\bigcap_{m=N}^\infty\{T^{m,c_k}\leq m,T^m=T^{m,c_k}\}^c.
\end{split}
\end{equation}
This shows that
\begin{equation}\label{right}
\bigcap_{N=1}^\infty\bigcup_{m=N}^\infty\{T^{m,c_k}\leq m,T^m=T^{m,c_k}\} \subset \{T^{c_k}\leq t,T=T^{c_k}\}.
\end{equation}
By \eqref{left} and \eqref{right}, we have
\begin{equation}
\{T^{c_k}\leq t,T=T^{c_k}\} = \lim_{m\rightarrow\infty}\{T^{m,c_k}\leq m,T^m=T^{m,c_k}\}.
\end{equation}
By the dominated convergence theorem, we obtain that
\begin{equation}
P(T^{c_k}\leq t,T=T^{c_k}) = \lim_{m\rightarrow\infty}P(T^{m,c_k}\leq m,T^m=T^{m,c_k}).
\end{equation}
Write $c_k = (i^k_1,i^k_2,\cdots,i^k_s)$ and $c_l = (i^l_1,i^l_2,\cdots,i^l_s)$. By Theorem \ref{Haldane1}, we have
\begin{equation}
\begin{split}
& \frac{P(T^{c_k}\leq t,T=T^{c_k})}{P(T^{c_l}\leq t,T=T^{c_l})} = \lim_{m\rightarrow\infty}\frac{P(T^{m,c_k}\leq m,T^m=T^{m,c_k})}{P(T^{m,c_l}\leq m,T^m=T^{m,c_l})} \\
&= \lim_{m\rightarrow\infty}\frac{p_{i^k_1i^k_2}(t/m)p_{i^k_2i^k_3}(t/m)\cdots p_{i^k_si^k_1}(t/m)}{p_{i^l_1i^l_2}(t/m)p_{i^l_2i^l_3}(t/m)\cdots p_{i^l_si^l_1}(t/m)} = \frac{q_{i^k_1i^k_2}q_{i^k_2i^k_3}\cdots q_{i^k_si^k_1}}{q_{i^l_1i^l_2}q_{i^l_2i^l_3}\cdots q_{i^l_si^l_1}}
= \frac{\gamma^{c_k}}{\gamma^{c_l}}.
\end{split}
\end{equation}
Since the above equation holds for each $t$, we obtain the desired result.
\end{proof}

Using the techniques in the proof of Theorem \ref{Haldanecontinuous1}, we can obtain the following results parallel to those for discrete-time Markov chains. The proofs of the following results are all omitted.

\begin{corollary}
Let $c_1,c_2,\cdots,c_r$ be a family of similar cycles. Let $T = \min\{T^{c_1},T^{c_2},\cdots,T^{c_r}\}$. Then for each $t>0$ and each $1\leq k\leq r$,
\begin{equation}
P(T\leq t,T=T^{c_k}) = P(T\leq t)P(T=T^{c_k}).
\end{equation}
\end{corollary}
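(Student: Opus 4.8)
The plan is to mirror the discrete-time argument of Corollary~\ref{independent1}, replacing Theorem~\ref{Haldane1} by its continuous-time counterpart Theorem~\ref{Haldanecontinuous1}. The key observation is that on the event $\{T=T^{c_k}\}$ one has $\{T^{c_k}\leq t\}=\{T\leq t\}$, so that $P(T^{c_k}\leq t\mid T=T^{c_k})=P(T\leq t\mid T=T^{c_k})$. Hence Theorem~\ref{Haldanecontinuous1}(ii), which asserts that $P(T^{c_k}\leq t\mid T=T^{c_k})$ does not depend on $k$, immediately tells us that the conditional probability $P(T\leq t\mid T=T^{c_k})$ is the same for every $1\leq k\leq r$; I would denote this common value by $g(t)$.

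Next I would identify $g(t)$ with the unconditional probability $P(T\leq t)$. For this I would use that the events $\{T=T^{c_1}\},\dots,\{T=T^{c_r}\}$ form a partition of $\{T<\infty\}$ up to a null set: at any jump time the embedded chain completes at most one cycle, so these events are pairwise disjoint, and since $X$ is irreducible and recurrent (and at least one strength $\gamma^{c_k}$ is positive) we have $T<\infty$ almost surely, whence $\sum_{k}P(T=T^{c_k})=1$. Summing the decomposition $P(T\leq t,T=T^{c_k})=g(t)\,P(T=T^{c_k})$ over $k$ then gives $P(T\leq t)=g(t)\sum_k P(T=T^{c_k})=g(t)$.

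Finally, substituting $g(t)=P(T\leq t)$ back into the identity $P(T\leq t,T=T^{c_k})=g(t)\,P(T=T^{c_k})$ yields the claimed relation $P(T\leq t,T=T^{c_k})=P(T\leq t)\,P(T=T^{c_k})$ for each $k$ and each $t>0$.

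The core chain of equalities is entirely analogous to Corollary~\ref{independent1}, so I expect no real difficulty there. The only point deserving care---and the main, albeit minor, obstacle---is justifying that $\{T=T^{c_k}\}_{k}$ is genuinely a partition: one must rule out the simultaneous completion of two distinct cycles at the same jump time and handle the degenerate case in which every $\gamma^{c_k}=0$, where $T=\infty$ almost surely by Lemma~\ref{finiteness2} and both sides vanish, so that the identity holds trivially.
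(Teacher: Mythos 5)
Your argument is correct and is essentially the paper's own route: the paper omits this proof but indicates it parallels Corollary~\ref{independent1}, whose argument is exactly your chain (equality of the conditional probabilities across $k$ via the generalized Haldane equality, then identification of the common value with $P(T\leq t)$). You additionally make explicit the partition/degeneracy details that the paper leaves implicit, which is a welcome tightening rather than a deviation.
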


\begin{corollary}
Let $c=(i_1,i_2,\cdots,i_s)$ be a cycle. Then \\
(i) for each $t>0$,
\begin{equation}
\frac{P(T^c\leq t,T^c<T^{c-})}{P(T^{c-}\leq t,T^{c-}<T^c)} = \frac{P(T^c<T^{c-})}{P(T^{c-}<T^c)} = \frac{q_{i_1i_2}q_{i_2i_3}\cdots q_{i_si_1}}{q_{i_1i_s}q_{i_si_{s-1}}\cdots q_{i_2i_1}};
\end{equation}
(ii) for each $t>0$,
\begin{equation}
P(T^c\leq t|T^c<T^{c-}) = P(T^{c-}\leq t|T^{c-}<T^c);
\end{equation}
(iii) for each $t>0$,
\begin{equation}
P(T^c\wedge T^{c-}\leq t,T^c<T^{c-}) = P(T^c\wedge T^{c-}\leq t)P(T^c<T^{c-}).
\end{equation}
\end{corollary}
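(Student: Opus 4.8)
The plan is to recognize the corollary as the specialization of the generalized Haldane equalities for continuous-time Markov chains (Theorem \ref{Haldanecontinuous1} and the preceding corollary of this section) to the two-member family consisting of a cycle and its conjugate. First I would note that $c=(i_1,i_2,\cdots,i_s)$ and $c-=(i_1,i_s,\cdots,i_2)$ pass through the same state set $\{i_1,\cdots,i_s\}$ and have equal length $s$, so by Definition \ref{similar} they are similar. When $s=2$ we have $c=c-$ and every claim is trivial, so I would assume $s\geq 3$, making $c$ and $c-$ genuinely distinct similar cycles.

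Next I would apply Theorem \ref{Haldanecontinuous1} with $r=2$ to the family $\{c,c-\}$, so that $T=\min\{T^c,T^{c-}\}=T^c\wedge T^{c-}$. The one notational bridge required is the identification $\{T=T^c\}=\{T^c<T^{c-}\}$ (and symmetrically for $c-$). Assuming both strengths are positive, Lemma \ref{finiteness2} gives $T^c,T^{c-}<\infty$ almost surely; moreover the derived chain discards exactly one cycle at each completion instant, so $c$ and $c-$ can never be formed simultaneously, whence $T^c\neq T^{c-}$ and $\{T=T^c\}=\{T^c<T^{c-}\}$. Substituting the strengths $\gamma^c=q_{i_1i_2}q_{i_2i_3}\cdots q_{i_si_1}$ and $\gamma^{c-}=q_{i_1i_s}q_{i_si_{s-1}}\cdots q_{i_2i_1}$ from Definition \ref{strength}, part (i) is then immediate from Theorem \ref{Haldanecontinuous1}(i), and part (ii) from Theorem \ref{Haldanecontinuous1}(ii).

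For part (iii) I would invoke the preceding corollary of this section, which asserts $P(T\leq t,T=T^{c_k})=P(T\leq t)P(T=T^{c_k})$ for a family of similar cycles. Applied to $\{c,c-\}$ with $T=T^c\wedge T^{c-}$ and the index corresponding to $c$, and rewriting $\{T=T^c\}$ as $\{T^c<T^{c-}\}$, this reads $P(T^c\wedge T^{c-}\leq t,T^c<T^{c-})=P(T^c\wedge T^{c-}\leq t)P(T^c<T^{c-})$, which is precisely the claimed identity.

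I do not anticipate any substantive obstacle, since the whole argument merely transcribes already-proved theorems to a two-cycle family. The only points deserving a word of care are the almost-sure distinctness of $T^c$ and $T^{c-}$ that underlies $\{T=T^c\}=\{T^c<T^{c-}\}$, and the degenerate cases in which one or both of $\gamma^c,\gamma^{c-}$ vanish; the latter are covered by the triviality conventions recorded in the remarks following Theorem \ref{Haldane1}.
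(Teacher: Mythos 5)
Your proposal is correct and matches the paper's route exactly: the paper derives this corollary by specializing Theorem \ref{Haldanecontinuous1} and the preceding independence corollary to the conjugate pair $\{c,c-\}$, which are similar cycles, with the same identification $\{T=T^{c}\}=\{T^{c}<T^{c-}\}$. Your extra remarks on the degenerate cases and the impossibility of simultaneous formation are sensible but not points the paper dwells on.
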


\begin{theorem}\label{Haldanecontinuous2}
Let $c_1,c_2,\cdots,c_r$ be a family of cycles passing through a common state $i$. Let $T = \min\{T^{c_1},$ $T^{c_2},\cdots,T^{c_r}\}$. Assume that $c_k$ and $c_l$ are similar for some two indices $1\leq k,l\leq r$. Then \\
(i) for each $t>0$,
\begin{equation}
\frac{P_i(T^{c_k}\leq t,T=T^{c_k})}{P_i(T^{c_l}\leq t,T=T^{c_l})} = \frac{P_i(T=T^{c_k})}{P_i(T=T^{c_l})} = \frac{\gamma^{c_k}}{\gamma^{c_l}};
\end{equation}
(ii) for each $t>0$,
\begin{equation}
P_i(T^{c_k}\leq t|T=T^{c_k}) = P_i(T^{c_l}\leq t|T=T^{c_l}).
\end{equation}
\end{theorem}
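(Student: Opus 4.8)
The plan is to imitate the proof of Theorem \ref{Haldanecontinuous1} almost verbatim, replacing the appeal to Theorem \ref{Haldane1} by an appeal to Theorem \ref{Haldane2}, since the latter is exactly the discrete-time statement that holds under the present weaker hypotheses (cycles through a common state, with only one similar pair $c_k,c_l$) and for a fixed initial state. Concretely, I would fix $t>0$ and, for each $m\geq 1$, introduce the time-discretized chain $Y^m_n = X_{nt/m}$, which is an irreducible recurrent discrete-time Markov chain with transition matrix $P_m=(p_{ij}(t/m))_{i,j\in\mathbb{S}}$. Writing $T^{m,c}$ for the forming time of cycle $c$ by $Y^m$ and $T^m=\min\{T^{m,c_1},\cdots,T^{m,c_r}\}$, I would reproduce the non-explosiveness argument already carried out for Theorem \ref{Haldanecontinuous1}: since $X$ makes only finitely many jumps before time $t$, for all large $m$ the mesh $t/m$ is finer than every waiting time of $X$ on $[0,t]$, so the continuous-time cycle-forming events up to time $t$ coincide with the discrete cycle-forming events up to step $m$. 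This yields the set identity
\begin{equation*}
\{T^{c_k}\leq t,\,T=T^{c_k}\} = \lim_{m\rightarrow\infty}\{T^{m,c_k}\leq m,\,T^m=T^{m,c_k}\},
\end{equation*}
whence, by the dominated convergence theorem, $P_i(T^{c_k}\leq t,T=T^{c_k}) = \lim_{m\rightarrow\infty}P_i(T^{m,c_k}\leq m,T^m=T^{m,c_k})$.

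The second step is to apply the discrete-time result at each level $m$. Since $c_1,\cdots,c_r$ all pass through the common state $i$ and $c_k,c_l$ are similar, the same is true for these cycles viewed in $Y^m$, and $Y^m$ is started from $X_0=i$, so the hypotheses of Theorem \ref{Haldane2} are met exactly. Applying that theorem to $Y^m$ gives, writing $c_k=(i^k_1,\cdots,i^k_s)$ and $c_l=(i^l_1,\cdots,i^l_s)$ (both of the same length $s$, as they are similar),
\begin{equation*}
\frac{P_i(T^{m,c_k}\leq m,T^m=T^{m,c_k})}{P_i(T^{m,c_l}\leq m,T^m=T^{m,c_l})} = \frac{p_{i^k_1i^k_2}(t/m)\cdots p_{i^k_si^k_1}(t/m)}{p_{i^l_1i^l_2}(t/m)\cdots p_{i^l_si^l_1}(t/m)}.
\end{equation*}
Letting $m\rightarrow\infty$ and using $p_{ab}(t/m)/(t/m)\rightarrow q_{ab}$ for $a\neq b$, the $s$ common factors $t/m$ in numerator and denominator cancel, and the ratio converges to $q_{i^k_1i^k_2}\cdots q_{i^k_si^k_1}\,/\,q_{i^l_1i^l_2}\cdots q_{i^l_si^l_1} = \gamma^{c_k}/\gamma^{c_l}$, establishing the first equality of part (i).

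For the middle equality, note that $\{T^{c_k}\leq t,T=T^{c_k}\}\uparrow\{T=T^{c_k}\}$ as $t\rightarrow\infty$; since the ratio $P_i(T^{c_k}\leq t,T=T^{c_k})/P_i(T^{c_l}\leq t,T=T^{c_l})$ equals $\gamma^{c_k}/\gamma^{c_l}$ for every $t$, passing to the limit gives $P_i(T=T^{c_k})/P_i(T=T^{c_l}) = \gamma^{c_k}/\gamma^{c_l}$, and part (ii) follows at once from part (i). I expect the only genuine work to lie in the non-explosiveness and set-convergence step, which is precisely the technical heart already established for Theorem \ref{Haldanecontinuous1}; once that machinery is transported here it applies unchanged, the sole differences being the fixed initial state $i$ and the use of Theorem \ref{Haldane2} in place of Theorem \ref{Haldane1}. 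A minor point worth verifying is that $r$ is finite, so that the finite unions of events and the dominated convergence step behave uniformly in $m$; this is immediate from the hypotheses.
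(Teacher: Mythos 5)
Your proposal is correct and follows exactly the route the paper intends: the paper omits this proof, stating only that it follows from the techniques of Theorem \ref{Haldanecontinuous1}, and your argument is precisely that discretization scheme with Theorem \ref{Haldane2} substituted for Theorem \ref{Haldane1} at the discrete level. Nothing is missing.
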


\begin{corollary}
Let $c_1,c_2,\cdots,c_r$ be a family of cycles passing through a common state $i$. Let $T = \min\{T^{c_1},$ $T^{c_1-},\cdots,T^{c_r},T^{c_r-}\}$. Then \\
(i) for each $t>0$ and each $1\leq k\leq r$,
\begin{equation}
\frac{P_i(T^{c_k}\leq t,T=T^{c_k})}{P_i(T^{c_k-}\leq t,T=T^{c_k-})} = \frac{P_i(T=T^{c_k})}{P_i(T=T^{c_k-})} = \frac{\gamma^{c_k}}{\gamma^{c_k-}};
\end{equation}
(ii) for each $t>0$ and each $1\leq k\leq r$,
\begin{equation}
P_i(T^{c_k}\leq t|T=T^{c_k}) = P_i(T^{c_k-}\leq t|T=T^{c_k-}).
\end{equation}
\end{corollary}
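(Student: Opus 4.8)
The plan is to obtain this corollary as an immediate consequence of Theorem \ref{Haldanecontinuous2}, exactly paralleling the way the discrete-time conjugate-cycle corollary was deduced from Theorem \ref{Haldane2}. The idea is to apply the theorem not to the original family $c_1,\cdots,c_r$ but to the enlarged family of $2r$ cycles $c_1,c_1-,c_2,c_2-,\cdots,c_r,c_r-$.

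First I would verify that this enlarged family satisfies the hypotheses of Theorem \ref{Haldanecontinuous2}. Each $c_k$ passes through the common state $i$ by assumption, and its reversed cycle $c_k-$ passes through exactly the same set of states, hence also through $i$; so every member of the enlarged family passes through $i$. Moreover, the minimum $T=\min\{T^{c_1},T^{c_1-},\cdots,T^{c_r},T^{c_r-}\}$ appearing in the corollary is precisely the forming time of this enlarged family in the sense of Definition \ref{formingtimefamily}. For each fixed $1\leq k\leq r$, the cycles $c_k$ and $c_k-$ are conjugate and therefore similar, so the two indices of the enlarged family that correspond to $c_k$ and $c_k-$ form a pair of similar cycles, which is exactly the extra assumption needed in Theorem \ref{Haldanecontinuous2}.

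Applying the theorem to this pair then yields
\begin{equation}
\frac{P_i(T^{c_k}\leq t,T=T^{c_k})}{P_i(T^{c_k-}\leq t,T=T^{c_k-})} = \frac{P_i(T=T^{c_k})}{P_i(T=T^{c_k-})} = \frac{\gamma^{c_k}}{\gamma^{c_k-}},
\end{equation}
which is part (i); since $k$ is arbitrary, this holds for every $k$. Part (ii) follows by dividing the joint probability $P_i(T^{c_k}\leq t,T=T^{c_k})$ by $P_i(T=T^{c_k})$, doing the same for $c_k-$, and invoking the first equality in (i). Because the argument is a pure specialization of an already-established theorem, I do not expect any genuine obstacle; the only points needing (routine) verification are that the reversed cycles again pass through $i$ and that conjugate cycles are similar, both of which are already recorded in the paper, with the remainder being bookkeeping on the indices of the enlarged family.
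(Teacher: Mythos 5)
Your proposal is correct and matches the paper's intent exactly: the paper omits this proof but derives the parallel discrete-time corollary in precisely this way, as a direct specialization of the preceding theorem to the enlarged family $c_1,c_1-,\cdots,c_r,c_r-$, using that reversed cycles pass through the same states and that conjugate cycles are similar. No gaps.
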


\section{Large deviations and fluctuations of empirical circulations}
The generalized Haldane equalities established in this paper have wide applications. One of the most important applications of the generalized Haldane equalities is to study the circulation fluctuations for Markov chains. In this section, we shall prove that the empirical circulations of a family of cycles passing through a common state satisfy a large deviation principle with a good rate function. Particularly, we shall use the generalized Haldane equalities to prove that the rate function has a highly non-obvious symmetry, which is closely related to the Gallavotti-Cohen-type fluctuation theorem in nonequilibrium statistical physics.

\subsection{Preliminaries}
In order to establish the large deviations of empirical circulations, we need some results about the large deviations for Markov renewal processes. To avoid misunderstanding, we give the following definitions.

\begin{definition}
Let $(\mu_t)_{t>0}$ be a family of probability measures on a Polish space $E$. Then we say that $(\mu_t)_{t>0}$ satisfies a large deviation principle with rate $t$ and good rate function $I:E\rightarrow[0,\infty]$ if \\
(i) for each $\alpha\geq 0$, the level set $\{x\in E:I(x)\leq\alpha\}$ is compact in $E$; \\
(ii) for each closed set $F$ in $E$,
\begin{equation}
\limsup_{t\rightarrow\infty}\frac{1}{t}\log\mu_t(F)\leq -\inf_{x\in F}I(x);
\end{equation}
(iii) for each open set $U$ in $E$,
\begin{equation}
\liminf_{t\rightarrow\infty}\frac{1}{t}\log\mu_t(U)\geq -\inf_{x\in U}I(x).
\end{equation}
\end{definition}

\begin{definition}
Let $\xi = (\xi_n)_{n\geq 0}$ be an irreducible discrete-time Markov chain with finite state space $E$. Assume that each $x\in E$ is associated with a Borel probability measure $\phi_x$ on $(0,\infty)$. Let $(\tau_n)_{n\geq 1}$ be a sequence of positive and finite random variables such that conditional on $(\xi_n)_{n\geq 0}$, the random variables $(\tau_n)_{n\geq 1}$ are independent and have the distribution
\begin{equation}
P(\tau_n\in\cdot|(\xi_n)_{n\geq 0}) = \phi_{\xi_{n-1}}(\cdot).
\end{equation}
Then $(\xi_n,\tau_{n+1})_{n\geq 0}$ is called a Markov renewal process.
\end{definition}

The following lemma, which is due to Mariani and Zambotti \cite{mariani2012large}, shows that the empirical flow of Markov renewal processes satisfies a large deviation principle with a good rate function.

\begin{lemma}\label{LDPMRP}
Let $(\xi_n,\tau_{n+1})_{n\geq 0}$ be a Markov renewal process. Let $T_n = \sum_{k=1}^n\tau_k$ be the $n$-th jump time of the Markov renewal process. Let $N_t = \inf\{n\geq 0:T_{n+1}>t\}$ be the number of jumps of the Markov renewal process up to time $t$. Let $Q_t\in C(E\times E,[0,\infty))$ be the empirical flow of the Markov renewal process up to time $t$ defined as
\begin{equation}
Q_t(x,y) = \frac{1}{t}\sum_{n=0}^{N_t}I_{\{\xi_n=x,\xi_{n+1}=y\}}.
\end{equation}
Then the law of $Q_t$ satisfies a large deviation principle with rate $t$ and good rate function $I:C(E\times E,[0,\infty))\rightarrow[0,\infty]$. Moreover, the rate function $I$ is convex.
\end{lemma}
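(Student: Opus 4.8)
Since this lemma is proved in full by Mariani and Zambotti \cite{mariani2012large}, I only outline the strategy I would follow. Because $E$ is finite, the space $C(E\times E,[0,\infty))$ is simply the finite-dimensional cone $[0,\infty)^{E\times E}$, so the assertion is a finite-dimensional large deviation principle, and the Gärtner-Ellis theorem is the natural tool, with the added benefit that the rate function it produces is automatically convex. Writing $\langle\lambda,Q_t\rangle=\frac1t\sum_{n=0}^{N_t}\lambda(\xi_n,\xi_{n+1})$ for $\lambda\in\mathbb{R}^{E\times E}$, the plan is to show that the scaled cumulant generating function
\[
\Lambda(\lambda)=\lim_{t\to\infty}\frac1t\log\mathbb{E}\exp\Big(\sum_{n=0}^{N_t}\lambda(\xi_n,\xi_{n+1})\Big)
\]
exists and is smooth, and then to identify the rate function as its Legendre transform $\Lambda^*(Q)=\sup_\lambda\{\langle\lambda,Q\rangle-\Lambda(\lambda)\}$.

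To compute $\Lambda$ I would exploit the renewal structure through a Laplace transform in the time variable. For $s$ in the appropriate half-line and a tilting field $\lambda$, introduce the nonnegative matrix
\[
M_{s,\lambda}(x,y)=p(x,y)\,e^{\lambda(x,y)}\,\hat\phi_x(s),\qquad \hat\phi_x(s)=\int_0^\infty e^{-s\tau}\,\phi_x(d\tau),
\]
where $p(x,y)$ denotes the transition probability of the embedded chain $\xi$. Conditioning on the first jump yields a renewal equation for $g_x(t)=\mathbb{E}_x\exp(\sum_{n=0}^{N_t}\lambda(\xi_n,\xi_{n+1}))$ whose Laplace transform in $t$ is governed by the resolvent $(I-M_{s,\lambda})^{-1}$; the exponential growth rate of $g_x(t)$, and hence $\Lambda(\lambda)$, is therefore the value of $s$ at which the Perron-Frobenius eigenvalue $\rho(M_{s,\lambda})$ equals $1$. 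Since $\xi$ is irreducible, $M_{s,\lambda}$ is irreducible for each admissible $s$, so $\rho(M_{s,\lambda})$ is a simple eigenvalue depending analytically on $(s,\lambda)$ and strictly decreasing in $s$ because each $\hat\phi_x(s)$ is; the equation $\rho(M_{\Lambda(\lambda),\lambda})=1$ then determines $\Lambda(\lambda)$ uniquely, and at $\lambda=0$ it reduces to $\rho(P)=1$, giving $\Lambda(0)=0$ as it must.

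With $\Lambda$ in hand I would verify the hypotheses of the Gärtner-Ellis theorem: finiteness of $\Lambda$ on a neighbourhood of the origin, which yields exponential tightness and hence a good rate function, and essential smoothness, both of which follow from analyticity of the Perron eigenvalue together with the implicit function theorem applied to $\rho(M_{s,\lambda})=1$. This produces the full large deviation principle for the law of $Q_t$ with convex good rate function $I=\Lambda^*$. One should also note that consecutive transitions of the embedded chain share an endpoint, so the empirical flow is divergence-free up to an $O(1/t)$ boundary correction; consequently $I$ is automatically $+\infty$ off the cone of divergence-free flows, which is precisely why these limiting flows are circulations.

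I expect the main obstacle to be the rigorous control of the random horizon $N_t$ together with the general holding-time laws $\phi_x$. The Laplace transforms $\hat\phi_x(s)$ need not be finite for $s$ below some negative threshold, so the domain on which $M_{s,\lambda}$ is defined may be restricted; establishing that $\Lambda(\lambda)$ exists, is finite near the origin, and is steep at the boundary of its effective domain requires this renewal analysis to behave uniformly in $s$ and $\lambda$, and is the delicate point. An alternative and in some respects cleaner route, closer to the argument actually carried out in \cite{mariani2012large}, is to first prove a joint level-$2.5$ large deviation principle for the empirical occupation measure of $\xi$ together with the empirical flow $Q_t$ via an explicit change of measure, and then to deduce the stated principle for $Q_t$ alone by the contraction principle.
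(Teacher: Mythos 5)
The paper offers no proof of this lemma at all --- it simply cites Mariani and Zambotti \cite{mariani2012large} --- so there is nothing internal to compare your outline against; you and the authors agree in deferring to that reference. That said, your primary route through the G\"artner--Ellis theorem is not the one taken in \cite{mariani2012large}, and as written it has a genuine gap. The paper's definition of a Markov renewal process allows the holding-time laws $\phi_x$ to be arbitrary Borel probability measures on $(0,\infty)$. They may charge every neighbourhood of $0$, in which case $N_t$ need not have finite exponential moments and the scaled cumulant generating function $\Lambda(\lambda)$ can be $+\infty$ for every $\lambda$ with a positive component, which destroys the G\"artner--Ellis upper bound; and they may be heavy-tailed, in which case $\hat\phi_x(s)=+\infty$ for all $s<0$, the Perron root equation $\rho(M_{s,\lambda})=1$ need not admit a solution, and $\Lambda$ fails to be essentially smooth, which destroys the lower bound. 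Indeed, handling exactly this degeneracy is the point of \cite{mariani2012large}: for heavy-tailed waiting times a macroscopic fraction of the time window can be absorbed by a single long holding period, and the rate function is no longer the Legendre transform of a finite, steep cumulant generating function (though it remains convex, as the lemma asserts).

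Your closing paragraph names the correct strategy --- a joint large deviation principle for the empirical measure and empirical flow established by a direct change of measure and explicit tilting, followed by the contraction principle --- and that, rather than G\"artner--Ellis, is what should be presented as the proof. It is worth noting that in the one place this lemma is actually used in the paper (Lemma \ref{MRPcycle} and Theorem \ref{LDPcirculation}), the holding times are forming times of cycles and are stochastically bounded below by i.i.d.\ exponentials, so $N_t$ is Poisson-dominated and your tilted-matrix computation would go through in that special case; but the lemma is stated for general Markov renewal processes, and there your first route does not close.
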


\begin{proof}
The proof of this theorem can be found in \cite{mariani2012large}.
\end{proof}

\subsection{Large deviations of empirical circulations}
Let $X = (X_t)_{t\geq 0}$ be an irreducible and recurrent continuous-time Markov chain with denumerable state space $\mathbb{S}$ and transition rate matrix $Q = (q_{ij})$. In this paper, we only consider the large deviations of the empirical circulations for continuous-time Markov chains. Using similar but simpler techniques, we can obtain parallel results for discrete-time Markov chains.

\begin{definition}\label{empiricalcirculation}
Let $T^c_n$ be the $n$-th forming time of cycle $c$ by $X$ (see Definition \ref{formingtime}). Let $N^c_t = \inf\{n\geq 0: T^c_{n+1}>t\}$ be the number of cycle $c$ formed by $X$ up to time $t$. Then the empirical circulation $J^c_t$ of cycle $c$ up to time $t$ is defined as
\begin{equation}
J^c_t = \frac{1}{t}N^c_t
\end{equation}
and the empirical net circulation $K^c_t$ of cycle $c$ up to time $t$ is defined as
\begin{equation}
K^c_t = J^c_t-J^{c-}_t = \frac{1}{t}(N^c_t-N^{c-}_t).
\end{equation}
\end{definition}

The Qians' \cite{qian1982circulation} proved that the empirical circulation $J^c_t$ of each cycle $c$ converges almost surely to a nonnegative real number $J^c$, which is defined as the circulation of cycle $c$. The large deviations of the empirical circulations are stated in the following theorem.

\begin{theorem}\label{LDPcirculation}
Let $c_1,c_2,\cdots,c_r$ be a family of cycles passing through a common state $i$. Then under $P_i$, the law of $(J^{c_1}_t,J^{c_2}_t,\cdots,J^{c_r}_t)$ satisfies a large deviation principle with rate $t$ and good rate function $I^{c_1,c_2,\cdots,c_r}:\mathbb{R}^r\rightarrow[0,\infty]$.
\end{theorem}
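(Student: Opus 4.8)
The plan is to exploit the common state $i$ as a regeneration point and to realize the cycle counts as the jump counts of a finite-state Markov renewal process, to which Lemma \ref{LDPMRP} applies. First I would observe that, under $P_i$, the trajectory decomposes at the successive returns to $i$ into i.i.d. excursions: writing $\sigma_0=0<\sigma_1<\sigma_2<\cdots$ for the return times to $i$ (with real times as in Definition \ref{formingtime}), the strong Markov property at each $\sigma_m$ makes the excursions independent and identically distributed. Since the derived chain resets to $[i]$ at every visit to $i$, the sequence $Y_n$ always begins with $i$, and consequently exactly one cycle through $i$ is formed at each return $\sigma_m$, this cycle being a deterministic function of the $m$-th excursion. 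Hence the pairs $(\eta_m,W_m)$, where $\eta_m$ is the cycle formed and $W_m=\sigma_m-\sigma_{m-1}$ is its real-time duration, are i.i.d., and every formation of $c_1,\dots,c_r$ occurs at some $\sigma_m$. This is exactly where the common-state hypothesis enters: it supplies a single regeneration point at which all $r$ cycles are simultaneously counted.

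Next I would reduce to a finite state space, since $\mathbb{S}$ is only denumerable and there may be infinitely many cycle types through $i$, so that Lemma \ref{LDPMRP} cannot be invoked directly. I would lump the types into the finite set $E=\{c_1,\dots,c_r,*\}$, with $*$ collecting every other cycle through $i$, and set $\xi_n$ to be the lumped type of the $(n+1)$-th excursion and $\tau_{n+1}=W_{n+1}$. The i.i.d. property of $(\eta_m,W_m)$ guarantees that $(\xi_n,\tau_{n+1})_{n\geq0}$ is a Markov renewal process: $\xi$ is an i.i.d. (hence Markov) chain on $E$ with $P(\xi_{n+1}=y\mid\xi_n=x)$ independent of $x$, while $\tau_{n+1}$ has, conditional on $\xi_n=y$, the law $\phi_y$ of $W$ given that the excursion is of type $y$, which is a genuine probability measure on $(0,\infty)$ by non-explosion and recurrence. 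Cycles $c_k$ with $\gamma^{c_k}=0$ never form by Lemma \ref{finiteness2}, so $J^{c_k}_t\equiv 0$ and such states may simply be deleted from $E$; after deleting them and, if necessary, $*$, the chain is irreducible, and Lemma \ref{LDPMRP} yields an LDP with rate $t$ and good convex rate function $I$ for the empirical flow $Q_t$.

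Finally I would transport this LDP to $(J^{c_1}_t,\dots,J^{c_r}_t)$ by the contraction principle. The jump times $T_n=\sigma_n$ of the renewal process coincide with the returns to $i$, so $N_t$ counts returns by time $t$, and the number of formations of $c_k$ up to $t$ equals $\#\{n\leq N_t:\xi_n=c_k\}$ up to a boundary term of size at most one excursion. Thus $J^{c_k}_t=\sum_{y\in E}Q_t(c_k,y)+O(1/t)$, that is $(J^{c_1}_t,\dots,J^{c_r}_t)=\Phi(Q_t)+O(1/t)$ for the linear, hence continuous, map $\Phi:C(E\times E,[0,\infty))\to\mathbb{R}^r$ that sums the appropriate rows of $Q$. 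The deterministic $O(1/t)$ discrepancy makes $(J^{c_k}_t)_k$ and $\Phi(Q_t)$ exponentially equivalent, so they obey the same LDP, and the contraction principle then gives the LDP for $(J^{c_1}_t,\dots,J^{c_r}_t)$ with good rate function $I^{c_1,\dots,c_r}(x)=\inf\{I(Q):\Phi(Q)=x\}$, goodness being inherited because $\Phi$ is continuous and $I$ is good.

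I expect the main obstacle to be the passage to a finite-state Markov renewal process: one must verify carefully that lumping infinitely many cycle types into the single state $*$ preserves both the Markov renewal structure and the value of each $J^{c_k}_t$, and that the boundary discrepancy between the empirical flow and the circulation vector is exponentially negligible rather than merely almost surely negligible. Once the counting is packaged as a finite Markov renewal process, the LDP itself becomes a black-box application of Lemma \ref{LDPMRP} followed by the contraction principle.
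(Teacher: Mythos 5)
Your proposal is correct and follows essentially the same route as the paper: regeneration at the common state $i$ yields a finite-state Markov renewal process, Lemma \ref{LDPMRP} gives the LDP for its empirical flow, and the contraction principle transfers it to the circulation vector. The only (immaterial) difference is that you let the renewal process jump at every return to $i$ and lump the extraneous cycle types into a state $*$, whereas the paper lets it jump only at the forming times of $c_1,\cdots,c_r$ (handling the case where all $\gamma^{c_k}=0$ trivially), so the state space $\{c_1,\cdots,c_r\}$ is finite from the outset and no lumping or exponential-equivalence step is needed.
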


\begin{remark}
In general, it is very difficult to obtain an explicit and computable expression of the rate function $I^{c_1,c_2,\cdots,c_r}$. However, we can use the generalized Haldane equalities established in this paper to prove that the rate function $I^{c_1,c_2,\cdots,c_r}$ has a highly non-obvious symmetry, whose specific form is given in Theorem \ref{symmetry}.
\end{remark}

If we only focus on the forming of cycles, instead of the specific state transitions, then the corresponding process is a Markov renewal process, as stated in the following lemma.

\begin{lemma}\label{MRPcycle}
Let $c_1,c_2,\cdots,c_r$ be a family of cycles passing through a common state $i$ and assume that $\gamma^{c_k}>0$ for some $1\leq k\leq r$. Let $T_n$ be the $n$-th forming time of $c_1,c_2,\cdots,c_r$ by $X$ (see Definition \ref{formingtimefamily}). Let $\tau_n = T_n-T_{n-1}$. Let $\xi_n$ be a random variable defined as
\begin{equation}\label{xi}
\begin{split}
\xi_n =
\begin{cases}
c_1, &\textrm{if the trajetory of $X$ forms cycle $c_1$ at time $T_n$}, \\
c_2, &\textrm{if the trajetory of $X$ forms cycle $c_2$ at time $T_n$}, \\
\cdots, \\
c_r, &\textrm{if the trajetory of $X$ forms cycle $c_r$ at time $T_n$}. \\
\end{cases}
\end{split}
\end{equation}
Then under $P_i$, $(\xi_n,\tau_n)_{n\geq 1}$ is a Markov renewal process.
\end{lemma}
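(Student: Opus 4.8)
The plan is to verify the two defining properties of a Markov renewal process: first, that the sequence $(\xi_n)_{n\geq 1}$ of cycle labels is itself a time-homogeneous Markov chain on the finite state space $E=\{c_1,c_2,\cdots,c_r\}$, and second, that conditional on the entire label sequence $(\xi_n)_{n\geq 1}$, the inter-forming times $(\tau_n)_{n\geq 1}$ are independent with a distribution depending only on the preceding label $\xi_{n-1}$. The key structural fact I would exploit is that every cycle in the family passes through the common state $i$, and that by the very definition of the forming time of a family (Definition \ref{formingtimefamily}), at each time $T_n$ the trajectory has just closed one of the cycles $c_1,\cdots,c_r$ at the common state $i$. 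Hence at each $T_n$ the chain $X$ is located at state $i$, and the post-$T_n$ evolution of $X$ is, by the strong Markov property, an independent copy of $X$ started afresh from $i$.

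First I would make the location observation precise: whenever the trajectory forms a cycle $c_k=(i,i^k_2,\cdots,i^k_s)$ at time $T_n$, the derived chain has just discarded the loop and returned to a state whose last coordinate is $i$, so in particular $X_{T_n}=i$. This is the crucial consequence of requiring all cycles to pass through the common state $i$. Next I would invoke the strong Markov property at the stopping time $T_n$: since $X_{T_n}=i$ deterministically, the shifted process $(X_{T_n+t})_{t\geq 0}$ is distributed as $X$ under $P_i$ and is conditionally independent of $\mathscr{F}_{T_n}$ given $X_{T_n}=i$. Because $T_{n+1}$, $\tau_{n+1}$, and $\xi_{n+1}$ are all functionals of this shifted process (they record the next forming time of the family and which cycle is formed, relative to the fresh start at $i$), the pair $(\xi_{n+1},\tau_{n+1})$ depends on the past only through the fact that the chain restarts at $i$, which is always the case.

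From this renewal-at-$i$ structure two conclusions follow directly. The label chain $(\xi_n)$ is Markov: in fact the joint law of $(\xi_{n+1},\tau_{n+1})$ given the past is the same for every $n$ and is identical to the law of $(\xi_1,\tau_1)$ under $P_i$, so the transition mechanism is independent of $n$ and of the past labels; in particular $(\xi_n)$ is an i.i.d.\ (hence trivially Markov) sequence on $E$. Here the assumption $\gamma^{c_k}>0$ for some $k$, together with Lemma \ref{finiteness2}, guarantees $T_n<\infty$ and $\tau_n>0$ almost surely, so the $\tau_n$ are genuine positive finite random variables and the state space actually visited is nonempty. For the conditional independence of the $(\tau_n)$ given $(\xi_n)$, I would note that conditioning on $\{\xi_n=c_k\}$ is exactly conditioning the $n$-th excursion from $i$ on the event that the cycle formed is $c_k$; by the renewal structure these excursions are mutually independent across $n$, so the $\tau_n$ are conditionally independent, and the conditional law of $\tau_n$ given $\xi_n=c_k$ equals $P_i(\,\tau_1\in\cdot\mid \xi_1=c_k\,)=:\phi_{c_k}(\cdot)$, a Borel probability measure on $(0,\infty)$ depending only on $\xi_{n-1}$ through the (trivial) transition. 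This matches the definition of a Markov renewal process.

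The main obstacle I anticipate is not the strong Markov argument itself but handling the discrete-time derived-chain bookkeeping cleanly in continuous time: one must be careful that the jump times $J_n$ and the embedded-chain forming times $\bar T^{c}_n$ from Definition \ref{formingtime} compose correctly under the shift, so that the forming times of the family genuinely regenerate at $i$ and $\tau_{n+1}$ really is measurable with respect to the shifted process and independent of $\mathscr{F}_{T_n}$. I would address this by phrasing the strong Markov property at $T_n$ in terms of the embedded chain and then translating back through $T^c=J_{\bar T^c}$, using non-explosiveness (irreducibility plus recurrence) to ensure all the relevant times are finite almost surely.
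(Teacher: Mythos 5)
Your proposal is correct and follows essentially the same route as the paper's proof: the observation that $X_{T_n}=i$ because all cycles pass through the common starting state, the strong Markov property at $T_n$ yielding that $(\xi_n,\tau_n)_{n\geq 1}$ is i.i.d., the use of $\gamma^{c_k}>0$ with Lemma \ref{finiteness2} to ensure the $\tau_n$ are positive and finite, and the deduction of the conditional independence and the form $\phi_{\xi_n}(\cdot)=P_i(\tau_1\in\cdot\mid\xi_1=\cdot)$ from the i.i.d.\ structure. No substantive differences.
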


\begin{proof}
Since $X$ starts from $i$ and $c_1,c_2,\cdots,c_r$ pass through $i$, it is easy to see that $X_{T_n} = i$ for each $n$. By the strong Markov property, the random sequence $(\xi_n,\tau_n)_{n\geq 1}$ is independent and identically distributed. This shows that $(\xi_n)_{n\geq 1}$ is a Markov chain with state space $E = \{c_1,c_2,\cdots,c_r\}$. Note that $\gamma^{c_k}>0$ for some $k$. By Lemma \ref{finiteness2}, we have $T_n\leq T^{c_k}_n<\infty$ almost surely for each $n$. This shows that $(\tau_n)_{n\geq 1}$ is a sequence of positive and finite random variables.

Since $(\xi_n,\tau_n)_{n\geq 1}$ is independent and identically distributed, for any bounded measurable function $f_1,\cdots,f_n$ on $(0,\infty)$, it is easy to see that
\begin{equation}
E_i(f_1(\tau_1)\cdots f_n(\tau_n)|(\xi_n)_{n\geq 1}) = E_i(f_1(\tau_1)|(\xi_n)_{n\geq 1})\cdots E_i(f_n(\tau_n)|(\xi_n)_{n\geq 1}).
\end{equation}
Moreover, for any Borel set $A$ in $(0,\infty)$,
\begin{equation}
P_i(\tau_n\in A|(\xi_n)_{n\geq 1}) = P_i(\tau_n\in A|\xi_n) = P_i(\tau_1\in A|\xi_1=x)|_{x=\xi_n} = \phi_{\xi_n}(A).
\end{equation}
where $\phi_x(A)=P_i(\tau_1\in A|\xi_1=x)$. The above two equations show each $x\in E$ is associated with a Borel probability measure $\phi_x$ on $(0,\infty)$ and conditional on $(\xi_n)_{n\geq 1}$, the random variables $(\tau_n)_{n\geq 1}$ are independent and have the distribution $P_i(\tau_n\in\cdot|(\xi_n)_{n\geq 1}) = \phi_{\xi_n}(\cdot)$. This shows that $(\xi_n,\tau_n)_{n\geq 1}$ is a Markov renewal process.
\end{proof}

We are now in a position to establish the large deviations of the empirical circulations.

\begin{proof}[Proof of Theorem \ref{LDPcirculation}]
We only need to prove this theorem when $\gamma^{c_k}>0$ for some $k$. Otherwise, we have $\gamma^{c_k}=0$ for each $k$. By Lemma \ref{finiteness2}, we see that $T^{c_k}=\infty$ almost surely for each $k$. This shows that $J^{c_k}_t=0$ almost surely for each $k$. In this case, the result of this theorem holds trivially.

We next assume that $\gamma^{c_k}>0$ for some $k$. By Lemma \ref{MRPcycle}, we see that $(\xi_n,\tau_n)_{n\geq 1}$ is a Markov renewal process with state space $E = \{c_1,c_2,\cdots,c_r\}$. Let $N_t = \inf\{n\geq 0: T_{n+1}>t\}$ be the number of jumps of the Markov renewal process up to time $t$. Let $Q_t\in C(E\times E,[0,\infty))$ be the empirical flow of the Markov renewal process up to time $t$ defined as
\begin{equation}
Q_t(x,y) = \frac{1}{t}\sum_{n=1}^{N_t}I_{\{\xi_n=x,\xi_{n+1}=y\}}.
\end{equation}
Note that for each $k$,
\begin{equation}
J^{c_k}_t = \frac{1}{t}N^{c_k}_t = \frac{1}{t}\sum_{n=1}^{N_t}I_{\{\xi_n=c_k\}} = \sum_{y\in E}Q_t(c_k,y).
\end{equation}
We define a continuous map $F:C(E\times E,[0,\infty))\rightarrow\mathbb{R}^r$ as
\begin{equation}\label{contraction}
F(Q) = \left(\sum_{y\in E}Q(c_1,y),\cdots,\sum_{y\in E}Q(c_r,y)\right).
\end{equation}
Thus we have
\begin{equation}
(J^{c_1}_t,\cdots,J^{c_r}_t) = F(Q_t).
\end{equation}
By Lemma \ref{LDPMRP}, the law of $Q_t$ satisfies a large deviation principle with rate $t$ and good rate function $I:C(E\times E,[0,\infty))\rightarrow[0,\infty]$. Using the contraction principle, we see that the law of $(J^{c_1}_t,\cdots,J^{c_r}_t)$ satisfies a large deviation principle with rate $t$ and good rate function $I^{c_1,\cdots,c_r}:\mathbb{R}^r\rightarrow[0,\infty]$ which can be represented as
\begin{equation}\label{ratefunction}
I^{c_1,\cdots,c_r}(x) = \inf_{Q\in F^{-1}(x)}I(Q).
\end{equation}
This completes the proof of this theorem.
\end{proof}

\subsection{Circulation fluctuations for Markov chains}
We have proved that the empirical circulations of a family of cycles $c_1,c_2,\cdots,c_r$ passing through a common state satisfy a large deviation principle with rate $t$ and good rate function $I^{c_1,c_2,\cdots,c_r}$. In this section, we shall study the properties of the rate function $I^{c_1,c_2,\cdots,c_r}$. In fact, we can prove that the rate function $I^{c_1,c_2,\cdots,c_r}$ has a highly non-obvious symmetry, whose specific form is given in the next theorem.

\begin{theorem}\label{symmetry}
The notations are the same as in Theorem \ref{LDPcirculation}. Assume that $c_k$ and $c_l$ are similar for some two indices $1\leq k,l\leq r$. Then the rate function $I^{c_1,c_2,\cdots,c_r}$ has the following symmetry:
\begin{equation}
\begin{split}
& I^{c_1,c_2,\cdots,c_r}(x_1,\cdots,x_k,\cdots,x_l,\cdots,x_r) \\
&= I^{c_1,c_2,\cdots,c_r}(x_1,\cdots,x_l,\cdots,x_k,\cdots,x_r)-
\left(\log\frac{\gamma^{c_k}}{\gamma^{c_l}}\right)(x_k-x_l).
\end{split}
\end{equation}
\end{theorem}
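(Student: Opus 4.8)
The plan is to pass from the empirical circulations to the scaled cumulant generating function (SCGF) of the underlying renewal process, read off an exact symmetry of the SCGF from the generalized Haldane equalities, and then transport that symmetry to $I^{c_1,\dots,c_r}$ by convex (Legendre) duality. Throughout I work under $P_i$ and, since the statement is otherwise degenerate, I assume $\gamma^{c_k},\gamma^{c_l}>0$. The two facts I extract from the Haldane equalities are the only inputs specific to the cycle structure. Writing $p_j=P_i(T=T^{c_j})$ for the probability that $c_j$ is the first of $c_1,\dots,c_r$ to be formed and $\phi_j:=\phi_{c_j}$, with $\phi_{c_j}(\cdot)=P_i(\tau_1\in\cdot\mid\xi_1=c_j)$, for the conditional law of the first inter-forming time, Theorem \ref{Haldanecontinuous2} applied to the similar pair $c_k,c_l$ gives
\[
\frac{p_k}{p_l}=\frac{\gamma^{c_k}}{\gamma^{c_l}},\qquad \phi_k=\phi_l .
\]
Thus $c_k$ and $c_l$ enter the renewal dynamics $(\xi_n,\tau_n)_{n\ge1}$ asymmetrically only through the constant factor $\gamma^{c_k}/\gamma^{c_l}$ in the selection probabilities.

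Next I compute the SCGF. Because $(\xi_n,\tau_n)_{n\ge1}$ is i.i.d.\ (proof of Lemma \ref{MRPcycle}) and $J^{c_j}_t=\tfrac1t\sum_{n=1}^{N_t}I_{\{\xi_n=c_j\}}$, a first-renewal decomposition of $\Phi_t(\lambda)=E_i\exp\!\big(\sum_j\lambda_j N^{c_j}_t\big)$ produces a renewal equation whose exponential growth rate is the Malthusian root; concretely I expect
\[
\Lambda(\lambda)=\lim_{t\to\infty}\tfrac1t\log\Phi_t(\lambda)=\theta(\lambda),
\]
where $\theta=\theta(\lambda)$ is the unique real solution of the characteristic equation $\sum_{j=1}^r p_j e^{\lambda_j}\tilde\phi_j(\theta)=1$, with $\tilde\phi_j(\theta)=\int_0^\infty e^{-\theta s}\phi_j(ds)$. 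I then verify the symmetry of $\Lambda$. Let $a=\log(\gamma^{c_k}/\gamma^{c_l})$ and let $\sigma$ be the affine involution of $\mathbb{R}^r$ with $(\sigma\lambda)_k=\lambda_l-a$, $(\sigma\lambda)_l=\lambda_k+a$, and $(\sigma\lambda)_j=\lambda_j$ for $j\neq k,l$. Substituting $\sigma\lambda$ into the characteristic equation and using $\phi_k=\phi_l$ (so $\tilde\phi_k=\tilde\phi_l$) and $p_k=e^a p_l$, the $k$- and $l$-terms simply trade places while all other terms are fixed, so the equation is unchanged at the same $\theta$; hence $\Lambda(\sigma\lambda)=\Lambda(\lambda)$.

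It remains to transport this to $I^{c_1,\dots,c_r}$. The contraction map $F$ in \eqref{contraction} is linear and the flow rate function of Lemma \ref{LDPMRP} is convex, so $I^{c_1,\dots,c_r}$, being the infimal projection of a convex function along a linear map, is convex and lower semicontinuous; together with a Varadhan-type bound this identifies it as the Legendre transform $\Lambda^\ast$. Writing $x^{\mathrm{sw}}$ for $x$ with its $k$th and $l$th coordinates exchanged, I substitute $\lambda=\sigma\mu$ in $I^{c_1,\dots,c_r}(x^{\mathrm{sw}})=\sup_\lambda(\langle\lambda,x^{\mathrm{sw}}\rangle-\Lambda(\lambda))$; using $\Lambda(\sigma\mu)=\Lambda(\mu)$ and the elementary identity $\langle\sigma\mu,x^{\mathrm{sw}}\rangle=\langle\mu,x\rangle+a(x_k-x_l)$ gives
\[
I^{c_1,\dots,c_r}(x^{\mathrm{sw}})=I^{c_1,\dots,c_r}(x)+a\,(x_k-x_l),
\]
which is exactly the asserted identity after rearrangement.

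The algebraic symmetry of $\Lambda$ is immediate once the two Haldane inputs are available, so the real work — and the main obstacle — lies in the analytic underpinnings of the duality step: establishing that the SCGF limit exists and is finite (equivalently, that $N_t/t$ has sufficient exponential moments under $P_i$ so that Varadhan's lemma applies), proving existence, uniqueness and the growth-rate interpretation of the Malthusian root together with finiteness of the Laplace transforms $\tilde\phi_j(\theta)$ in the relevant range of $\theta$, and thereby justifying $I^{c_1,\dots,c_r}=\Lambda^\ast$ rather than merely the contraction representation \eqref{ratefunction}.
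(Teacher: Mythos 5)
Your overall architecture --- extract from the Haldane equalities that $P_i(T=T^{c_k})/P_i(T=T^{c_l})=\gamma^{c_k}/\gamma^{c_l}$ and $\phi_{c_k}=\phi_{c_l}$, deduce a symmetry of the cumulant generating function under the affine involution $\sigma$, and transport it to $I^{c_1,\dots,c_r}$ via convexity, Varadhan and Fenchel--Moreau --- is exactly the paper's, and your endgame (the computation $\langle\sigma\mu,x^{\mathrm{sw}}\rangle=\langle\mu,x\rangle+a(x_k-x_l)$ together with $I=\Lambda^{\ast}$) matches the paper's proof of Theorem \ref{symmetry} line for line. The genuine gap is in your middle step: you establish the symmetry only of the \emph{limiting} SCGF, via the assertion that $\Lambda(\lambda)=\theta(\lambda)$ for ``the unique real solution'' of $\sum_j p_j e^{\lambda_j}\tilde\phi_j(\theta)=1$. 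This is not proved, and as stated it is not even well posed: when $\sum_j p_je^{\lambda_j}<1$ the root must be negative, and if the inter-forming time $\tau_1$ has no exponential moments then $\tilde\phi_j(\theta)=\infty$ for all $\theta<0$ and no real root exists; for a general recurrent chain on a denumerable state space you cannot assume such moments. You correctly flag this as ``the real work,'' but flagging it does not close it, and the identification $I=\Lambda^{\ast}$ also needs the moment condition in Varadhan's lemma, which you do not supply (the paper gets it by stochastically dominating $N_t$ by a Poisson variable in Lemma \ref{applicationVaradhan}).

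The good news is that the Malthusian detour is unnecessary, and your own two inputs already close the gap. Since $(\xi_n,\tau_n)_{n\ge1}$ is i.i.d.\ under $P_i$, the finite-time generating function $g_t(\lambda)=E_i\exp(\sum_j\lambda_jN^{c_j}_t)$ depends on $\lambda$ only through the single finite measure $\nu_\lambda=\sum_j p_je^{\lambda_j}\phi_{c_j}$ (condition on $N_t=N$ and integrate out the i.i.d.\ pairs), and $\nu_{\sigma\lambda}=\nu_\lambda$ because $\sigma$ swaps $p_ke^{\lambda_k}$ with $p_le^{\lambda_l}$ while $\phi_{c_k}=\phi_{c_l}$. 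Hence $g_t(\sigma\lambda)=g_t(\lambda)$ \emph{exactly for every finite $t$}, with no renewal asymptotics, no Laplace transforms, and no root-finding. This is precisely the content of the paper's Lemma \ref{generating} (proved there via the distributional identity of Lemma \ref{distribution}); combined with Lemma \ref{applicationVaradhan} and Lemma \ref{involution} it yields the theorem by the duality computation you already wrote down. If you replace your Malthusian step by this finite-$t$ identity and add the Poisson-domination bound for the Varadhan moment condition, your proof is complete and coincides with the paper's.
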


In order to prove the above theorem, we need several lemmas.

\begin{lemma}\label{convex}
The rate function $I^{c_1,c_2\cdots,c_r}$ is convex.
\end{lemma}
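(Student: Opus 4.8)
The plan is to deduce the convexity of $I^{c_1,c_2,\cdots,c_r}$ directly from its representation \eqref{ratefunction} as the infimal projection of the rate function $I$ under the map $F$. Recall from the proof of Theorem \ref{LDPcirculation} that
\[
I^{c_1,\cdots,c_r}(x) = \inf_{Q\in F^{-1}(x)}I(Q),
\]
where, by Lemma \ref{LDPMRP}, the rate function $I$ on $C(E\times E,[0,\infty))$ is convex, and where $F$ defined in \eqref{contraction} is the restriction to the convex cone $C(E\times E,[0,\infty))$ of a linear map into $\mathbb{R}^r$. The structural fact I would exploit is that the infimal projection of a convex function along a linear map is again convex; the entire lemma reduces to a clean instance of this principle.

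To carry this out, I would fix $x,y\in\mathbb{R}^r$ and $\lambda\in[0,1]$ and aim to show $I^{c_1,\cdots,c_r}(\lambda x+(1-\lambda)y)\leq \lambda I^{c_1,\cdots,c_r}(x)+(1-\lambda)I^{c_1,\cdots,c_r}(y)$. If either value on the right is $+\infty$ the inequality is trivial, so I may assume both are finite, which forces $F^{-1}(x)$ and $F^{-1}(y)$ to be nonempty. Given $\epsilon>0$, I would select $Q_1\in F^{-1}(x)$ and $Q_2\in F^{-1}(y)$ with $I(Q_1)\leq I^{c_1,\cdots,c_r}(x)+\epsilon$ and $I(Q_2)\leq I^{c_1,\cdots,c_r}(y)+\epsilon$. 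Since $C(E\times E,[0,\infty))$ is a convex cone, the combination $\lambda Q_1+(1-\lambda)Q_2$ again lies in it, and by the linearity of $F$ we have $F(\lambda Q_1+(1-\lambda)Q_2)=\lambda x+(1-\lambda)y$, so this combination is an admissible competitor in the infimum defining $I^{c_1,\cdots,c_r}(\lambda x+(1-\lambda)y)$.

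Combining admissibility with the convexity of $I$ then gives
\[
I^{c_1,\cdots,c_r}(\lambda x+(1-\lambda)y)\leq I\big(\lambda Q_1+(1-\lambda)Q_2\big)\leq \lambda I(Q_1)+(1-\lambda)I(Q_2),
\]
and since $I(Q_1)\leq I^{c_1,\cdots,c_r}(x)+\epsilon$ and $I(Q_2)\leq I^{c_1,\cdots,c_r}(y)+\epsilon$, the right-hand side is bounded by $\lambda I^{c_1,\cdots,c_r}(x)+(1-\lambda)I^{c_1,\cdots,c_r}(y)+\epsilon$; letting $\epsilon\to0$ yields the desired convexity. I do not anticipate a substantive obstacle here. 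The only two points needing care are the use of the $\epsilon$-relaxation, which avoids having to assume that the infimum in \eqref{ratefunction} is attained, and the observation that the feasible set $C(E\times E,[0,\infty))$ over which the infimum is taken is itself convex, so that convex combinations of near-optimal competitors remain admissible; both are dispatched by the argument above.
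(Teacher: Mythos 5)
Your proposal is correct and follows essentially the same route as the paper: both arguments exploit the representation \eqref{ratefunction} of $I^{c_1,\cdots,c_r}$ as an infimal projection, the linearity of $F$ on the convex cone $C(E\times E,[0,\infty))$, and the convexity of $I$ from Lemma \ref{LDPMRP}. The only cosmetic difference is that you work with $\epsilon$-near-optimal competitors while the paper manipulates the infima over the sets $F^{-1}(x)$, $F^{-1}(y)$ directly; the two are equivalent, and your explicit handling of the case where a fiber is empty (value $+\infty$) is a minor point the paper leaves implicit.
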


\begin{proof}
Note that the function $F$ defined in \eqref{contraction} is a linear function. This fact, together with \eqref{ratefunction}, shows that for any $0<\lambda<1$ and any $x,y\in\mathbb{R}^r$,
\begin{equation}
\begin{split}
& I^{c_1,\cdots,c_r}(\lambda x+(1-\lambda)y) = \inf_{Q\in F^{-1}(\lambda x+(1-\lambda)y)}I(Q) \\
&\leq \inf_{Q\in\lambda F^{-1}(x)+(1-\lambda)F^{-1}(y)}I(Q)
= \inf_{Q\in F^{-1}(x),R\in F^{-1}(y)}I(\lambda Q+(1-\lambda)R).
\end{split}
\end{equation}
By Lemma \ref{LDPMRP}, the rate function $I$ is convex. Thus we obtain that
\begin{equation}
\begin{split}
& I^{c_1,\cdots,c_r}(\lambda x+(1-\lambda)y) \leq \inf_{Q\in F^{-1}(x),R\in F^{-1}(y)}\lambda I(Q)+(1-\lambda)I(R) \\
&= \lambda\inf_{Q\in F^{-1}(x)}I(Q)+(1-\lambda)\inf_{R\in F^{-1}(y)}I(R)
= \lambda I^{c_1,\cdots,c_r}(x)+(1-\lambda)I^{c_1,\cdots,c_r}(y).
\end{split}
\end{equation}
This completes the proof of this lemma.
\end{proof}

The following lemma follows directly from the generalized Haldane equalities.

\begin{lemma}\label{Haldaneuseful}
Let $c_1,c_2,\cdots,c_r$ be a family of cycles passing through a common state $i$. Assume that $c_k$ and $c_l$ are similar for some two indices $1\leq k,l\leq r$. Let $T = \min\{T^{c_1},T^{c_2},\cdots,T^{c_r}\}$. Then for each $t>0$,
\begin{equation}
P_i(T\leq t,T=T^{c_k}) = P_i(T\leq t,T=T^{c_k}\wedge T^{c_l})P_i(T^{c_k}<T^{c_l}).
\end{equation}
\end{lemma}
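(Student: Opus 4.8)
The plan is to reduce the identity to two separate applications of the generalized Haldane equality (Theorem \ref{Haldanecontinuous2}), one to the whole family $c_1,\dots,c_r$ and one to the two-element family $\{c_k,c_l\}$, glued together by a simple counting observation. First I would record the elementary fact that at each jump the derived chain can close at most one cycle, so distinct cycles are never formed at the same instant; hence on $\{T<\infty\}$ the forming times are a.s. pairwise distinct and
$$\{T=T^{c_k}\wedge T^{c_l}\}=\{T=T^{c_k}\}\sqcup\{T=T^{c_l}\}.$$
Intersecting with $\{T\le t\}$ gives
$$P_i(T\le t,\,T=T^{c_k}\wedge T^{c_l})=P_i(T\le t,\,T=T^{c_k})+P_i(T\le t,\,T=T^{c_l}).$$
Writing $a=P_i(T\le t,\,T=T^{c_k})$ and $b=P_i(T\le t,\,T=T^{c_l})$, the assertion becomes the algebraic identity $a=(a+b)\,P_i(T^{c_k}<T^{c_l})$.

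Next I would dispose of the degenerate case. If $\gamma^{c_k}=0$ or $\gamma^{c_l}=0$, then by Lemma \ref{finiteness2} the corresponding forming time is a.s. infinite, and a direct inspection of each side (using that $\{T\le t\}$ excludes infinite forming times) shows that the identity holds trivially. Thus I may assume $\gamma^{c_k},\gamma^{c_l}>0$, so that $T^{c_k}$ and $T^{c_l}$ are a.s. finite.

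In the nondegenerate case the two inputs are as follows. Applying Theorem \ref{Haldanecontinuous2}(i) to the full family $c_1,\dots,c_r$, and using that on $\{T=T^{c_k}\}$ one has $T^{c_k}=T$ so that $\{T^{c_k}\le t,\,T=T^{c_k}\}=\{T\le t,\,T=T^{c_k}\}$, I obtain
$$\frac{a}{b}=\frac{P_i(T^{c_k}\le t,\,T=T^{c_k})}{P_i(T^{c_l}\le t,\,T=T^{c_l})}=\frac{\gamma^{c_k}}{\gamma^{c_l}},$$
hence $a=\tfrac{\gamma^{c_k}}{\gamma^{c_k}+\gamma^{c_l}}(a+b)$. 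Applying the same theorem to the pair $\{c_k,c_l\}$ (two similar cycles through $i$), with $\widetilde T=T^{c_k}\wedge T^{c_l}$, gives the middle ratio $\tfrac{P_i(\widetilde T=T^{c_k})}{P_i(\widetilde T=T^{c_l})}=\tfrac{\gamma^{c_k}}{\gamma^{c_l}}$; since both forming times are a.s. finite and distinct, $\{\widetilde T=T^{c_k}\}=\{T^{c_k}<T^{c_l}\}$ and these events partition $\Omega$, so $P_i(T^{c_k}<T^{c_l})=\tfrac{\gamma^{c_k}}{\gamma^{c_k}+\gamma^{c_l}}$. Combining the two displays yields $(a+b)\,P_i(T^{c_k}<T^{c_l})=\tfrac{\gamma^{c_k}}{\gamma^{c_k}+\gamma^{c_l}}(a+b)=a$, as required.

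The arithmetic is routine once the two Haldane ratios are in hand; the genuinely delicate points are organizational. The key is to realize that the equality must invoke the generalized Haldane equality \emph{twice with two different cycle families} at the \emph{same} origin $i$ — the ratio $a/b$ comes from the large family $c_1,\dots,c_r$, while $P_i(T^{c_k}<T^{c_l})$ comes from the two-element family $\{c_k,c_l\}$. The auxiliary observation that distinct cycles cannot be formed simultaneously is what makes the two applications compatible: it justifies both the disjoint decomposition of $\{T=T^{c_k}\wedge T^{c_l}\}$ and the unambiguous identification $\{\widetilde T=T^{c_k}\}=\{T^{c_k}<T^{c_l}\}$. Isolating the $\gamma=0$ boundary cases beforehand is what keeps the division steps legitimate.
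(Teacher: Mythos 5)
Your proof is correct and follows essentially the same route as the paper: both arguments apply Theorem \ref{Haldanecontinuous2}(i) twice at the same starting state $i$ --- once to the full family $c_1,\dots,c_r$ to get $a/b=\gamma^{c_k}/\gamma^{c_l}$ and once to the pair $\{c_k,c_l\}$ to get $P_i(T^{c_k}<T^{c_l})/P_i(T^{c_l}<T^{c_k})=\gamma^{c_k}/\gamma^{c_l}$ --- and then combine them using the a.s.\ disjointness of the cycle-forming events. The only cosmetic difference is that you resolve the two ratios into the explicit value $\gamma^{c_k}/(\gamma^{c_k}+\gamma^{c_l})$, whereas the paper phrases the final step as an equality of conditional probabilities; your explicit treatment of the degenerate cases $\gamma^{c_k}=0$ or $\gamma^{c_l}=0$ is a small bonus the paper leaves implicit.
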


\begin{proof}
By Theorem \ref{Haldanecontinuous2}(i), we have
\begin{equation}
\frac{P_i(T\leq t,T=T^{c_k})}{P_i(T\leq t,T=T^{c_l})} = \frac{P_i(T\leq t,T=T^{c_k}\wedge T^{c_l},T^{c_k}<T^{c_l})}{P_i(T\leq t,T=T^{c_k}\wedge T^{c_l},T^{c_l}<T^{c_k})} = \frac{\gamma^{c_k}}{\gamma^{c_l}}.
\end{equation}
Using Theorem \ref{Haldanecontinuous2}(i) again, we have
\begin{equation}
\frac{P_i(T^{c_k}\wedge T^{c_l}=T^{c_k})}{P_i(T^{c_k}\wedge T^{c_l}=T^{c_l})} = \frac{P_i(T^{c_k}<T^{c_l})}{P_i(T^{c_l}<T^{c_k})} = \frac{\gamma^{c_k}}{\gamma^{c_l}}.
\end{equation}
Combining the above two equations, we obtain that
\begin{equation}
P_i(T\leq t,T=T^{c_k}\wedge T^{c_l}|T^{c_k}<T^{c_l}) = P_i(T\leq t,T=T^{c_k}\wedge T^{c_l}|T^{c_l}<T^{c_k}).
\end{equation}
This implies that
\begin{equation}
P_i(T\leq t,T=T^{c_k}\wedge T^{c_l}|T^{c_k}<T^{c_l}) = P_i(T\leq t,T=T^{c_k}\wedge T^{c_l}).
\end{equation}
Thus we obtain that
\begin{equation}
\begin{split}
& P_i(T\leq t,T=T^{c_k}) = P_i(T\leq t,T=T^{c_k}\wedge T^{c_l},T^{c_k}<T^{c_l}) \\
&= P_i(T\leq t,T=T^{c_k}\wedge T^{c_l}|T^{c_k}<T^{c_l})P_i(T^{c_k}<T^{c_l}) \\
&= P_i(T\leq t,T=T^{c_k}\wedge T^{c_l})P_i(T^{c_k}<T^{c_l}),
\end{split}
\end{equation}
which gives the desired result.
\end{proof}

We next use the generalized Haldane equalities to prove that the joint distribution of the empirical circulations has a certain symmetry.

\begin{lemma}\label{distribution}
Let $c_1,c_2,\cdots,c_r$ be a family of cycles passing through a common state $i$. Assume that $c_k$ and $c_l$ are similar for some two indices $1\leq k,l\leq r$. Then for any $n_1,n_2,\cdots,n_r\in\mathbb{N}$,
\begin{equation}
\frac{P_i(N^{c_1}_t=n_1,\cdots,N^{c_k}_t=n_k,\cdots,N^{c_l}_t=n_l,\cdots,N^{c_r}_t=n_r)}
{P_i(N^{c_1}_t=n_1,\cdots,N^{c_k}_t=n_l,\cdots,N^{c_l}_t=n_k,\cdots,N^{c_r}_t=n_r)} = \left(\frac{\gamma^{c_k}}{\gamma^{c_l}}\right)^{n_k-n_l}.
\end{equation}
\end{lemma}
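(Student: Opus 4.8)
The plan is to exploit the i.i.d.\ Markov renewal structure established in Lemma \ref{MRPcycle}, together with the two generalized Haldane equalities of Theorem \ref{Haldanecontinuous2}, and to set up a label-swapping bijection on the sequences of cycles formed that produces exactly the factor $(\gamma^{c_k}/\gamma^{c_l})^{n_k-n_l}$. First I would record the two ingredients. Since $X$ starts from $i$ and all cycles pass through $i$, the proof of Lemma \ref{MRPcycle} shows that $(\xi_n,\tau_n)_{n\geq 1}$ is i.i.d.; write $\rho_d=P_i(\xi_1=d)$ and let $\phi_d$ be the conditional law of $\tau_1$ given $\xi_1=d$, so that the joint law of $(\xi_1,\tau_1)$ is $\nu_d(ds)=\rho_d\,\phi_d(ds)$. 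Because $\tau_1=T_1=T$ and $\{\xi_1=c_k\}=\{T=T^{c_k}\}$, Theorem \ref{Haldanecontinuous2}(i) yields $\rho_{c_k}/\rho_{c_l}=\gamma^{c_k}/\gamma^{c_l}$, while Theorem \ref{Haldanecontinuous2}(ii) yields the crucial identity $\phi_{c_k}=\phi_{c_l}$: the conditional law of the first forming time does not depend on which of the two similar cycles is the one formed.

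Next I would condition on the exact sequence of cycles formed up to time $t$. Setting $N=n_1+\cdots+n_r$, the event $\{N^{c_j}_t=n_j\text{ for all }j\}$ decomposes as a sum over sequences $d=(d_1,\dots,d_N)$ in which $c_j$ appears $n_j$ times, of the probabilities $P_i(\xi_1=d_1,\dots,\xi_N=d_N,\,N_t=N)$. On each such sequence I would expand, using the i.i.d.\ property and the fact that $N_t=N$ means $\sum_1^N\tau_m\le t<\sum_1^{N+1}\tau_m$,
\begin{equation*}
P_i(\xi_1=d_1,\dots,\xi_N=d_N,\,N_t=N)
=\Big(\prod_{m=1}^{N}\rho_{d_m}\Big)\int \mathbf{1}\Big\{\textstyle\sum_1^N s_m\le t<\sum_1^{N+1}s_m\Big\}\prod_{m=1}^N\phi_{d_m}(ds_m)\,\nu(ds_{N+1}),
\end{equation*}
where $\nu=\sum_d\nu_d$ is the marginal law of $\tau_1$ and the final factor is the overshoot term (with $\xi_{N+1}$ left free, hence no label dependence). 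The purpose of this factorization is that the probability splits into a ``label part'' $\prod_m\rho_{d_m}$ and a ``time part'' that depends on the sequence $d$ only through the measures $\phi_{d_m}$.

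Finally I would introduce the involution $\sigma$ that swaps the two symbols $c_k$ and $c_l$ in a sequence; this is a bijection from the sequences counted by the numerator (with $n_k$ copies of $c_k$ and $n_l$ of $c_l$) onto those counted by the denominator. For a fixed $d$ the time-part integral is literally unchanged under $\sigma$, since at every position $m$ one has $\phi_{d_m}=\phi_{\sigma(d)_m}$ by $\phi_{c_k}=\phi_{c_l}$; only the label part changes, and
\begin{equation*}
\frac{\prod_{m=1}^N\rho_{d_m}}{\prod_{m=1}^N\rho_{\sigma(d)_m}}=\left(\frac{\rho_{c_k}}{\rho_{c_l}}\right)^{n_k-n_l}=\left(\frac{\gamma^{c_k}}{\gamma^{c_l}}\right)^{n_k-n_l}.
\end{equation*}
Because this ratio is the same constant for every admissible $d$, summing over $d$ via the bijection $\sigma$ delivers the claimed identity. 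The main thing to get right is the bookkeeping of the factorization: one must check that the overshoot term $\nu(ds_{N+1})$ carries no label dependence (so it cancels), and one must note that the invariance of the time part is \emph{pointwise} in $m$, so it requires nothing beyond $\phi_{c_k}=\phi_{c_l}$ and in particular no symmetry of the time constraint under permutations of $s_1,\dots,s_N$.
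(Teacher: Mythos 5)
Your proof is correct and follows essentially the same route as the paper's: both decompose the event over the i.i.d.\ sequence of cycle labels produced by the Markov renewal structure of Lemma \ref{MRPcycle}, and both invoke Theorem \ref{Haldanecontinuous2} to split each term into a label factor (with ratio $\gamma^{c_k}/\gamma^{c_l}$ per occurrence) and a timing factor that does not depend on which of the two similar cycles is formed. The only difference is bookkeeping: the paper packages the timing-insensitivity as Lemma \ref{Haldaneuseful} and coarsens to the merged event $\{N^{c_k}_t+N^{c_l}_t=n_k+n_l\}$ with a binomial factor $C_{n_k+n_l}^{n_k}p^{n_k}q^{n_l}$, whereas you state it as $\phi_{c_k}=\phi_{c_l}$ and apply a label-swapping involution.
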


\begin{proof}
We only need to prove this lemma when $k = 1$ and $l = 2$. The proof of the other cases is totally the same. To simplify notations, let $N = n_1+\cdots+n_r$ and let
\begin{equation}
p = P_i(T^{c_1}<T^{c_2}),\;\;\;q = P_i(T^{c_2}<T^{c_1}).
\end{equation}
Let $T_n$ be the $n$-th forming time of $c_1,c_2,\cdots,c_r$ by $X$. Let $\tau_n = T_n-T_{n-1}$. Let $\xi_n$ be the random variable defined in \eqref{xi}. Then we have
\begin{eqnarray*}
&& P_i(N^{c_1}_t=n_1,\cdots,N^{c_r}_t=n_r) \\
&=& \sum_{A_1,\cdots,A_r}P_i(T_N\leq t<T_{N+1},\xi_m = c_1\;\textrm{for those}\;m\in A_1,\cdots,\xi_m = c_r\;\textrm{for those}\;m\in A_r),
\end{eqnarray*}
where the sequence $A_1,\cdots,A_r$ ranges over all partitions of $\{1,2,\cdots,N\}$ such that $\textrm{Card}(A_k)=n_k$ for each $1\leq k\leq r$. Then Lemma \ref{Haldaneuseful}, together with the fact that $(\xi_n,\tau_n)_{n\geq 1}$ is independent and identically distributed, shows that
\begin{eqnarray*}
&& P_i(N^{c_1}_t=n_1,N^{c_2}_t=n_2,N^{c_3}_t=n_3,\cdots,N^{c_r}_t=n_r) \\
&=& \sum_{A_1,\cdots,A_r}P_i(T_N\leq t<T_{N+1},\xi_m\in\{c_1,c_2\}\;\textrm{for those}\;m\in A_1\cup A_2, \\
&& \xi_m=c_3\;\textrm{for those}\;m\in A_3,\cdots,\xi_m = c_r\;\textrm{for those}\;m\in A_r)p^{n_1}q^{n_2} \\
&=& \sum_{B_1,\cdots,B_r}P_i(T_N\leq t<T_{N+1},\xi_m\in\{c_1,c_2\}\;\textrm{for those}\;m\in B_2, \\
&& \xi_m=c_3\;\textrm{for those}\;m\in B_3,\cdots,\xi_m = c_r\;\textrm{for those}\;m\in B_r)C_{n_1+n_2}^{n_1}p^{n_1}q^{n_2} \\
&=& P_i(N^{c_1}_t+N^{c_2}_t=n_1+n_2,N^{c_3}_t=n_3,\cdots,N^{c_r}_t=n_r)C_{n_1+n_2}^{n_1}p^{n_1}q^{n_2},
\end{eqnarray*}
where the sequence $B_2,\cdots,B_r$ ranges over all partitions of $\{1,2,\cdots,N\}$ such that $\textrm{Card}(B_2)=n_1+n_2$ and $\textrm{Card}(B_k)=n_k$ for each $3\leq k\leq r$. By Theorem \ref{Haldanecontinuous2}, it follows that
\begin{equation}
\begin{split}
& P_i(N^{c_1}_t=n_1,N^{c_2}_t=n_2,N^{c_3}_t=n_3,\cdots,N^{c_r}_t=n_r) \\
&= P_i(N^{c_1}_t+N^{c_2}_t=n_1+n_2,N^{c_3}_t=n_3,\cdots,N^{c_r}_t=n_r)C_{n_1+n_2}^{n_1}p^{n_1}q^{n_2} \\
&= P_i(N^{c_1}_t+N^{c_2}_t=n_1+n_2,N^{c_3}_t=n_3,\cdots,N^{c_r}_t=n_r) C_{n_1+n_2}^{n_1}p^{n_2}q^{n_1}\left(\frac{p}{q}\right)^{n_1-n_2} \\
&= P_i(N^{c_1}_t=n_2,N^{c_2}_t=n_1,N^{c_3}_t=n_3,\cdots,N^{c_r}_t=n_r) \left(\frac{\gamma^{c_1}}{\gamma^{c_2}}\right)^{n_1-n_2},
\end{split}
\end{equation}
which gives the desired result.
\end{proof}

The next lemma shows that the moment generating function of the empirical circulations has a certain symmetry.

\begin{lemma}\label{generating}
Let $c_1,c_2,\cdots,c_r$ be a family of cycles passing through a common state $i$. Assume that $c_k$ and $c_l$ are similar for some two indices $1\leq k,l\leq r$. Let
\begin{equation}
g_t(\lambda_1,\cdots,\lambda_r) = E_ie^{\lambda_1N^{c_1}_t+\cdots+\lambda_rN^{c_r}_t} = E_ie^{t(\lambda_1J^{c_1}_t+\cdots+\lambda_rJ^{c_r}_t)}.
\end{equation}
Then for each $t\geq 0$ and any $\lambda_1,\cdots,\lambda_r\in\mathbb{R}$,
\begin{equation}
\begin{split}
& g_t(\lambda_1,\cdots,\lambda_k,\cdots,\lambda_l,\cdots,\lambda_r) \\
&= g_t(\lambda_1,\cdots,\lambda_l-\log\frac{\gamma^{c_k}}{\gamma^{c_l}},\cdots,
\lambda_k+\log\frac{\gamma^{c_k}}{\gamma^{c_l}},\cdots,\lambda_r).
\end{split}
\end{equation}
\end{lemma}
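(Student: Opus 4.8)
The plan is to expand the moment generating function as a sum against the joint law of the cycle counts and then feed in the distributional symmetry already established in Lemma \ref{distribution}. First I would write
\[
g_t(\lambda_1,\cdots,\lambda_r) = \sum_{n_1,\cdots,n_r\in\mathbb{N}} e^{\lambda_1 n_1+\cdots+\lambda_r n_r}\, P_i(N^{c_1}_t=n_1,\cdots,N^{c_r}_t=n_r).
\]
Every summand here is nonnegative, so all of the rearrangements below are legitimate term by term by Tonelli's theorem, whether or not the series converges (the asserted identity is permitted to read $+\infty=+\infty$, so finiteness of $g_t$ need not be addressed separately).

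The central step is to invoke Lemma \ref{distribution}, which replaces the probability having $N^{c_k}_t=n_k$ and $N^{c_l}_t=n_l$ by the one with these two counts interchanged, at the cost of the factor $(\gamma^{c_k}/\gamma^{c_l})^{n_k-n_l}$. I would then absorb this factor into the exponential via $(\gamma^{c_k}/\gamma^{c_l})^{n_k-n_l}=e^{(n_k-n_l)\log(\gamma^{c_k}/\gamma^{c_l})}$, so that the weight of $n_k$ becomes $\lambda_k+\log(\gamma^{c_k}/\gamma^{c_l})$ and the weight of $n_l$ becomes $\lambda_l-\log(\gamma^{c_k}/\gamma^{c_l})$, while the probability now carries $N^{c_k}_t=n_l$ and $N^{c_l}_t=n_k$.

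Finally I would relabel the two dummy indices, setting $m_k=n_l$ and $m_l=n_k$ and leaving the others unchanged; since each runs over all of $\mathbb{N}$, this renaming does not alter the value of the sum. After the swap the probability returns to its canonical order $P_i(N^{c_1}_t=m_1,\cdots,N^{c_r}_t=m_r)$, the exponent carries $\lambda_l-\log(\gamma^{c_k}/\gamma^{c_l})$ in the $k$-th slot and $\lambda_k+\log(\gamma^{c_k}/\gamma^{c_l})$ in the $l$-th slot, and recognizing the resulting sum as $g_t$ evaluated at the shifted arguments gives precisely the claimed identity.

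I do not anticipate a genuine difficulty here: the entire argument is a bookkeeping rearrangement powered by Lemma \ref{distribution}. The only points demanding care are the justification of reordering and reindexing a possibly divergent series, which is clean because all terms are nonnegative (Tonelli), and the consistent tracking of which shifted $\lambda$ ultimately lands in which coordinate, so that the $k$-th and $l$-th arguments emerge exactly as stated rather than with the two shifts transposed.
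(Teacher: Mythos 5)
Your proposal is correct and follows essentially the same route as the paper: expand $g_t$ as a sum over the joint law of $(N^{c_1}_t,\cdots,N^{c_r}_t)$, apply Lemma \ref{distribution} to swap the $k$-th and $l$-th counts at the cost of the factor $(\gamma^{c_k}/\gamma^{c_l})^{n_k-n_l}$, absorb that factor into the exponent, and reidentify the resulting sum as $g_t$ at the shifted arguments. Your explicit relabeling of the dummy indices and the Tonelli remark for nonnegative terms are just careful bookkeeping of steps the paper performs implicitly, and your tracking of which shifted $\lambda$ lands in which slot agrees with the stated identity.
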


\begin{proof}
We only need to prove this lemma when $k = 1$ and $l = 2$. The proof of the other cases is totally the same. By Lemma \ref{distribution}, we have
\begin{eqnarray*}
&& g_t(\lambda_1,\lambda_2,\lambda_3\cdots,\lambda_r) = E_ie^{\lambda_1N^{c_1}_t+\lambda_2N^{c_2}_t+\lambda_3N^{c_3}_t+\cdots+\lambda_rN^{c_r}_t} \\
&=& \sum_{n_1,\cdots,n_r\in\mathbb{N}}e^{\lambda_1n_1+\cdots+\lambda_rn_r} P_i(N^{c_1}_t=n_1,N^{c_2}_t=n_2,N^{c_3}_t=n_3,\cdots,N^{c_r}_t=n_r) \\
&=& \sum_{n_1,\cdots,n_r\in\mathbb{N}}e^{\lambda_1n_1+\cdots+\lambda_rn_r}
P_i(N^{c_1}_t=n_2,N^{c_2}_t=n_1,N^{c_3}_t=n_3,\cdots,N^{c_r}_t=n_r)
\left(\frac{\gamma^{c_1}}{\gamma^{c_2}}\right)^{n_1-n_2} \\
&=& \sum_{n_1,\cdots,n_r\in\mathbb{N}}e^{(\lambda_1+\log\frac{\gamma^{c_1}}{\gamma^{c_2}})n_1+
(\lambda_2-\log\frac{\gamma^{c_1}}{\gamma^{c_2}})n_2+\lambda_3n_3+\cdots+\lambda_rn_r}
P_i(N^{c_1}_t=n_2,N^{c_2}_t=n_1,N^{c_3}_t=n_3,\cdots,N^{c_r}_t=n_r) \\
&=& E_ie^{(\lambda_2-\log\frac{\gamma^{c_1}}{\gamma^{c_2}})N^{c_1}_t+
(\lambda_1+\log\frac{\gamma^{c_1}}{\gamma^{c_2}})N^{c_2}_t\lambda_3N^{c_3}_t+\cdots+\lambda_rN^{c_r}_t} \\
&=& g_t(\lambda_2-\log\frac{\gamma^{c_1}}{\gamma^{c_2}},\lambda_1+\log\frac{\gamma^{c_1}}{\gamma^{c_2}},
\lambda_3,\cdots,\lambda_r),
\end{eqnarray*}
which gives the desired result.
\end{proof}

We also need the following lemma, whose original form is given by Varadhan \cite{varadhan1984large}.

\begin{lemma}\label{Varadhan}
Let $(\mu_t)_{t>0}$ be a sequence of probability measures on a Polish space $E$ which satisfies a large deviation principle with rate $t$ and good rate function $I:E\rightarrow[0,\infty]$. Let $F:E\rightarrow\mathbb{R}$ be a continuous function. Assume that there exists $\gamma>1$ such that the following moment condition is satisfied:
\begin{equation}
\limsup_{t\rightarrow\infty}\frac{1}{t}\log\int_Ee^{\gamma tF(x)}d\mu_t(x) < \infty.
\end{equation}
Then
\begin{equation}
\lim_{t\rightarrow\infty}\frac{1}{t}\log\int_Ee^{tF(x)}d\mu_t(x) = \sup_{x\in E}(F(x)-I(x)).
\end{equation}
\end{lemma}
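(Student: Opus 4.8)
The plan is to prove the two matching bounds for the upper and lower limits separately; since this is Varadhan's integral lemma, I would follow the classical two-sided argument, using the hypotheses exactly where they are needed.

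The lower bound requires no moment hypothesis. Fix any $x_0\in E$ and any open neighborhood $U$ of $x_0$. From $\int_E e^{tF}d\mu_t\geq e^{t\inf_U F}\mu_t(U)$, taking $\frac{1}{t}\log$ and invoking the open-set estimate (part (iii) of the large deviation principle) gives
\begin{equation}
\liminf_{t\rightarrow\infty}\frac{1}{t}\log\int_E e^{tF}d\mu_t \geq \inf_U F-\inf_U I.
\end{equation}
Letting $U$ shrink to $x_0$ and using the continuity of $F$ yields $\geq F(x_0)-I(x_0)$, and taking the supremum over $x_0$ gives the lower bound $\liminf\geq\sup_x(F(x)-I(x))$.

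The upper bound is the substantive half, and is where the moment condition enters. First I would record the tail estimate it provides: on $\{F\geq M\}$ one has $e^{tF}\leq e^{\gamma tF}e^{-(\gamma-1)tM}$ because $\gamma-1>0$, whence
\begin{equation}
\frac{1}{t}\log\int_{\{F\geq M\}}e^{tF}d\mu_t \leq -(\gamma-1)M+\frac{1}{t}\log\int_E e^{\gamma tF}d\mu_t,
\end{equation}
and the assumed finiteness of the $\limsup$ of the last term forces the left side to tend to $-\infty$ as $M\rightarrow\infty$. This reduces the problem to a continuous function bounded above, since $\int_{\{F<M\}}e^{tF}d\mu_t\leq\int_E e^{t(F\wedge M)}d\mu_t$ and $G:=F\wedge M$ is continuous with $\sup_x(G-I)\leq\sup_x(F-I)=:\Lambda$. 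For such a $G$ (bounded above by $M$) I would exploit that $I$ is a \emph{good} rate function, so its sublevel sets are compact and $I$ is lower semicontinuous. I would attach to each $x$ an open neighborhood $A_x$, shrunk so that simultaneously $\sup_{\overline{A_x}}G\leq G(x)+\delta$ (continuity of $G$) and $\inf_{\overline{A_x}}I\geq\min(I(x),1/\delta)-\delta$ (lower semicontinuity of $I$); a short case check shows that, once $\delta$ is small, the closed-set estimate (part (ii)) makes each piece $\overline{A_x}$ contribute at most $\Lambda+2\delta$. Covering the compact level set $\{I\leq\beta\}$ by finitely many $A_{x_1},\ldots,A_{x_n}$, the remaining closed set $C=E\setminus\bigcup_j A_{x_j}$ lies in $\{I>\beta\}$, and there $\int_C e^{tG}d\mu_t\leq e^{tM}\mu_t(C)$ together with part (ii) gives at most $M-\beta\leq\Lambda$ after choosing $\beta$ large. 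Since $\frac{1}{t}\log$ of a finite sum is asymptotically the maximum of the individual $\frac{1}{t}\log$'s, assembling the finitely many pieces and $C$ yields $\limsup_t\frac{1}{t}\log\int_E e^{tG}d\mu_t\leq\Lambda+2\delta$, and letting $\delta\downarrow 0$ completes the upper bound.

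The main obstacle is precisely the non-compactness of $E$ in the upper bound: the large deviation upper bound is available only on closed sets, so one cannot simply cover all of $E$ by neighborhoods with controlled oscillation. Two devices overcome this and must be combined carefully—the finite subcover of a compact sublevel set of the good rate function, and the moment condition that renders the large-$F$ tail negligible—while the choice of each neighborhood $A_x$ has to respect both the continuity bound on $G$ and the lower-semicontinuity bound on $I$ at the same time.
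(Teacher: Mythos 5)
Your argument is correct, but note that the paper does not actually prove this lemma: its ``proof'' consists of the single sentence that the result can be found in the reference \cite{dembo1998large}, so there is no in-paper argument to compare against. What you have written is essentially the standard Dembo--Zeitouni proof of Varadhan's integral lemma, and all the key moves are in the right place: the lower bound from the open-set estimate plus continuity of $F$; the tail estimate $e^{tF}\leq e^{\gamma tF}e^{-(\gamma-1)tM}$ on $\{F\geq M\}$, which is exactly where the moment condition with $\gamma>1$ is used; the truncation to $G=F\wedge M$; the simultaneous choice of neighborhoods controlling the oscillation of $G$ (continuity) and bounding $I$ from below (lower semicontinuity, with the $\min(I(x),1/\delta)$ device to handle $I(x)=\infty$); the finite subcover of the compact sublevel set $\{I\leq\beta\}$ of the good rate function; and the fact that $\frac{1}{t}\log$ of a finite sum is asymptotically the maximum of the summands. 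Two points that a full write-up should make explicit: first, the closure bounds $\sup_{\overline{A_x}}G\leq G(x)+\delta$ and $\inf_{\overline{A_x}}I\geq\min(I(x),1/\delta)-\delta$ require shrinking an open ball inside a slightly larger one on which the pointwise estimates hold (this is where the metric structure of the Polish space is used); second, the final assembly, namely
\begin{equation}
\limsup_{t\rightarrow\infty}\frac{1}{t}\log\int_Ee^{tF}d\mu_t\leq\max\Bigl(-(\gamma-1)M+C,\;\Lambda+2\delta\Bigr),
\end{equation}
followed by letting first $\delta\downarrow 0$ and then $M\rightarrow\infty$, is only implicit in your sketch and should be stated. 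Neither is a gap in the idea; the proof is sound and matches what the cited reference does.
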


\begin{proof}
The proof of this lemma can be found in \cite{dembo1998large}.
\end{proof}

Using the above lemma, we can obtain the following result.

\begin{lemma}\label{applicationVaradhan}
The notations are the same as in Lemma \ref{generating}. Then for each $\lambda\in\mathbb{R}^r$,
\begin{equation}
\lim_{t\rightarrow\infty}\frac{1}{t}\log g_t(\lambda) = \sup_{x\in\mathbb{R}^r}\{\lambda\cdot x-I^{c_1,c_2,\cdots,c_r}(x)\}.
\end{equation}
\end{lemma}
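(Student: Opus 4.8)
The plan is to obtain the identity as a direct application of Varadhan's lemma (Lemma~\ref{Varadhan}) to the family of laws $(\mu_t)_{t>0}$ of the empirical circulation vector $(J^{c_1}_t,\ldots,J^{c_r}_t)$ under $P_i$, taking for the continuous functional the linear map $F(x)=\lambda\cdot x$. By Theorem~\ref{LDPcirculation}, $(\mu_t)$ satisfies a large deviation principle on $\mathbb{R}^r$ with rate $t$ and good rate function $I^{c_1,\ldots,c_r}$, and $F$ is plainly continuous. Since
\[
g_t(\lambda)=E_i e^{\lambda_1 N^{c_1}_t+\cdots+\lambda_r N^{c_r}_t}=E_i e^{t\,\lambda\cdot(J^{c_1}_t,\ldots,J^{c_r}_t)}=\int_{\mathbb{R}^r}e^{tF(x)}\,d\mu_t(x),
\]
the desired conclusion $\lim_{t\to\infty}\frac1t\log g_t(\lambda)=\sup_{x}\{\lambda\cdot x-I^{c_1,\ldots,c_r}(x)\}$ follows the moment the remaining hypothesis of Lemma~\ref{Varadhan} is checked: that there is some $\gamma>1$ with $\limsup_{t\to\infty}\frac1t\log g_t(\gamma\lambda)<\infty$.

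Verifying this moment condition is the crux of the argument, and I expect it to be the only real obstacle. The key reduction is to dominate everything by the total cycle count. Write $N_t=\sum_{k=1}^r N^{c_k}_t$ for the number of jumps up to time $t$ of the Markov renewal process $(\xi_n,\tau_n)_{n\ge1}$ supplied by Lemma~\ref{MRPcycle}, and set $\theta=\gamma\max_k|\lambda_k|$. Since each $N^{c_k}_t\ge0$, we have $\gamma\sum_k\lambda_k N^{c_k}_t\le\theta N_t$, whence $g_t(\gamma\lambda)\le E_i e^{\theta N_t}$. It therefore suffices to bound $\frac1t\log E_i e^{\theta N_t}$ uniformly for large $t$ (the case $\lambda=0$, where $\theta=0$ and $g_t(0)=1$, being trivial).

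To control $E_i e^{\theta N_t}$ I would exploit that, under $P_i$, the pairs $(\xi_n,\tau_n)_{n\ge1}$ are i.i.d.\ (Lemma~\ref{MRPcycle}), so $N_t$ is an ordinary renewal counting process driven by i.i.d.\ strictly positive inter-arrival times $\tau_1,\tau_2,\ldots$. Letting $\hat F(s)=E_i e^{-s\tau_1}$ denote the Laplace transform of $\tau_1$, a Chernoff bound gives, for every $s>0$ and $n\ge0$,
\[
P_i(N_t\ge n)=P_i\!\Big(\sum_{k=1}^{n}\tau_k\le t\Big)\le e^{st}\,\hat F(s)^{\,n},
\]
so that $E_i e^{\theta N_t}\le e^{st}\sum_{n\ge0}\big(e^{\theta}\hat F(s)\big)^{n}$. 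Because $\tau_1>0$ almost surely (forming a cycle in a continuous-time chain takes strictly positive time), dominated convergence gives $\hat F(s)\to0$ as $s\to\infty$; hence I may fix $s$ large enough that $e^{\theta}\hat F(s)<1$, which yields $E_i e^{\theta N_t}\le C\,e^{st}$ with $C=(1-e^{\theta}\hat F(s))^{-1}<\infty$ independent of $t$. Consequently $\limsup_{t\to\infty}\frac1t\log E_i e^{\theta N_t}\le s<\infty$, establishing the moment condition for any $\gamma>1$.

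With all hypotheses in place, Lemma~\ref{Varadhan} applies to $(\mu_t)$, $F(x)=\lambda\cdot x$, and $I^{c_1,\ldots,c_r}$, and delivers
\[
\lim_{t\to\infty}\frac1t\log g_t(\lambda)=\lim_{t\to\infty}\frac1t\log\int_{\mathbb{R}^r}e^{tF(x)}\,d\mu_t(x)=\sup_{x\in\mathbb{R}^r}\{\lambda\cdot x-I^{c_1,\ldots,c_r}(x)\},
\]
which is exactly the claim. The delicate step is the uniform exponential bound on $N_t$, and the essential ingredient there is the strict positivity of the inter-arrival times $\tau_n$ forcing $\hat F(s)\to0$; the rest is a routine combination of the large deviation principle of Theorem~\ref{LDPcirculation} with the i.i.d.\ renewal structure of Lemma~\ref{MRPcycle}.
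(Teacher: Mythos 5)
Your proof is correct and follows the same overall strategy as the paper: apply Varadhan's lemma to the large deviation principle of Theorem \ref{LDPcirculation} with the continuous linear functional $x\mapsto\lambda\cdot x$, and reduce the moment condition to a uniform exponential moment bound on the total cycle count $N_t$ via $g_t(\gamma\lambda)\le E_ie^{\theta N_t}$ with $\theta=\gamma\max_k|\lambda_k|$. The only place where you genuinely diverge from the paper is in how that last bound is obtained. The paper observes that each cycle formation requires a full sojourn at the common state $i$, so the $n$-th forming time $T_n$ stochastically dominates a sum of $n$ independent exponential variables with rate $q_i$; hence $N_t$ is dominated by a Poisson variable with parameter $q_it$ and $E_ie^{\theta N_t}\le\exp\left((e^{\theta}-1)q_it\right)$. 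You instead run a Chernoff bound on the renewal process, using $P_i(N_t\ge n)\le e^{st}\hat F(s)^n$ together with the fact that $\hat F(s)=E_ie^{-s\tau_1}\to 0$ as $s\to\infty$ because $\tau_1>0$ almost surely, and then sum the resulting geometric series. Both verifications are valid. The paper's domination argument yields an explicit bound tied to the jump rate $q_i$ of the underlying chain, whereas your Chernoff argument is more generic: it uses only the i.i.d.\ renewal structure supplied by Lemma \ref{MRPcycle} and the strict positivity of the inter-arrival times, so it would apply verbatim to any renewal counting process, at the cost of a less explicit constant. (The degenerate case $\gamma^{c_k}=0$ for all $k$, where no cycle is ever formed and $g_t\equiv1$, is trivially covered by either route.)
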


\begin{proof}
Let $\lambda = (\lambda_1,\cdots,\lambda_r)$. By Theorem \ref{LDPcirculation}, the law of $(J^{c_1}_t,\cdots,J^{c_r}_t)$ satisfies a large deviation principle with rate $t$ and good rate function $I^{c_1,\cdots,c_r}$. By Lemma \ref{Varadhan}, the result of this lemma holds if the following moment condition is satisfied for each $\gamma>0$:
\begin{equation}
\limsup_{t\rightarrow\infty}\frac{1}{t}\log E_ie^{\gamma t(\lambda_1J^{c_1}_t+\cdots+\lambda_rJ^{c_r}_t)} < \infty.
\end{equation}
Note that
\begin{equation}
\begin{split}
& E_ie^{\gamma t(\lambda_1J^{c_1}_t+\cdots+\lambda_rJ^{c_r}_t)}
\leq E_ie^{\gamma|\lambda_1|N^{c_1}_t+\cdots+\gamma|\lambda_r|N^{c_r}_t} \\
&\leq E_ie^{\gamma\alpha(N^{c_1}_t+\cdots+N^{c_r}_t)}
= E_ie^{\gamma\alpha N_t},
\end{split}
\end{equation}
where $\alpha=\max\{|\lambda_1|,\cdots,|\lambda_r|\}$ and $N_t = \inf\{n\geq 0: T_{n+1}>t\}$ is the number of cycles $c_1,\cdots,c_r$ formed by $X$ up to time $t$. Since $X$ starts from $i$, in order to form any one of the cycles $c_1,\cdots,c_r$, $X$ must first leave state $i$. This shows that the $n$-th forming time $T_n$ of $c_1,\cdots,c_r$ by $X$ is larger than $n$ independent exponential random variables with the same rate $q_i$. where $q_i=\sum_{j\neq i}q_{ij}$. This further implies that $N_t$ is stochastically dominated by a Poisson random variable $R_t$ with parameter $q_it$. Thus we obtain that
\begin{equation}
\begin{split}
& E_ie^{\gamma\alpha N_t}
= \int_{-\infty}^\infty\gamma\alpha e^{\gamma\alpha x}P_i(N_t\geq x)dx
\leq \int_{-\infty}^\infty\gamma\alpha e^{\gamma\alpha x}P_i(R_t\geq x)dx \\
&= E_ie^{\gamma\alpha R_t} = \sum_{n=0}^\infty e^{\gamma\alpha n}\frac{(q_it)^n}{n!}e^{-q_it}
= \exp\left((e^{\gamma\alpha}-1)q_it\right).
\end{split}
\end{equation}
This shows that
\begin{equation}
\limsup_{t\rightarrow\infty}\frac{1}{t}\log E_ie^{\gamma t(\lambda_1J^{c_1}_t+\cdots+\lambda_rJ^{c_r}_t)}
\leq\limsup_{t\rightarrow\infty}\frac{1}{t}\log E_ie^{\gamma\alpha N_t} \leq (e^{\gamma\alpha}-1)q_i <\infty.
\end{equation}
This completes the proof of this lemma.
\end{proof}

\begin{remark}
Lemma \ref{applicationVaradhan} shows that
\begin{equation}
\lim_{t\rightarrow\infty}\frac{1}{t}\log g_t(\lambda) = (I^{c_1,c_2,\cdots,c_r})^*(\lambda),
\end{equation}
where $(I^{c_1,c_2,\cdots,c_r})^*$ is the Legendre-Fenchel transform of the rate function $I^{c_1,c_2,\cdots,c_r}$. Recall that the Legendre-Fenchel transform of a function $f:\mathbb{R}^r\rightarrow[-\infty,\infty]$ is a function $f^*:\mathbb{R}^r\rightarrow[-\infty,\infty]$ defined by
\begin{equation}
f^*(\lambda) = \sup_{x\in\mathbb{R}^r}\{\lambda\cdot x-F(x)\}.
\end{equation}
\end{remark}

The following lemma, which is called the Fenchel-Moreau theorem, gives the sufficient and necessary conditions under which the Legendre-Fenchel transform is an involution. Recall that a function $f:\mathbb{R}^r\rightarrow[-\infty,\infty]$ is called proper if $f(x)<\infty$ for at least one $x$ and $f(x)>-\infty$ for each $x$.

\begin{lemma}\label{involution}
Let $f:\mathbb{R}^r\rightarrow[-\infty,\infty]$ be a proper function. Then $f^{**}=f$ if and only if $f$ is convex and lower semi-continuous.
\end{lemma}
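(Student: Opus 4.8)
The plan is to prove the two implications separately, concentrating the real work in the sufficiency direction. For necessity, I would observe that $f^{**}$ is, by its very definition, the pointwise supremum of the affine functions $x\mapsto\lambda\cdot x-f^*(\lambda)$ indexed by $\lambda\in\mathbb{R}^r$. Each such function is continuous and convex, and a pointwise supremum of continuous convex functions is automatically convex and lower semi-continuous, since its epigraph is an intersection of closed half-spaces. Hence whenever $f=f^{**}$ holds, $f$ inherits convexity and lower semi-continuity, giving necessity at once.

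For sufficiency, I would first record the Young--Fenchel inequality $\lambda\cdot x\leq f(x)+f^*(\lambda)$, which is immediate from the definition of $f^*$ as a supremum; rearranging and taking the supremum over $\lambda$ yields the easy inequality $f^{**}\leq f$ with no hypotheses on $f$ whatsoever. The substance is the reverse inequality $f^{**}\geq f$ under the assumptions that $f$ is proper, convex, and lower semi-continuous. Here I would reinterpret $f^{**}$ geometrically: because an affine function $\lambda\cdot x-\beta$ minorizes $f$ precisely when $\beta\geq f^*(\lambda)$, the number $f^{**}(x)$ equals the value at $x$ of the upper envelope of all affine functions lying below $f$. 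Thus $f^{**}=f$ is equivalent to the assertion that a proper convex lower semi-continuous function coincides with the upper envelope of its affine minorants.

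To establish this envelope property I would argue by contradiction using separation. Lower semi-continuity together with convexity makes the epigraph $\mathrm{epi}\,f=\{(x,t):t\geq f(x)\}$ a nonempty closed convex subset of $\mathbb{R}^{r+1}$. If $f^{**}(x_0)<f(x_0)$ for some $x_0$, then a point $(x_0,s)$ with $f^{**}(x_0)<s<f(x_0)$ lies outside $\mathrm{epi}\,f$, and the Hahn--Banach separation theorem produces a hyperplane strictly separating it from the closed convex epigraph. Converting the separating functional into a genuine affine minorant of $f$ whose value at $x_0$ exceeds $f^{**}(x_0)$ then contradicts the envelope characterization, forcing $f^{**}\geq f$.

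The main obstacle is that the separating hyperplane may be \emph{vertical}, meaning its last coordinate coefficient vanishes, in which case it does not directly produce a finite-valued affine minorant; this degeneracy occurs exactly when $x_0$ sits on the boundary of the effective domain of $f$. The standard remedy, which I would follow, is to use properness to guarantee at least one honest non-vertical affine minorant of $f$, then perturb the vertical separating functional by a small multiple of this minorant to obtain a non-vertical separator that still works, and finally pass to the limit as the perturbation vanishes to recover the bound at boundary points. Alternatively, given that analogous background results in this paper are simply cited, one could invoke the Fenchel--Moreau theorem directly from a standard convex-analysis reference such as \cite{dembo1998large}.
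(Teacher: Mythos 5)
Your outline is correct, but it takes a genuinely different route from the paper only in the sense that the paper does not prove this lemma at all: its entire ``proof'' is the citation to \cite{borwein2010convex}, whereas you reconstruct the standard argument for the Fenchel--Moreau theorem. Your necessity direction (a pointwise supremum of affine functions is convex and lower semi-continuous), the Young--Fenchel inequality giving $f^{**}\leq f$ unconditionally, the reinterpretation of $f^{**}$ as the upper envelope of affine minorants, and the Hahn--Banach separation of a point $(x_0,s)$ with $f^{**}(x_0)<s<f(x_0)$ from the closed convex set $\mathrm{epi}\,f$ are all exactly the classical proof, including the correct identification of the only delicate point, the vertical separating hyperplane. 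Two small tightenings: first, the existence of at least one non-vertical affine minorant does not follow from properness alone but from a second application of the separation theorem to $\mathrm{epi}\,f$ (using convexity and lower semi-continuity as well), which is available here; second, the resolution of the vertical case is not really a limiting argument --- one adds a sufficiently large multiple of the vertical separating functional (which is nonpositive on $\mathrm{dom}\,f$ and strictly positive at $x_0$) to the non-vertical minorant, so that a single sufficiently extreme choice of the parameter already pushes the value at $x_0$ above $s$ and yields the contradiction. With those phrasings adjusted, your sketch is a complete and standard proof; given that the paper itself merely cites a convex-analysis text, your closing remark that one may simply invoke the Fenchel--Moreau theorem from a reference is precisely what the authors do.
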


\begin{proof}
The proof of this lemma can be found in \cite{borwein2010convex}.
\end{proof}

We are now in a position to prove the symmetry of the rate function $I^{c_1,c_2,\cdots,c_r}$.

\begin{proof}[Proof of Theorem \ref{symmetry}]
We only need to prove this theorem when $k = 1$ and $l = 2$. The proof of the other cases is totally the same. By Lemma \ref{applicationVaradhan}, we have
\begin{equation}
\lim_{t\rightarrow\infty}\frac{1}{t}\log g_t(\lambda_1,\cdots,\lambda_r) = (I^{c_1,\cdots,c_r})^*(\lambda_1,\cdots,\lambda_r).
\end{equation}
By Lemma \ref{generating}, we have
\begin{equation}
\begin{split}
g_t(\lambda_1,\lambda_2,\lambda_3,\cdots,\lambda_r) = g_t(\lambda_2-\log\frac{\gamma^{c_1}}{\gamma^{c_2}},\lambda_1+\log\frac{\gamma^{c_1}}{\gamma^{c_2}},
\lambda_3,\cdots,\lambda_r).
\end{split}
\end{equation}
Combining the above two equations, we obtain that
\begin{equation}
\begin{split}
& (I^{c_1,\cdots,c_r})^*(\lambda_1,\lambda_2,\lambda_3,\cdots,\lambda_r) \\
&= (I^{c_1,\cdots,c_r})^*(\lambda_2-\log\frac{\gamma^{c_1}}{\gamma^{c_2}},\lambda_1+
\log\frac{\gamma^{c_1}}{\gamma^{c_2}},\lambda_3,\cdots,\lambda_r).
\end{split}
\end{equation}
By Theorem \ref{LDPcirculation} and Lemma \ref{convex}, $I^{c_1,\cdots,c_r}$ is a good rate function which is also convex. This shows that $I^{c_1,\cdots,c_r}$ is proper, convex, and lower semi-continuous. By Lemma \ref{involution}, we obtain that $I^{c_1,\cdots,c_r}=(I^{c_1,\cdots,c_r})^{**}$. Thus we have
\begin{eqnarray*}
&& I^{c_1,\cdots,c_r}(x_1,x_2,x_3,\cdots,x_r)
= (I^{c_1,\cdots,c_r})^{**}(x_1,x_2,x_3,\cdots,x_r) \\
&=& \sup_{\lambda_1,\cdots,\lambda_r\in\mathbb{R}}\{\lambda_1x_1+\cdots+\lambda_rx_r - (I^{c_1,\cdots,c_r})^*(\lambda_1,\lambda_2,\lambda_3,\cdots,\lambda_r)\} \\
&=& \sup_{\lambda_1,\cdots,\lambda_r\in\mathbb{R}}\{\lambda_1x_1+\cdots+\lambda_rx_r - (I^{c_1,\cdots,c_r})^*(\lambda_2-\log\frac{\gamma^{c_1}}{\gamma^{c_2}},
\lambda_1+\log\frac{\gamma^{c_1}}{\gamma^{c_2}},\lambda_3,\cdots,\lambda_r)\} \\
&=& \sup_{\lambda_1,\cdots,\lambda_r\in\mathbb{R}}\{(\lambda_1-\log\frac{\gamma^{c_1}}{\gamma^{c_2}})x_1+
(\lambda_2+\log\frac{\gamma^{c_1}}{\gamma^{c_2}})x_2+\lambda_3x_3+\cdots+\lambda_rx_r - \\
&& (I^{c_1,\cdots,c_r})^*(\lambda_2,\lambda_1,\lambda_3,\cdots,\lambda_r)\} \\
&=& I^{c_1,\cdots,c_r}(x_2,x_1,x_3,\cdots,x_r)-\left(\log\frac{\gamma^{c_1}}{\gamma^{c_2}}\right)(x_1-x_2),
\end{eqnarray*}
which gives the desired result.
\end{proof}

Now that Theorem \ref{LDPcirculation} and Theorem \ref{symmetry} hold for similar cycles, they also hold for conjugate cycles. Thus we obtain the following corollary.

\begin{corollary}\label{LDPconjugate}
Let $c_1,c_2,\cdots,c_r$ be a family of cycles passing through a common state $i$. Then under $P_i$, the law of $(J^{c_1}_t,J^{c_1-}_t,\cdots,J^{c_r}_t,J^{c_r-}_t)$ satisfies a large deviation principle with rate $t$ and good rate function $I^{c_1,c_1-,\cdots,c_r,c_r-}:\mathbb{R}^{2r}\rightarrow[0,\infty]$. Moreover, for each $1\leq k\leq r$, the rate function $I^{c_1,c_1-,\cdots,c_r,c_r-}$ has the following symmetry:
\begin{equation}
\begin{split}
& I^{c_1,c_1-,\cdots,c_r,c_r-}(x_1,y_1,\cdots,x_k,y_k,\cdots,x_r,y_r) \\
&= I^{c_1,c_1-,\cdots,c_r,c_r-}(x_1,y_1,\cdots,y_k,x_k,\cdots,x_r,y_r)-
\left(\log\frac{\gamma^{c_k}}{\gamma^{c_k-}}\right)(x_k-y_k).
\end{split}
\end{equation}
\end{corollary}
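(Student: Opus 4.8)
The plan is to recognize the family $c_1,c_1-,\cdots,c_r,c_r-$ as a special case of the setup already treated in Theorem \ref{LDPcirculation} and Theorem \ref{symmetry}, and then simply invoke those results. To this end I would first relabel the $2r$ cycles as a single family $d_1,d_2,\cdots,d_{2r}$ by setting $d_{2k-1}=c_k$ and $d_{2k}=c_k-$ for each $1\leq k\leq r$.

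The first step is to verify the hypotheses of Theorem \ref{LDPcirculation}. Every $c_k$ passes through the common state $i$ by assumption; since a reversed cycle $c_k-$ passes through exactly the same set of states as $c_k$, it too passes through $i$. Hence all $2r$ cycles $d_1,\cdots,d_{2r}$ pass through the common state $i$, and Theorem \ref{LDPcirculation} applies verbatim to this family. This immediately yields that under $P_i$ the law of $(J^{d_1}_t,\cdots,J^{d_{2r}}_t) = (J^{c_1}_t,J^{c_1-}_t,\cdots,J^{c_r}_t,J^{c_r-}_t)$ satisfies a large deviation principle with rate $t$ and good rate function $I^{c_1,c_1-,\cdots,c_r,c_r-}:\mathbb{R}^{2r}\rightarrow[0,\infty]$.

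For the symmetry I would use the fact, already noted for conjugate cycles, that $c_k$ and $c_k-$ are similar. Thus within the family $d_1,\cdots,d_{2r}$ the two cycles $d_{2k-1}=c_k$ and $d_{2k}=c_k-$ are similar, so Theorem \ref{symmetry} applies with the pair of indices $2k-1$ and $2k$. Writing the argument of the rate function as $(x_1,y_1,\cdots,x_k,y_k,\cdots,x_r,y_r)$, so that $x_k$ sits in position $2k-1$ and $y_k$ in position $2k$, the symmetry of Theorem \ref{symmetry} swaps these two coordinates and contributes the correction term $-\left(\log\frac{\gamma^{d_{2k-1}}}{\gamma^{d_{2k}}}\right)(x_k-y_k) = -\left(\log\frac{\gamma^{c_k}}{\gamma^{c_k-}}\right)(x_k-y_k)$, which is exactly the claimed identity.

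Since both conclusions follow by direct substitution into already-established theorems, I expect essentially no obstacle beyond the bookkeeping of the relabeling. The only points requiring care are verifying that conjugate cycles are similar and that $c_k-$ shares the common state $i$ with $c_k$; both are immediate from the definitions, as a reversed cycle traverses the same set of states.
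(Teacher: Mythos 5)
Your proposal is correct and is exactly the argument the paper intends: the paper derives this corollary by observing that conjugate cycles are similar and that all $2r$ cycles $c_1,c_1-,\cdots,c_r,c_r-$ pass through the common state $i$, so Theorem \ref{LDPcirculation} and Theorem \ref{symmetry} apply directly to the enlarged family with the index pair corresponding to $c_k$ and $c_k-$. Your relabeling and verification of the hypotheses fill in precisely the bookkeeping the paper leaves implicit.
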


\begin{remark}
The generalized Haldane equalities characterize the symmetry of the forming times of a family of similar cycles. Theorem \ref{symmetry}, Lemma \ref{distribution}, and Lemma \ref{generating}, however, characterize the symmetry of the empirical circulations of a family of similar cycles from different aspects.
\end{remark}

\section{Applications in natural sciences}

\subsection{Applications in nonequilibrium statistical physics}\label{statisticalphysics}
Markov chains are widely used to model various kinds of stochastic systems in physics, chemistry, and biology. In nonequilibrium statistical physics, one of the most important physical quantities associated with a stochastic systems is the entropy production rate, which characterizes how much entropy is produced by the system per unit time. Several research groups \cite{lebowitz1999gallavotti, jiang2003entropy, seifert2005entropy} have studied the fluctuations of the empirical entropy production rate for stochastic systems modeled by Markov chains. Let $X=(X_t)_{t\geq 0}$ be an irreducible and recurrent continuous-time Markov chain with denumerable state space $\mathbb{S}$ and transition rate matrix $Q=(q_{ij})_{i,j\in\mathbb{S}}$. The empirical entropy production rate $W_t$ of $X$ up to time $t$ is defined as
\begin{equation}\label{entropyproduction}
\begin{split}
& W_t = \frac{1}{t}\log\frac{p_0(X_0)q_{\bar{X}_0\bar{X}_1}q_{\bar{X}_1\bar{X}_2}\cdots q_{\bar{X}_{\tilde{N}_t-1}\bar{X}_{\tilde{N}_t}}}{p_t(X_t)q_{\bar{X}_1\bar{X}_0}q_{\bar{X}_2\bar{X}_1}\cdots q_{\bar{X}_{\tilde{N}_t}\bar{X}_{\tilde{N}_t-1}}} = \frac{1}{t}\log\frac{p_0(X_0)}{p_t(X_t)} + \frac{1}{t}\sum_{i=0}^{\tilde{N}_t-1}\log\frac{q_{\bar{X}_i\bar{X}_{i+1}}}{q_{\bar{X}_{i+1}\bar{X}_i}},
\end{split}
\end{equation}
where $p_t = (p_t(i))_{i\in\mathbb{S}}$ is the distribution of $X$ at time $t$, $\bar{X} = (\bar{X}_n)_{n\geq 0}$ is the embedded chain of $X$, and $\tilde{N}_t$ is the number of jumps of $X$ up to time $t$. Physicists found that the empirical entropy production rate $W_t$ satisfies various kinds of fluctuation theorems. This discovery has been considered one of the most important results in nonequilibrium statistical physics in the last two decades.

Recently, physicists \cite{jiang2004mathematical, seifert2012stochastic} found that the empirical entropy production rate of Markov chains can be decomposed into different cycles. Specifically, the empirical entropy production rate $W_t$ can be decomposed as
\begin{equation}
W_t = \frac{1}{2}\sum_{c}K^c_t\log\frac{\gamma^c}{\gamma^{c-}} + W^r_t,
\end{equation}
where $c$ ranges over all cycles, $K^c_t$ is the empirical net circulation of cycle $c$ (see Definition \ref{empiricalcirculation}), $\gamma^c$ is the strength of cycle $c$, and the remainder $W^r_t$ collects the contributions of those state transitions that do not form a full cycle. This shows that the empirical net circulation $K^c_t$ of cycle $c$ is proportional to the entropy production rate of $X$ along cycle $c$. Thus it is nature to ask whether we can establish fluctuation theorems of the empirical net circulations for general Markov chains.

Fortunately, the generalized Haldane equalities established in this paper can be used to study the circulation fluctuations for Markov chains. To make the readers understand the relations between our work and nonequilibrium statistical physics, we briefly state various types of fluctuation theorems for the empirical net circulations.

We first give the definition of the affinities of cycles for Markov chains \cite{seifert2012stochastic}.

\begin{definition}\label{affinity}
Let $c$ be a cycle. Then the affinity $\rho^c$ of cycle $c$ is defined as
\begin{equation}
\rho^c = \log\frac{\gamma^c}{\gamma^{c-}}.
\end{equation}
\end{definition}

Theorems of the following type are called transient fluctuation theorems in nonequilibrium statistical physics. Transient fluctuation theorems give the probability ratio of observing trajectories that satisfy or violate the second law of thermodynamics.

\begin{theorem}\label{transient}
Let $c_1,c_2,\cdots,c_r$ be a family of cycles passing through a common state $i$. Then for any $n_1,n_2,\cdots,n_r\in\mathbb{Z}$,
\begin{equation}
\frac{P_i(K^{c_1}_t=n_1/t,\cdots,K^{c_k}_t=n_k/t,\cdots,K^{c_r}_t=n_r/t)}
{P_i(K^{c_1}_t=n_1/t,\cdots,K^{c_k}_t=-n_k/t,\cdots,K^{c_r}_t=n_r/t)} = e^{n_k\rho^{c_k}}.
\end{equation}
\end{theorem}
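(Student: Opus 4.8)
The plan is to reduce everything to Lemma \ref{distribution}, applied not to the original family but to each conjugate pair $(c_k,c_k-)$. The starting point is to unfold the net-circulation events into events on the raw cycle counts. By Definition \ref{empiricalcirculation}, the condition $K^{c_j}_t=n_j/t$ is exactly $N^{c_j}_t-N^{c_j-}_t=n_j$. First I would pass to the extended family $c_1,c_1-,c_2,c_2-,\cdots,c_r,c_r-$, which still passes through the common state $i$ because $c_j-$ visits the same states as $c_j$; since conjugate cycles are similar, the pair $(c_k,c_k-)$ furnishes exactly the two similar indices that Lemma \ref{distribution} requires.

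Next I would write both sides as sums over the raw counts. Writing $m_j=N^{c_j}_t$ and $\bar m_j=N^{c_j-}_t$, the numerator is
\begin{equation*}
P_i\!\left(\bigcap_{j=1}^r\{K^{c_j}_t=n_j/t\}\right)
= \sum_{\substack{m_j,\bar m_j\geq 0\\ m_j-\bar m_j=n_j}}
P_i\!\left(\bigcap_{j=1}^r\{N^{c_j}_t=m_j,\,N^{c_j-}_t=\bar m_j\}\right),
\end{equation*}
and the denominator is the same sum but with the constraint on the $k$-th pair changed to $m_k-\bar m_k=-n_k$. The key move is the involution that swaps the $k$-th conjugate counts: send each summand $(\cdots,m_k,\bar m_k,\cdots)$ of the numerator to $(\cdots,\bar m_k,m_k,\cdots)$. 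Since $m_k-\bar m_k=n_k$ becomes $\bar m_k-m_k=-n_k$, this is a bijection from the index set of the numerator sum onto that of the denominator sum, fixing all coordinates $j\neq k$.

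Then I would invoke Lemma \ref{distribution} with the two similar indices taken to be the positions of $c_k$ and $c_k-$ in the extended family. For each matched pair of summands it gives
\begin{equation*}
\frac{P_i(\cdots,N^{c_k}_t=m_k,N^{c_k-}_t=\bar m_k,\cdots)}
{P_i(\cdots,N^{c_k}_t=\bar m_k,N^{c_k-}_t=m_k,\cdots)}
=\left(\frac{\gamma^{c_k}}{\gamma^{c_k-}}\right)^{m_k-\bar m_k}
=\left(\frac{\gamma^{c_k}}{\gamma^{c_k-}}\right)^{n_k}
= e^{n_k\rho^{c_k}},
\end{equation*}
using Definition \ref{affinity} at the last step. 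Because the exponent $m_k-\bar m_k$ is pinned to $n_k$ throughout the numerator sum, this ratio is the same constant $e^{n_k\rho^{c_k}}$ for every matched pair, so it factors out of the sum and the numerator equals $e^{n_k\rho^{c_k}}$ times the denominator, which is the claim.

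The only delicate points, rather than genuine obstacles, are bookkeeping ones: checking that the swap is an exact bijection between the two constrained index sets (it is, since only the $k$-th pair's constraint differs and the involution precisely reverses it), and that the ratio supplied by Lemma \ref{distribution} is uniform over summands (it is, since the exponent depends only on the fixed difference $n_k$). Degenerate cases where $\gamma^{c_k-}=0$, so that $\rho^{c_k}$ is not finite, are excluded exactly as in the earlier convention that ratios of vanishing probabilities are read trivially; there both sides force $N^{c_k-}_t=0$ and the identity is vacuous.
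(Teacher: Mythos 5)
Your proposal is correct and follows essentially the same route as the paper: both expand the net-circulation events as sums over the joint counts of the extended family $c_1,c_1-,\cdots,c_r,c_r-$, apply Lemma \ref{distribution} to the conjugate (hence similar) pair $(c_k,c_k-)$ summand by summand, and use the fact that the exponent $m_k-\bar m_k$ is pinned to $n_k$ so the factor $e^{n_k\rho^{c_k}}$ is uniform and pulls out of the sum. Your explicit remarks on the bijection between index sets and on the degenerate case $\gamma^{c_k-}=0$ are sound bookkeeping that the paper leaves implicit.
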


\begin{proof}
We only need to prove this theorem when $k = 1$. The proof of the other cases is totally the same. By Lemma \ref{distribution}, we have
\begin{eqnarray*}
&& P_i(K^{c_1}_t=n_1/t,\cdots,K^{c_k}_t=n_k/t,\cdots, K^{c_r}_t=n_r/t) \\
&=& P_i(N^{c_1}_t-N^{c_1-}_t=n_1,N^{c_2}_t-N^{c_2-}_t=n_2,\cdots,
N^{c_r}_t-N^{c_r-}_t=n_r) \\
&=& \sum_{l_1-m_1=n_1,\cdots,l_r-m_r=n_r}P_i(N^{c_1}_t=l_1,N^{c_1-}_t=m_1,N^{c_2}_t=l_2,N^{c_2-}_t=m_2,\cdots, \\
&& N^{c_r}_t=l_r,N^{c_r-}_t=m_r) \\
&=& \sum_{l_1-m_1=n_1,\cdots,l_r-m_r=n_r}P_i(N^{c_1}_t=m_1,N^{c_1-}_t=l_1,N^{c_2}_t=l_2,N^{c_2-}_t=m_2,\cdots, \\
&& N^{c_r}_t=l_r,N^{c_r-}_t=m_r)e^{(l_1-m_1)\rho^{c_1}} \\
&=& \sum_{l_1-m_1=-n_1,\cdots,l_r-m_r=n_r}P_i(N^{c_1}_t=l_1,N^{c_1-}_t=m_1,N^{c_2}_t=l_2,N^{c_2-}_t=m_2,\cdots,\\
&& N^{c_r}_t=l_r,N^{c_r-}_t=m_r)e^{n_1\rho^{c_1}} \\
&=& P_i(N^{c_1}_t-N^{c_1-}_t=-n_1,N^{c_2}_t-N^{c_2-}_t=n_2,\cdots,
N^{c_r}_t-N^{c_r-}_t=n_r)e^{n_1\rho^{c_1}} \\
&=& P_i(K^{c_1}_t=n_1/t,\cdots,K^{c_k}_t=-n_k/t,\cdots,K^{c_r}_t=n_r/t)e^{n_1\rho^{c_1}},
\end{eqnarray*}
which gives the desired result.
\end{proof}

Theorems of the following type are called Kurchan-Lebowitz-Spohn-type fluctuation theorems in nonequilibrium statistical physics.

\begin{theorem}\label{Kurchan}
Let $c_1,c_2,\cdots,c_r$ be a family of cycles passing through a common state $i$. Let
\begin{equation}
h_t(\lambda_1,\cdots,\lambda_r) = E_ie^{t(\lambda_1K^{c_1}_t+\cdots+\lambda_rK^{c_r}_t)}.
\end{equation}
Then for each $t\geq 0$ and any $\lambda_1,\cdots,\lambda_r\in\mathbb{R}$,
\begin{equation}
\begin{split}
& h_t(\lambda_1,\cdots,\lambda_k,\cdots,\lambda_r) = h_t(\lambda_1,\cdots,-(\lambda_k+\rho^{c_k}),\cdots,\lambda_r).
\end{split}
\end{equation}
\end{theorem}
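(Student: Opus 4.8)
The plan is to expand the moment generating function $h_t$ as an absolutely convergent sum over the integer values of the net cycle counts, apply the transient fluctuation theorem (Theorem \ref{transient}) to reverse the sign of the $k$-th net circulation inside each summand, and then reindex the sum. Since the claimed identity is symmetric under relabeling of the cycles, it suffices to treat the case $k=1$, exactly as in the proof of Theorem \ref{transient}.

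First I would record that $tK^{c_j}_t = N^{c_j}_t - N^{c_j-}_t$ takes values in $\mathbb{Z}$, so that
\begin{equation*}
h_t(\lambda_1,\cdots,\lambda_r) = \sum_{n_1,\cdots,n_r\in\mathbb{Z}} e^{\lambda_1 n_1+\cdots+\lambda_r n_r}\, P_i\!\left(K^{c_1}_t=n_1/t,\cdots,K^{c_r}_t=n_r/t\right).
\end{equation*}
Since $|tK^{c_j}_t|\leq N_t$ and $N_t$ is stochastically dominated by a Poisson random variable (exactly the moment bound established inside the proof of Lemma \ref{applicationVaradhan}), this series converges absolutely for every $\lambda\in\mathbb{R}^r$; this is what licenses the term-by-term manipulation below.

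Next, taking $k=1$, Theorem \ref{transient} gives, for each fixed integer tuple,
\begin{equation*}
P_i\!\left(K^{c_1}_t=n_1/t,K^{c_2}_t=n_2/t,\cdots\right) = P_i\!\left(K^{c_1}_t=-n_1/t,K^{c_2}_t=n_2/t,\cdots\right) e^{n_1\rho^{c_1}}.
\end{equation*}
I would substitute this into the sum and then perform the change of summation variable $m_1=-n_1$, which is a bijection of $\mathbb{Z}$ onto itself and leaves the remaining indices untouched. Under this substitution the exponential weight $e^{\lambda_1 n_1}e^{n_1\rho^{c_1}}$ becomes $e^{-(\lambda_1+\rho^{c_1})m_1}$, while the probability becomes $P_i(K^{c_1}_t=m_1/t,\cdots)$. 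Collecting terms then recognizes the result as $h_t(-(\lambda_1+\rho^{c_1}),\lambda_2,\cdots,\lambda_r)$, which is precisely the asserted identity for $k=1$.

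The substantive content is carried entirely by Theorem \ref{transient}, so I do not anticipate any genuine obstacle. The one point that deserves explicit mention is the absolute convergence of the defining series, which is needed to justify the reindexing; this follows immediately from the Poisson domination of $N_t$ already available, and the remainder of the argument is a direct computation.
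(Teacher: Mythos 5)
Your proposal is correct and follows essentially the same route as the paper's proof: expand $h_t$ as a sum over integer values of the net cycle counts, apply Theorem \ref{transient} to flip the sign of the first coordinate, and reindex. Your explicit justification of absolute convergence via the Poisson domination of $N_t$ is a small addition the paper leaves implicit, but it does not change the argument.
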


\begin{proof}
We only need to prove this theorem when $k = 1$. The proof of the other cases is totally the same. By Theorem \ref{transient}, we have
\begin{eqnarray*}
&& h_t(\lambda_1,\lambda_2,\cdots,\lambda_r) = E_ie^{t(\lambda_1K^{c_1}_t+\cdots+\lambda_rK^{c_r}_t)} \\
&=& \sum_{n_1,\cdots,n_r\in\mathbb{Z}}e^{\lambda_1n_1+\lambda_2n_2+\cdots+\lambda_rn_r} P_i(K^{c_1}_t=n_1/t,K^{c_2}_t=n_2/t,\cdots,K^{c_r}_t=n_r/t) \\
&=& \sum_{n_1,\cdots,n_r\in\mathbb{Z}}e^{\lambda_1n_1+\lambda_2n_2+\cdots+\lambda_rn_r}
P_i(K^{c_1}_t=-n_1/t,K^{c_2}_t=n_2/t,\cdots,K^{c_r}_t=n_r/t)e^{n_1\rho^{c_1}} \\
&=& \sum_{n_1,\cdots,n_r\in\mathbb{Z}}e^{(\lambda_1+\rho^{c_1})n_1+\lambda_2n_2+\cdots+\lambda_rn_r} P_i(K^{c_1}_t=-n_1/t,K^{c_2}_t=n_2/t,\cdots,K^{c_r}_t=n_r/t) \\
&=& E_ie^{t(-(\lambda_1+\rho^{c_1})K^{c_1}_t+\lambda_2K^{c_2}_t+\cdots+\lambda_rK^{c_r}_t})
= h_t(-(\lambda_1+\rho^{c_1}),\lambda_2,\cdots,\lambda_r),
\end{eqnarray*}
which gives the desired result.
\end{proof}

Theorems of the following type are called integral fluctuation theorems in nonequilibrium statistical physics.

\begin{theorem}
Let $c_1,c_2,\cdots,c_r$ be a family of cycles passing through a common state $i$. Then for each $t\geq 0$,
\begin{equation}
E_ie^{-t(K^{c_1}_t\rho^{c_1}+K^{c_2}_t\rho^{c_2}+\cdots+K^{c_r}_t\rho^{c_r})} = 1.
\end{equation}
\end{theorem}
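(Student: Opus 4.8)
The plan is to recognize the left-hand side as the moment generating function $h_t$ of Theorem \ref{Kurchan} evaluated at a special point, and then to collapse it to $1$ by peeling off the coordinates one at a time via the Kurchan–Lebowitz–Spohn symmetry. Writing $\rho^{c_k} = \log(\gamma^{c_k}/\gamma^{c_k-})$ as in Definition \ref{affinity}, the quantity to be computed is exactly
\begin{equation*}
E_ie^{-t(K^{c_1}_t\rho^{c_1}+\cdots+K^{c_r}_t\rho^{c_r})} = h_t(-\rho^{c_1},-\rho^{c_2},\cdots,-\rho^{c_r}),
\end{equation*}
so the whole statement reduces to showing that $h_t$ takes the value $1$ at the point $(-\rho^{c_1},\cdots,-\rho^{c_r})$.

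The key observation is that the symmetry of Theorem \ref{Kurchan} has a convenient fixed point: applying it in the $k$-th coordinate sends $\lambda_k$ to $-(\lambda_k+\rho^{c_k})$, and the choice $\lambda_k=-\rho^{c_k}$ is mapped precisely to $0$. Hence, starting from $h_t(-\rho^{c_1},\cdots,-\rho^{c_r})$ and applying the symmetry successively in the coordinates $k=1,2,\cdots,r$, each step replaces one entry $-\rho^{c_k}$ by $0$ while leaving the remaining entries untouched. After $r$ steps every argument has been zeroed, giving
\begin{equation*}
h_t(-\rho^{c_1},\cdots,-\rho^{c_r}) = h_t(0,-\rho^{c_2},\cdots,-\rho^{c_r}) = \cdots = h_t(0,0,\cdots,0).
\end{equation*}
Finally, $h_t(0,\cdots,0) = E_ie^{0} = 1$ directly from the definition of $h_t$, which completes the argument.

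There is essentially no hard step here; the integral fluctuation theorem is a by-product of the Kurchan–Lebowitz–Spohn symmetry, and the only thing to verify is that the iterated substitution is legitimate, i.e.\ that at each stage the point to which Theorem \ref{Kurchan} is applied genuinely carries $-\rho^{c_k}$ in its $k$-th slot (which holds because earlier steps only affect coordinates $1,\cdots,k-1$). As an alternative one could bypass $h_t$ and argue directly from the transient fluctuation theorem (Theorem \ref{transient}): flipping the sign of all $r$ coordinates in turn yields $P_i(tK^{c_1}_t=n_1,\cdots,tK^{c_r}_t=n_r) = e^{\sum_k n_k\rho^{c_k}}\,P_i(tK^{c_1}_t=-n_1,\cdots,tK^{c_r}_t=-n_r)$, after which multiplying by $e^{-\sum_k n_k\rho^{c_k}}$, summing over all $(n_1,\cdots,n_r)\in\mathbb{Z}^r$, and reindexing $n_k\mapsto -n_k$ turns the sum into the total mass of the law of $(tK^{c_1}_t,\cdots,tK^{c_r}_t)$, which is $1$. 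I expect the only care needed is the routine bookkeeping of signs, so I would present the $h_t$ version as the main line for brevity.
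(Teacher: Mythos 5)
Your proof is correct and follows essentially the same route as the paper: the paper likewise invokes the Kurchan--Lebowitz--Spohn symmetry of $h_t$ (applied in all coordinates) and evaluates at $\lambda_k=-\rho^{c_k}$, which is exactly your fixed-point observation. Your coordinate-by-coordinate iteration just makes explicit the step the paper leaves implicit in writing the simultaneous symmetry, so the two arguments are the same in substance.
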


\begin{proof}
By Theorem \ref{Kurchan}, for any $\lambda_1,\cdots,\lambda_r\in\mathbb{R}$,
\begin{equation}
E_ie^{t(\lambda_1K^{c_1}_t+\cdots+\lambda_rK^{c_r}_t)} = E_ie^{-t((\lambda_1+\rho^{c_1})K^{c_1}_t+\cdots+(\lambda_r+\rho^{c_r})K^{c_r}_t)}.
\end{equation}
If we take $\lambda_k=-\rho^{c_k}$ for each $k$ in the above equation, we obtain the desired result.
\end{proof}

The large deviations of the empirical net circulations and the symmetry of the rate function are stated in the following theorem. Theorems of the following type are called Gallavotti-Cohen-type fluctuation theorems in nonequilibrium statistical physics.

\begin{theorem}\label{LDPnetcirculation}
Let $c_1,c_2,\cdots,c_r$ be a family of cycles passing through a common state $i$. Then under $P_i$, the law of $(K^{c_1}_t,K^{c_2}_t,\cdots,K^{c_r}_t)$ satisfies a large deviation principle with rate $t$ and good rate function $I^{c_1,c_2,\cdots,c_r}_K:\mathbb{R}^r\rightarrow[0,\infty]$. Moreover, for each $1\leq k\leq r$, the rate function $I^{c_1,c_2,\cdots,c_r}_K$ has the following symmetry:
\begin{equation}
I^{c_1,c_2,\cdots,c_r}_K(x_1,\cdots,x_k,\cdots,x_r) = I^{c_1,c_2,\cdots,c_r}_K(x_1,\cdots,-x_k,\cdots,x_r)-\rho^{c_k}x_k.
\end{equation}
\end{theorem}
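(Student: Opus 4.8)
The plan is to derive both assertions directly from Corollary \ref{LDPconjugate}, which already supplies the joint large deviation principle for the circulations of the conjugate family $c_1,c_1-,\cdots,c_r,c_r-$ together with the swap-symmetry of its rate function. Write $\tilde{I}:=I^{c_1,c_1-,\cdots,c_r,c_r-}$ for the good rate function on $\mathbb{R}^{2r}$ furnished by that corollary, and recall from Definition \ref{affinity} that $\rho^{c_k}=\log(\gamma^{c_k}/\gamma^{c_k-})$, so that the symmetry asserted there reads
\begin{equation}\label{eq:plan-tildesym}
\tilde{I}(x_1,y_1,\cdots,x_k,y_k,\cdots,x_r,y_r)=\tilde{I}(x_1,y_1,\cdots,y_k,x_k,\cdots,x_r,y_r)-\rho^{c_k}(x_k-y_k).
\end{equation}
Since conjugate cycles are always similar, \eqref{eq:plan-tildesym} is available for every $1\leq k\leq r$.

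For the large deviation principle I would introduce the linear, hence continuous, map $G:\mathbb{R}^{2r}\to\mathbb{R}^r$ given by $G(x_1,y_1,\cdots,x_r,y_r)=(x_1-y_1,\cdots,x_r-y_r)$. By Definition \ref{empiricalcirculation} one has $(K^{c_1}_t,\cdots,K^{c_r}_t)=G(J^{c_1}_t,J^{c_1-}_t,\cdots,J^{c_r}_t,J^{c_r-}_t)$, so the contraction principle, applied exactly as in the proof of Theorem \ref{LDPcirculation}, immediately gives that under $P_i$ the law of $(K^{c_1}_t,\cdots,K^{c_r}_t)$ satisfies a large deviation principle with rate $t$ and good rate function
\begin{equation}\label{eq:plan-IKrep}
I_K^{c_1,\cdots,c_r}(z_1,\cdots,z_r)=\inf\bigl\{\tilde{I}(x_1,y_1,\cdots,x_r,y_r):\,x_j-y_j=z_j\text{ for all }1\leq j\leq r\bigr\}.
\end{equation}
Goodness of $I_K^{c_1,\cdots,c_r}$ is inherited automatically from the contraction principle, so the first assertion of the theorem needs no further work.

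The symmetry is then obtained as a change of variables inside \eqref{eq:plan-IKrep}. Fixing $k$, I would apply \eqref{eq:plan-tildesym} to the $k$-th coordinate pair; on the constraint set $x_k-y_k=z_k$ is constant, so the affine correction factors out of the infimum as $-\rho^{c_k}z_k$, leaving $\inf\tilde{I}(\cdots,y_k,x_k,\cdots)$. Introducing the swap map $S_k$ that exchanges $(x_k,y_k)\mapsto(y_k,x_k)$ and fixes the other pairs, one has $\tilde{I}(\cdots,y_k,x_k,\cdots)=\tilde{I}\circ S_k$, and $S_k$ is a bijection of $\mathbb{R}^{2r}$ carrying the constraint set $\{x_j-y_j=z_j\ \forall j\}$ onto $\{x'_j-y'_j=z_j\ (j\neq k),\ x'_k-y'_k=-z_k\}$. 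Hence the surviving infimum equals $I_K^{c_1,\cdots,c_r}(z_1,\cdots,-z_k,\cdots,z_r)$, and we conclude
\begin{equation}
I_K^{c_1,\cdots,c_r}(z_1,\cdots,z_k,\cdots,z_r)=I_K^{c_1,\cdots,c_r}(z_1,\cdots,-z_k,\cdots,z_r)-\rho^{c_k}z_k,
\end{equation}
which is the desired Gallavotti--Cohen symmetry.

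The genuine analytic content has already been absorbed into Theorem \ref{symmetry} and Corollary \ref{LDPconjugate}, so the only real obstacle here is the bookkeeping in the last step: one must check carefully that $S_k$ maps the two constraint slices bijectively and that the factor $-\rho^{c_k}z_k$ is truly constant on the slice over which the infimum is taken. An alternative route, parallel to the proof of Theorem \ref{symmetry}, would avoid \eqref{eq:plan-IKrep} entirely: establish the large deviation principle for $(K^{c_1}_t,\cdots,K^{c_r}_t)$ as above, verify convexity and lower semicontinuity of $I_K^{c_1,\cdots,c_r}$, apply Varadhan's lemma (Lemma \ref{Varadhan}) with a Poisson-domination moment bound as in Lemma \ref{applicationVaradhan} to identify $\lim_t \frac{1}{t}\log h_t(\lambda)=(I_K^{c_1,\cdots,c_r})^*(\lambda)$, transfer the Kurchan--Lebowitz--Spohn symmetry of $h_t$ from Theorem \ref{Kurchan} to $(I_K^{c_1,\cdots,c_r})^*$, and finally invoke the Fenchel--Moreau involution (Lemma \ref{involution}); I regard the contraction argument above as the cleaner of the two.
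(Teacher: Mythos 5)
Your proposal is correct and follows essentially the same route as the paper: apply the contraction principle to the joint LDP of $(J^{c_1}_t,J^{c_1-}_t,\cdots,J^{c_r}_t,J^{c_r-}_t)$ from Corollary \ref{LDPconjugate} via the difference map, then substitute the swap symmetry of $I^{c_1,c_1-,\cdots,c_r,c_r-}$ into the variational formula for $I_K^{c_1,\cdots,c_r}$, noting that $-\rho^{c_k}(x_k-y_k)=-\rho^{c_k}z_k$ is constant on the constraint slice. The paper's proof is exactly your contraction argument (not the alternative Varadhan/Fenchel--Moreau route you mention), so no further comment is needed.
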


\begin{proof}
We only need to prove this theorem when $k = 1$. The proof of the other cases is totally the same. Let $F:\mathbb{R}^{2r}\rightarrow\mathbb{R}^r$ be a continuous map defined as
\begin{equation}
F(x_1,y_1,\cdots,x_r,y_r) = (x_1-y_1,\cdots,x_r-y_r).
\end{equation}
Then we have
\begin{equation}
F(J^{c_1}_t,J^{c_1-}_t,\cdots,J^{c_r}_t,J^{c_r-}_t) = (K^{c_1}_t,\cdots,K^{c_r}_t).
\end{equation}
By Corollary \ref{LDPconjugate}, the law of $(J^{c_1}_t,J^{c_1-}_t,\cdots,J^{c_r}_t,J^{c_r-}_t)$ satisfies a large deviation principle with rate $t$ and good rate function $I^{c_1,c_1-,\cdots,c_r,c_r-}$. Using the contraction principle, we see that the law of $(K^{c_1}_t,\cdots,K^{c_r}_t)$ satisfies a large deviation principle with rate $t$ and good rate function
\begin{equation}
I^{c_1,\cdots,c_r}_K(z_1,\cdots,z_r) = \inf_{x_1-y_1=z_1,\cdots,x_r-y_r=z_r}I^{c_1,c_1-,\cdots,c_r,c_r-}(x_1,y_1,\cdots,x_r,y_r).
\end{equation}
Thus we have
\begin{eqnarray*}
&& I^{c_1,\cdots,c_r}_K(z_1,z_2,\cdots,z_r) \\
&=& \inf_{x_1-y_1=z_1,\cdots,x_r-y_r=z_r}I^{c_1,c_1-,\cdots,c_r,c_r-}(x_1,y_1,x_2,y_2,\cdots,x_r,y_r) \\
&=& \inf_{x_1-y_1=z_1,\cdots,x_r-y_r=z_r}I^{c_1,c_1-,\cdots,c_r,c_r-}(y_1,x_1,x_2,y_2,\cdots,x_r,y_r)-
\rho^{c_1}(x_1-y_1) \\
&=& \inf_{y_1-x_1=-z_1,x_2-y_2=z_2,\cdots,x_r-y_r=z_r} I^{c_1,c_1-,\cdots,c_r,c_r-}(y_1,x_1,x_2,y_2,\cdots,x_r,y_r)-\rho^{c_1}z_1 \\
&=& I^{c_1,\cdots,c_r}_K(-z_1,z_2,\cdots,z_r)-\rho^{c_1}z_1,
\end{eqnarray*}
which gives the desired result.
\end{proof}

\subsection{Applications in biochemistry}\label{biochemistry}
One of the most important branch of biochemistry is enzyme kinetics, which studies chemical reactions that are catalyzed by enzymes. Recently, it has been made possible to study enzyme kinetics at the single-molecule level, in which case the concept of concentration makes no sense and the behavior of enzymes must be studied in a single-molecule way.

Let us consider the following three-step reversible Michaelis-Menten enzyme kinetics \cite{beard2008chemical, ge2008waiting, ge2012stochastic}:
\begin{equation}\label{Michaelis}
E+S\;\autorightleftharpoons{}{}\;ES\;\autorightleftharpoons{}{}\;EP\autorightleftharpoons{}{}\;E+P,
\end{equation}
where $E$ is an enzyme involved in converting the substrate $S$ into the product $P$. If there is only one enzyme molecule, it may transition stochastically among three states: the free enzyme $E$, the enzyme-substrate complex $ES$, and the enzyme-product complex $EP$. From the perspective of a single enzyme molecule, the Michaelis-Menten enzyme kinetics \eqref{Michaelis} can be modeled by the three-state Markov chain illustrated in Figure \ref{enzyme}(a). However, single-substrate enzymes are actually rather rare in biochemistry \cite{ge2012multivariable}. If the enzyme $E$ can catalyze multiple chemical reactions simultaneously with substrates $S_1,S_2,\cdots,S_n$ and products $P_1,P_2,\cdots,P_n$, then the topology of the Markov chain model will contain multiple cycles passing through a common state $E$, as illustrated in Figure \ref{enzyme}(b).
\begin{figure}[!htb]
\begin{center}
\centerline{\includegraphics[width=0.8\textwidth]{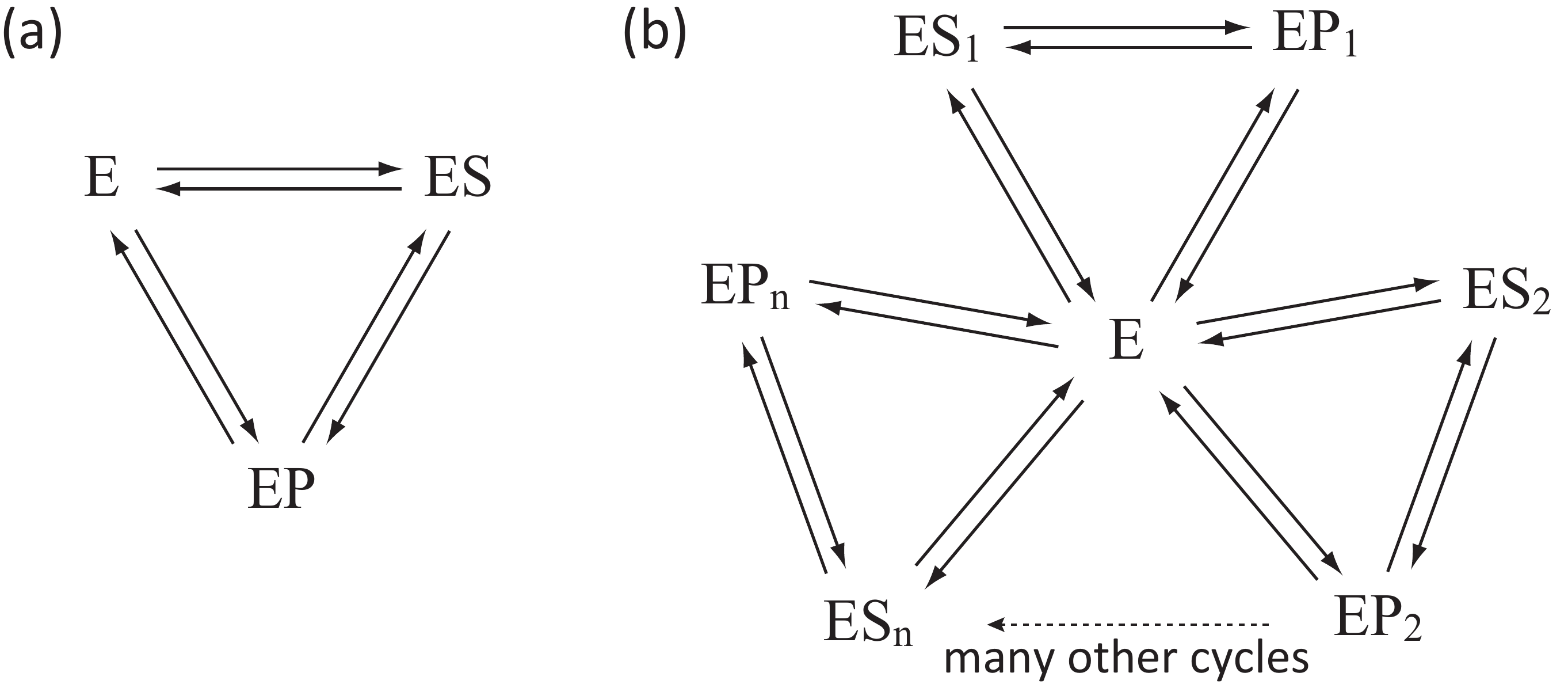}}
\caption{\textbf{Markov chain models of enzyme kinetics.} (a) The Markov chain model of single-substrate enzyme kinetics. (b) The Markov chain model of multiple-substrate enzyme kinetics.}\label{enzyme}
\end{center}
\end{figure}

We assume that the Markov chain illustrated in Figure \ref{enzyme}(b) starts from state $E$. If the Markov chain forms a clockwise cycle $c_k=(E,ES_k,EP_k)$, then the substrate $S_k$ is converted into the product $P_k$ for one time. Similarly, if the Markov chain forms a counterclockwise cycle $c_k-=(E,EP_k,ES_k)$, then the product $P_k$ is converted into the substrate $S_k$ for one time. Thus the empirical net circulation $K^{c_k}_t$ of cycle $c_k$ represents the net number of conversions from the substrate $S_k$ into the product $P_k$ per unit time and the quantity
\begin{equation}
W^{c_k}_t = K^{c_k}_t\rho^{c_k}
\end{equation}
represents the fluctuating chemical work done along cycle $c_k$ \cite{qian2006generalized, ge2012stochastic, ge2012multivariable}, where $\rho^{c_k}$ is the affinity of cycle $c_k$ (see Definition \ref{affinity}). In fact, the results of this paper can be directly applied to establish the multivariate fluctuation theorems of the empirical net circulations of cycles $c_1,c_2,\cdots,c_n$ and the fluctuating chemical works done along cycles $c_1,c_2,\cdots,c_n$. This shows that our work would have a board application prospect in biochemistry.

\section*{Acknowledgements}
The authors gratefully acknowledge Prof. Hao Ge at Peking University for stimulating discussions and gratefully acknowledge financial support from NSFC 11271029 and NSFC 11171024. The first author also acknowledges financial support from the Academic Award for Young Ph.D. Researchers granted by the Ministry of Education of China.

%%%%%%%%%% 参考文献格式 %%%%%%%%%%
\setlength{\bibsep}{5pt}
\small\bibliographystyle{unsrt}
\bibliography{Haldane}

\begin{thebibliography}{10}

\bibitem{qian1979decomposition}
MP~Qian and Min Qian.
\newblock The decomposition into a detailed balance part and a circulation part
  of an irreversible stationary markov chain.
\newblock {\em Scientia Sinica, Special Issue (II)}, 69, 1979.

\bibitem{qian1981markov}
C~Qian, Min Qian, and MP~Qian.
\newblock Markov chain as a model of hill's theory on circulation.
\newblock {\em Scientia Sinica}, 24(10):1431--1448, 1981.

\bibitem{minping1982circulation}
Qian Minping, Qian Min, and Qian Cheng.
\newblock Circulation distribution of a markov chain.
\newblock {\em Scientia Sinica A}, 25:31--40, 1982.

\bibitem{qian1982circulation}
Qian Minping and Qian Min.
\newblock Circulation for recurrent markov chains.
\newblock {\em Probability Theory and Related Fields}, 59(2):203--210, 1982.

\bibitem{qian1984circulations}
MP~Qian, M~Qian, and C~Qian.
\newblock Circulations of markov-chains with continuous-time and the
  probability interpretation of some determinants.
\newblock {\em SCIENTIA SINICA SERIES A-MATHEMATICAL PHYSICAL ASTRONOMICAL \&
  TECHNICAL SCIENCES}, 27(5):470--481, 1984.

\bibitem{kalpazidou1990asymptotic}
S~Kalpazidou.
\newblock Asymptotic behaviour of sample weighted circuits representing
  recurrent markov chains.
\newblock {\em Journal of applied probability}, pages 545--556, 1990.

\bibitem{qian1991reversibility}
MP~Qian, Min Qian, and GL~Gong.
\newblock The reversibility and the entropy production of markov processes.
\newblock {\em Contemp. Math}, 118:255--261, 1991.

\bibitem{jiang2004mathematical}
Da-Quan Jiang and Min Qian.
\newblock {\em Mathematical theory of nonequilibrium steady states: on the
  frontier of probability and dynamical systems}.
\newblock Number 1833. Springer, 2004.

\bibitem{kalpazidou2007cycle}
Sophia~L Kalpazidou.
\newblock {\em Cycle representations of Markov processes}, volume~28.
\newblock Springer, 2007.

\bibitem{zhang2012stochastic}
Xue-Juan Zhang, Hong Qian, and Min Qian.
\newblock Stochastic theory of nonequilibrium steady states and its
  applications. part i.
\newblock {\em Physics Reports}, 510(1):1--86, 2012.

\bibitem{ge2012stochastic}
Hao Ge, Min Qian, and Hong Qian.
\newblock Stochastic theory of nonequilibrium steady states. part ii:
  Applications in chemical biophysics.
\newblock {\em Physics Reports}, 510(3):87--118, 2012.

\bibitem{kolmogoroff1936theorie}
Andrei Kolmogoroff.
\newblock Zur theorie der markoffschen ketten.
\newblock {\em Mathematische Annalen}, 112(1):155--160, 1936.

\bibitem{hill2012free}
Terrell Hill.
\newblock {\em Free energy transduction in biology: the steady-state kinetic
  and thermodynamic formalism}.
\newblock Elsevier, 2012.

\bibitem{hill2013free}
Terrell~L Hill.
\newblock {\em Free Energy Transduction and Biochemical Cycle Kinetics}.
\newblock Courier Dover Publications, 2013.

\bibitem{qian2006generalized}
Hong Qian and X~Sunney Xie.
\newblock Generalized haldane equation and fluctuation theorem in the
  steady-state cycle kinetics of single enzymes.
\newblock {\em Physical Review E}, 74(1):010902, 2006.

\bibitem{ge2008waiting}
Hao Ge.
\newblock Waiting cycle times and generalized haldane equality in the
  steady-state cycle kinetics of single enzymes.
\newblock {\em The Journal of Physical Chemistry B}, 112(1):61--70, 2008.

\bibitem{ge2012multivariable}
Hao Ge.
\newblock Multivariable fluctuation theorems in the steady-state cycle kinetics
  of single enzyme with competing substrates.
\newblock {\em Journal of Physics A: Mathematical and Theoretical},
  45(21):215002, 2012.

\bibitem{seifert2012stochastic}
Udo Seifert.
\newblock Stochastic thermodynamics, fluctuation theorems and molecular
  machines.
\newblock {\em Reports on Progress in Physics}, 75(12):126001, 2012.

\bibitem{evans1993probability}
Denis~J Evans, EGD Cohen, and GP~Morriss.
\newblock Probability of second law violations in shearing steady states.
\newblock {\em Physical Review Letters}, 71(15):2401, 1993.

\bibitem{gallavotti1995dynamical}
Giovanni Gallavotti and EGD Cohen.
\newblock Dynamical ensembles in stationary states.
\newblock {\em Journal of Statistical Physics}, 80(5-6):931--970, 1995.

\bibitem{lebowitz1999gallavotti}
Joel~L Lebowitz and Herbert Spohn.
\newblock A gallavotti--cohen-type symmetry in the large deviation functional
  for stochastic dynamics.
\newblock {\em Journal of Statistical Physics}, 95(1-2):333--365, 1999.

\bibitem{jiang2003entropy}
Da-Quan Jiang, Min Qian, and Fu-Xi Zhang.
\newblock Entropy production fluctuations of finite markov chains.
\newblock {\em Journal of Mathematical Physics}, 44(9):4176--4188, 2003.

\bibitem{seifert2005entropy}
Udo Seifert.
\newblock Entropy production along a stochastic trajectory and an integral
  fluctuation theorem.
\newblock {\em Physical review letters}, 95(4):040602, 2005.

\bibitem{chung1967markov}
Kai~Lai Chung.
\newblock {\em Markov chains}.
\newblock Springer, 1967.

\bibitem{mariani2012large}
Mauro Mariani, Yuhao Shen, and Lorenzo Zambotti.
\newblock Large deviations for the empirical measure of markov renewal
  processes.
\newblock {\em arXiv preprint arXiv:1203.5930}, 2012.

\bibitem{varadhan1984large}
SR~Srinivasa Varadhan, SR~Srinivasa Varadhan, and SR~Srinivasa Varadhan.
\newblock {\em Large deviations and applications}, volume~46.
\newblock SIAM, 1984.

\bibitem{dembo1998large}
Amir Dembo and Ofer Zeitouni.
\newblock {\em Large deviations techniques and applications}, volume~2.
\newblock Springer, 1998.

\bibitem{borwein2010convex}
Jonathan~M Borwein and Adrian~S Lewis.
\newblock {\em Convex analysis and nonlinear optimization: theory and
  examples}, volume~3.
\newblock Springer, 2010.

\bibitem{beard2008chemical}
Daniel~A Beard and Hong Qian.
\newblock {\em Chemical biophysics: quantitative analysis of cellular systems}.
\newblock Cambridge University Press, 2008.

\end{thebibliography}

\end{document}